\newtheorem{theorem}{Theorem}[section]
\newtheorem{corollary}[theorem]{Corollary}
\newtheorem{lemma}[theorem]{Lemma}
\theoremstyle{definition}
\newtheorem{example}[theorem]{Example}
\newtheorem{remark}[theorem]{Remark}
\newtheorem{problem}[theorem]{Problem}
\newcommand{\GL}{\mathrm{GL}}
\newcommand{\AGL}{\mathrm{AGL}}
\newcommand{\PSL}{\mathrm{PSL}}
\newcommand{\PSiL}{\mathrm{P\Sigma L}}
\newcommand{\SL}{\mathrm{SL}}
\newcommand{\PGL}{\mathrm{PGL}}
\newcommand{\PG}{\mathrm{PG}}
\newcommand{\Sp}{\mathrm{Sp}}
\newcommand{\Suz}{\mathrm{Suz}}
\newcommand{\soc}{\mathrm{soc}}
\newcommand{\PSU}{\mathrm{PSU}}
\newcommand{\Ree}{\mathrm{Ree}}
\newcommand{\Sym}{\mathrm{Sym}}
\newcommand{\mat}{\mathrm{M}}
\newcommand{\M}{\mathcal{M}}
\newcommand{\U}{\mathcal{U}}
\newcommand{\HS}{\mathrm{HS}}
\newcommand{\Co}{\mathrm{Co}}
\newcommand{\ch}{\mathrm{char}}
\DeclareMathOperator{\lindim}{LinDim}
\newcommand{\F}{\mathbb F}
\title{Linear dimension of group actions}
\author{Alice Devillers}
\address[Devillers, Giudici, Morgan]{Department of Mathematics and Statistics, The University of Western Australia, Perth WA 6009, Australia}
\email{alice.devillers@uwa.edu.au, michael.giudici@uwa.edu.au}
\email{luke.morgan@uwa.edu.au}
\author{Michael Giudici}
\author{Daniel R. Hawtin}
\address[Hawtin]{Faculty of Mathematics, University of Rijeka, Rijeka 51000, Croatia}
\email{dhawtin@math.uniri.hr}
\author{Lukas Klawuhn}
\address[Klawuhn]{Department of Mathematics, Paderborn University, Warburger Str.\ 100, 33098 Paderborn, Germany.}
\email{klawuhn@math.upb.de}
\author{Luke Morgan}
\begin{document}

\begin{abstract}
Two fundamental ways to represent a group are as permutations and as matrices. 
In this paper, we study linear representations of  groups that intertwine with a permutation representation. 
Recently, D'Alconzo and Di Scala \cite{DAlconzo2024} investigated how small the matrices in such a linear representation can be. The minimal dimension of such a representation is the \emph{linear dimension of the group action} and this has applications in cryptography and cryptosystems.

We develop the idea of linear dimension from an algebraic point of view by using the theory of permutation modules. We give structural results about representations of minimal dimension and investigate the implications of faithfulness, transitivity and primitivity on the linear dimension. Furthermore, we compute the linear dimension of several classes of finite primitive permutation groups. We also study wreath products, allowing us to determine the linear dimension of imprimitive group actions. Finally, we give the linear dimension of almost simple finite $2$-transitive groups, some of which 
may be used for further applications in cryptography. Our results also open up many new questions  about   linear representations of group actions.
\end{abstract}

 \maketitle

\section{Introduction}
 D'Alconzo and Di Scala \cite{DAlconzo2024} recently introduced the concept of a group action representation and the linear dimension of a group action. This is motivated by an application to cryptography and to determine how susceptible to attack cryptosystems based on group actions are.  In this paper, we develop the general theory of group action representations and  compute the linear dimensions of some well-known group actions.

 Let $G$ be a group acting on a set $\Omega$, let $F$ be a field and $V$ be a vector space over~$F$. A pair $(\rho,\varphi)$ is a \emph{representation of the group action over $F$} if $\rho:G\rightarrow \GL(V)$ is a homomorphism and $\varphi:\Omega\rightarrow V$ is injective such that 
\begin{equation}\label{intertwin}
(\omega^g)\varphi= \left(\omega\varphi\right)^{g\rho}
\end{equation}
for all $\omega\in\Omega$ and $g\in G$. We refer to (\ref{intertwin}) as the \emph{intertwining property}. We say that $V$ \emph{affords} the representation $(\rho,\varphi)$. Note that we follow the notation of Wielandt \cite{Wielandt} and so $\omega^g$ denotes the image of $\omega$ under the element $g$. We also write our functions on the right. 

Given a group $G$ acting on sets $\Omega_1$ and $\Omega_2$, we say that the two actions are \emph{equivalent} if there is a 
bijection $\tau:\Omega_1\rightarrow \Omega_2$ such that $(\omega^g)\tau=\left(\omega \tau\right)^g$. Note that if we have a homomorphism $\rho:G\rightarrow\GL(V)$ for some vector space $V$, then we obtain a natural action of $G$ on $V$ by $v^g:=v^{g\rho}$.  This will give rise to a representation of the group action of $G$ on $\Omega$ over $F$ precisely when we can find a set $X$ of vectors in $V$ such that the action of $G$ on $\Omega$ is equivalent to the action of $G$ on $X$. Note that we will often write $v^g$ instead of $v^{g\rho}$ when the context is clear.

We define the \emph{$F$-linear dimension} of the action of $G$ on $\Omega$ to be 
\begin{align*}
    \lindim_F(G,\Omega) = & \min  \left \{\  \dim(V) {\quad  \left   | \quad \begin{array}{c}
     V  \textrm{ affords a representation} \\ \textrm{ of the action of }G  \text{ on } \Omega \text{ over }F \end{array} \right . } \right \}.
\end{align*}
 When $|F|=q$ (a prime power), we denote $\lindim_F(G,\Omega)$ by $\lindim_q(G,\Omega)$ and this is called the \emph{$q$-linear dimension} by D'Alconzo and Di Scala. If $V$ affords a representation of the action of $G$ on $\Omega$ over $F$ and $\dim(V)=\lindim_F(G,\Omega)$, then we call $V$ a \emph{witness} of $\lindim_F(G,\Omega)$. Note that witnesses need not be unique, see Example \ref{eg:twowitnesses} where we give two witnesses of $\lindim_2(C_6,6)=4$.

 D'Alconzo and Di Scala \cite{DAlconzo2024} determined $\lindim_q(C_n,\{1,2,\ldots,n\})$, for $C_n$ a cyclic group of order $n$, and showed that for $n>2$ and any prime power $q$ we have $\lindim_q(S_n,\{1,2,\ldots,n\})=n-1$. Note that this is not always the smallest dimension of a faithful irreducible representation of $S_n$, which is $n-2$ when the characteristic of the field divides $n$. Moreover, the witness is not irreducible, see Section \ref{sec:witness} for more details.

Given an action of a group $G$ on a set $\Omega$, the \emph{permutation module}  over $F$ is the vector space $F^\Omega$ with basis $\{e_\omega\mid \omega\in\Omega\}$ such that $(e_\omega)^g=e_{\omega^g}$. Note that $F^\Omega$ affords a representation of the action of $G$ on $\Omega$, where $\varphi$ maps each $\omega\in\Omega$ to the basis vector $e_\omega \in F^\Omega$. Thus, as observed by D'Alconzo and Di Scala \cite[Remark 10]{DAlconzo2024}, we have that $\lindim_F(G,\Omega)\leqslant |\Omega|$. 

One of our first important observations is that a witness of $\lindim_F(G,\Omega)$ is a quotient of $F^\Omega$ (Corollary \ref{cor:isquo}) and this leads to the following theorem. The action of $G$ on $\Omega$ is called \emph{primitive} if the only partitions of $\Omega$ preserved by $G$ are the trivial partitions, namely, the one with only one part, and the one where all parts are singletons.

\begin{theorem}\label{thm:prim}
Let $G$ act primitively on $\Omega$ such that the induced permutation group $G^{\Omega}$ is not cyclic. Then $\lindim_F(G,\Omega)$ is the smallest codimension of a submodule of the permutation module~$F^\Omega$ that is not a hyperplane. Moreover, if a witness for $\lindim_F(G,\Omega)$ is reducible then it has a unique non-zero proper submodule $W$ and $W$ has codimension one.
\end{theorem}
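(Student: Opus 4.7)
The plan is to exploit the fact (from Corollary \ref{cor:isquo}) that every witness $V$ is a quotient $F^\Omega/U$ with $\varphi$ induced by $\omega \mapsto e_\omega + U$. Under this identification, $\lindim_F(G,\Omega)$ equals the minimum of $\mathrm{codim}(U)$ taken over those $G$-submodules $U$ for which the induced map $\varphi$ is injective, so the heart of the argument is to describe exactly which $U$ produce an injective $\varphi$.

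For this, I would first observe that the relation $\omega \sim \omega'$ defined by $e_\omega - e_{\omega'} \in U$ is an equivalence relation on $\Omega$, and that it is $G$-invariant because $U$ is a $G$-submodule. Primitivity then forces $\sim$ to be either the identity or the one-block partition. The one-block case is equivalent to $U$ containing every difference $e_\omega - e_{\omega'}$, that is, to $U \supseteq U_0$, where $U_0 = \{\sum c_\omega e_\omega : \sum c_\omega = 0\}$ is the $G$-invariant augmentation submodule of codimension $1$. Hence $\varphi$ is injective if and only if $U \not\supseteq U_0$.

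The most delicate step, and the main obstacle, is showing that no hyperplane can yield an injective $\varphi$ when $G^\Omega$ is not cyclic. If $U$ is a hyperplane, then $F^\Omega/U$ is one-dimensional and $G$ acts on it by a character $\chi \colon G \to F^\times$. Transitivity together with $G$-invariance of $U$ gives that $e_{\omega_0} \notin U$ (else every $e_\omega \in U$ and $U = F^\Omega$), so the stabilizer of $e_{\omega_0} + U$ in $G$ equals $\ker \chi$. Injectivity of $\varphi$ then equates this stabilizer with $G_{\omega_0}$, so $G_{\omega_0} = \ker \chi$ is normal in $G$; transitivity forces all point stabilizers to coincide with $G_{\omega_0}$, which therefore equals the kernel of the action, and $G^\Omega = G/G_{\omega_0} \cong \chi(G) \leqslant F^\times$ is cyclic, contrary to hypothesis. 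Combined with the previous paragraph, the valid submodules are exactly those of codimension at least $2$, which proves the first claim.

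For the ``moreover'' part, let $V = F^\Omega/U$ be a reducible witness and let $W' \subsetneq V$ be a non-zero proper submodule, corresponding to some $U \subsetneq U' \subsetneq F^\Omega$. Then $F^\Omega/U'$ has dimension strictly smaller than $\lindim_F(G,\Omega)$, so by the characterization above it cannot be a witness; this forces $U'$ to be a hyperplane with $U' \supseteq U_0$, whence $U' = U_0$. Since $U_0$ is uniquely determined, $W = U_0/U$ is the unique proper non-zero submodule of $V$, and its codimension in $V$ is $\dim(F^\Omega) - \dim(U_0) = 1$.
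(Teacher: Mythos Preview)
Your proof is correct and follows essentially the same line as the paper: the $G$-congruence argument (Theorem~\ref{thm:quotient}) together with primitivity to characterise when $\overline{\varphi}$ is injective, and the character argument of Lemma~\ref{lem:cyclic} to exclude hyperplanes. One small difference worth noting is in the ``moreover'' clause: having explicitly characterised injectivity by $U\not\supseteq C^\perp$, you immediately conclude that any intermediate submodule $U'$ must equal $C^\perp$, whereas the paper's Corollary~\ref{cor:almost irred} instead argues uniqueness by showing that two distinct codimension-one submodules above $U$ would force $\dim V=2$ and $G^\Omega$ abelian, recovering the identification $W=C^\perp/U$ only later in Lemma~\ref{lem: kernel on soc(V)}.
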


Theorem \ref{thm:prim} follows from Corollaries \ref{cor:smallestcodim} and \ref{cor:almost irred}. We also observe that Theoreom \ref{thm:prim} does not hold for imprimitive groups as demonstrated by Examples \ref{eg:S4on6} and \ref{eg:cp wr sd}.

Primitive groups arise as the natural class of actions to study. We see in Lemma~\ref{lem:intransLWRBound} that the linear dimension of an intransitive action can be bounded above and below in terms of the linear dimensions of the actions on each orbit. Moreover, we give a sufficient condition for when this upper bound is sharp (Theorem \ref{thm:intrans=}). Thus, it is natural to reduce the study of the linear dimension of a group action to the transitive case. Moreover, an imprimitive permutation group naturally embeds in a wreath product of a primitive permutation group with a transitive group (as we outline in Section \ref{sec:imprim}) and we are able to determine the linear dimension of  wreath products in their imprimitive actions. Note that if $H\leqslant G$, then $\lindim_F(H,\Omega)\leqslant \lindim_F(G,\Omega)$ (Lemma~\ref{lem: h leq g}).

\begin{theorem}
\label{thm:imprimitive theorem}
Let $K\leqslant \Sym(\Delta)$ be   primitive, let $L \leqslant \Sym(\Sigma)$, and let $G= K \wr L$ acting imprimitively on the set $\Omega=\Delta \times \Sigma$. Let $k=|\Delta|$ and $\ell=|\Sigma|$. Then
$$\lindim_F(G, \Omega) = \begin{cases}
     \ell+1 & \text{if }  $K$ \text{ is regular and }k =\mathrm{char}(F), \\
    \ell \lindim_F(K,\Delta) & \text{otherwise. }
\end{cases}$$
\end{theorem}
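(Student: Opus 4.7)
The plan is to establish matching upper and lower bounds; the dichotomy reflects exactly when the base group $K^{\ell}$ is forced to fix a non-zero subspace in any witness. For the upper bound I would give two explicit constructions. In the generic case, take a witness $V_0$ for $\lindim_F(K,\Delta)$ with intertwining map $\varphi_0\colon\Delta\to V_0$, set $V:=V_0^{\ell}$, let the $i$-th factor of the base group act on the $i$-th summand and $L$ permute the summands, and send $(\delta,i)\mapsto(0,\dots,\varphi_0(\delta),\dots,0)$ with $\varphi_0(\delta)$ in position $i$; this realises dimension $\ell\lindim_F(K,\Delta)$. In the special case $K\cong C_p$ regular with $p=\ch(F)$, identify $\Delta$ with $\F_p$, take $V:=F\oplus F^{\Sigma}$ with basis $\{e_0\}\cup\{e_i:i\in\Sigma\}$, let the $i$-th copy of $K$ act by $e_i\mapsto e_i+e_0$ (fixing the remaining basis vectors) and let $L$ fix $e_0$ while permuting the $e_i$; then $(j,i)\mapsto j e_0+e_i$ intertwines with total dimension $\ell+1$.

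For the lower bound, let $V$ afford the action and set $U_i:=\langle\varphi(\Delta\times\{i\})\rangle$. Writing $K_i$ for the $i$-th factor of $K^{\ell}$, each $U_i$ is $K_i$-invariant and is fixed pointwise by every $K_j$ with $j\neq i$, so $U_i\cap V^{K^{\ell}}=U_i^{K_i}$. Passing to $\bar V:=V/V^{K^{\ell}}$ and $\bar U_i:=U_i/U_i^{K_i}$, a short calculation (apply any element of $K_i$ to a representative of $\bar U_i\cap\sum_{j\neq i}\bar U_j$ and use that $K_i$ fixes every $U_j$ with $j\neq i$ pointwise) forces the representative into $U_i^{K_i}$, so the $\bar U_i$ are in internal direct sum in $\bar V$ and
\[
\dim V\;\geqslant\;\dim V^{K^{\ell}}+\sum_{i\in\Sigma}\dim\bar U_i.
\]

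The crucial step is to show that $\bar U_i$ itself affords the $K$-action on $\Delta$ (so $\dim\bar U_i\geqslant\lindim_F(K,\Delta)$) unless we are in the special case. The relation on $\Delta$ defined by $\delta\sim\delta'\iff\varphi(\delta,i)-\varphi(\delta',i)\in U_i^{K_i}$ is $K$-invariant, so by primitivity its classes are singletons or all of $\Delta$. In the singleton case the images of $\Delta\times\{i\}$ are distinct in $\bar U_i$ and the conclusion is immediate. In the one-class case $\varphi(\Delta\times\{i\})$ lies in a single coset of $U_i^{K_i}$, and $\chi(k):=\varphi(\delta_0^k,i)-\varphi(\delta_0,i)$ is readily checked to be an additive homomorphism $\chi\colon K\to U_i^{K_i}$ with $K_{\delta_0}\subseteq\ker\chi$ and $|\chi(K)|=|\Delta|$ (by transitivity of $K$ on $\Delta$ and injectivity of $\varphi$); counting orders forces $\ker\chi=K_{\delta_0}$, so $K_{\delta_0}\lhd K$. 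Faithfulness and transitivity of $K$ then imply $K_{\delta_0}=1$, so $K$ is regular and, being primitive, $K\cong C_p$; the resulting injection $C_p\hookrightarrow(U_i^{K_i},+)$ then forces $p=\ch(F)$.

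Putting everything together: in the generic case each $\dim\bar U_i\geqslant\lindim_F(K,\Delta)$, yielding $\dim V\geqslant\ell\lindim_F(K,\Delta)$; in the special case $K^{\ell}$ is a non-trivial $p$-group acting on the non-zero $F$-space $V$ in characteristic $p$, so the standard fixed-point argument gives $\dim V^{K^{\ell}}\geqslant 1$, while each $\bar U_i\neq 0$ because $K$ acts non-trivially on $\varphi(\Delta\times\{i\})$, giving $\dim V\geqslant 1+\ell$. The main obstacle I anticipate is the one-class analysis above: producing $\chi$, identifying its kernel with the point stabiliser via the order count, and converting normality plus faithfulness plus primitivity into the precise conclusion $K\cong C_p$ with $p=\ch(F)$ is the delicate algebraic step that localises the exception to exactly the stated special case.
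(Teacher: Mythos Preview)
Your argument is correct, and it follows a genuinely different line from the paper's proof. The paper splits into cases early: for $K$ not regular it invokes Theorem~\ref{thm:intrans=} on the intransitive subgroup $K^{\ell}$ (which requires distinct point stabilisers), while for $K\cong C_k$ regular it performs explicit linear-independence calculations on the vectors $(0,i)\varphi$, uses Maschke's theorem when $k\neq\mathrm{char}(F)$, and a polynomial-root argument to squeeze out the extra dimension when $k=\mathrm{char}(F)$. Your proof is unified: you pass to $\bar V=V/V^{K^{\ell}}$, show the images $\bar U_i$ are in internal direct sum (a short fixed-point computation), and then use primitivity of $K$ on $\Delta$ to analyse the $K$-congruence $\delta\sim\delta'\Leftrightarrow\varphi(\delta,i)-\varphi(\delta',i)\in U_i^{K_i}$. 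The one-class branch, via your homomorphism $\chi\colon K\to (U_i^{K_i},+)$ with $\ker\chi=K_{\delta_0}\lhd K$, isolates the special case $K\cong C_p$, $p=\mathrm{char}(F)$ intrinsically, and the final $\dim V^{K^{\ell}}\geqslant 1$ comes from the standard fixed-point fact for $p$-groups in characteristic $p$ rather than the paper's explicit manipulation. What your route buys is a uniform treatment that avoids Maschke's theorem and explains \emph{why} the exception occurs; what the paper's buys is modular reuse of the intransitive machinery (Theorem~\ref{thm:intrans=}) and a more elementary, hands-on verification. Your identification of the $\chi$ step as the delicate point is accurate, and your verification that $\chi(kk')=\chi(k)+\chi(k')$ (which follows from $\chi(k)\in U_i^{K_i}$ being $K_i$-fixed together with the intertwining property) is exactly what makes it go through.
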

    
Finite primitive permutation groups are described by the O'Nan-Scott Theorem, see for example \cite[Section 4]{cameron1999permutation}. One major class of primitive groups consists of wreath products in product action. For $K\leqslant \Sym(\Delta)$ and $L\leqslant S_\ell$, the wreath product $K\wr L$ has a natural action on the Cartesian power $\Delta^\ell$ (see Section \ref{sec:prodaction} for more details). Lemma \ref{lem: upper bound for product action} gives the natural upper bound $\lindim_F(K\wr L,\Delta^\ell)\leqslant \ell\lindim_F(K,\Delta)$ and we are able to give the exact values when $K=\Sym(\Delta)$ and $L=S_\ell$.

\begin{theorem}\label{thm:prodaction}
Let $\Delta$ be a finite set of cardinality $k\geqslant 3$ and let $\ell\geqslant 2$. Suppose that $\ch(F)=p\geqslant 0$.
Then $$\lindim_F(S_k\wr S_\ell,\Delta^\ell)=
\begin{cases} 
(k-2)\ell+1 &\text{if }p\mid k, \\
(k-1)\ell & \text{if }p \nmid k.
\end{cases}$$
\end{theorem}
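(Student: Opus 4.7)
My plan is to invoke Theorem~\ref{thm:prim}: since the product action of $G=S_k\wr S_\ell$ on $\Delta^\ell$ is primitive and $G$ is non-cyclic (as $k\geq 3$), the linear dimension equals the smallest codimension $\geq 2$ of a $G$-submodule of $F^{\Delta^\ell}\cong(F^\Delta)^{\otimes\ell}$. The analysis rests on the three-term filtration $0\subset\langle\mathbf{1}\rangle\subset Z\subset F^\Delta$, where $\mathbf{1}=\sum_{\delta\in\Delta}e_\delta$ and $Z$ is the augmentation submodule; if $p\nmid k$ this splits as $F^\Delta=\langle\mathbf{1}\rangle\oplus Z$ with $Z$ irreducible, and if $p\mid k$ it is the full radical filtration with simple layers $F$, $L:=Z/\langle\mathbf{1}\rangle$ (of dimension $k-2$) and $F$ from top to bottom.

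For the upper bound I would consider the $G$-equivariant linear map $\beta\colon F^{\Delta^\ell}\to V\oplus V\oplus\cdots\oplus V$ ($\ell$ copies), where $V=F^\Delta/\langle\mathbf{1}\rangle$ and the target carries the wreath-product action ($S_k$ on its own summand and $S_\ell$ permuting the summands), defined on basis vectors by $\beta(e_\omega)=(e_{\omega_1}+\langle\mathbf{1}\rangle,\ldots,e_{\omega_\ell}+\langle\mathbf{1}\rangle)$. The image $W:=\beta(F^{\Delta^\ell})$ is a $G$-submodule that affords the action. Every difference $\beta(e_\omega)-\beta(e_{\omega'})$ lies in the socle $N:=L_1\oplus\cdots\oplus L_\ell$ of the target (where $L_j$ is the copy of $L$ in the $j$-th summand), while $\beta(e_{\omega_0})$ for any fixed $\omega_0$ has nonzero image in the trivial-quotient direction $(V/L)\oplus\cdots\oplus(V/L)$. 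So the image is $N+F\cdot\beta(e_{\omega_0})$: when $p\mid k$ this has dimension $\ell(k-2)+1$, and when $p\nmid k$ one has $L=V$ so $N$ is the entire target and $W$ has dimension $\ell(k-1)$, matching the product-action upper bound.

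For the lower bound I would take any $G$-submodule $U\subset F^{\Delta^\ell}$ of codimension $\geq 2$ and analyse the quotient $Q=F^{\Delta^\ell}/U$. A Frobenius reciprocity computation at the point stabiliser $S_{k-1}\wr S_\ell$ shows that the only one-dimensional $G$-quotient of $F^{\Delta^\ell}$ is the trivial one, so when $p\mid k$ the permutation module has a unique maximal $G$-submodule, namely the augmentation submodule $R$, and $U\subset R$. Hence $R/U$ is a nonzero quotient of $R$, and one shows that the head of $R$ as a $G$-module is precisely the irreducible $N=L_1\oplus\cdots\oplus L_\ell$ of dimension $\ell(k-2)$, so $R/U\twoheadrightarrow N$ and therefore $\dim Q\geq 1+\ell(k-2)$. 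In the case $p\nmid k$ the module $F^{\Delta^\ell}$ decomposes as a direct sum of isotypic pieces $\mathcal{I}_i$ of dimension $\binom{\ell}{i}(k-1)^i$ for $0\leq i\leq\ell$, and checking that the $\mathcal{I}_1$-component of $e_\omega-e_{\omega'}$ is nonzero precisely when $\omega\neq\omega'$ forces any witness-giving quotient to retain the irreducible constituent $\mathcal{I}_1$ and hence to have dimension at least $\ell(k-1)$.

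The main difficulty will be identifying the head of $R$ as a $G$-module in the case $p\mid k$. This combines two ingredients: the compatibility of the Jacobson radical with the tensor product at the $S_k^\ell$-level, which via the tensor-product layer formula shows that the second radical layer of $(F^\Delta)^{\otimes\ell}$ equals $L_1\oplus\cdots\oplus L_\ell$; and Clifford theory for the normal inclusion $S_k^\ell\triangleleft G$, which shows that the distinct $S_k^\ell$-irreducibles $L_j$ are permuted transitively by $S_\ell$ so their direct sum is a single irreducible $G$-module, and that the $G$-head of $R$ coincides with its $S_k^\ell$-head because every simple $G$-module restricts semisimply to $S_k^\ell$.
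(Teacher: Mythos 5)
Your strategy is sound and reaches the correct answer, but it diverges from the paper's proof in one substantive place, and two of your stated justifications need repair. The $p\nmid k$ case is essentially the paper's argument: the paper decomposes $F^{\Delta^\ell}\cong(F^\Delta)^{\otimes\ell}$ into summands $W_m$ of dimension $\binom{\ell}{m}(k-1)^m$, proves each is irreducible as an $F(S_k\wr S_\ell)$-module (irreducibility over $S_k^\ell$, distinct kernels, transitivity of $S_\ell$ on $m$-subsets) and that they are pairwise non-isomorphic, then applies Corollary~\ref{cor:smallestcodim}. Having invoked that corollary, your remark about the $\mathcal{I}_1$-component of $e_\omega-e_{\omega'}$ is both unnecessary and not a valid inference as stated (killing $\mathcal{I}_1$ does not kill $e_\omega-e_{\omega'}$, whose other components may be nonzero); what you actually need is that every submodule is a sum of the $\mathcal{I}_i$ and that $\dim\mathcal{I}_m\geqslant\ell(k-1)$ for all $m\geqslant 1$. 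The genuine divergence is the lower bound for $p\mid k$: the paper's Lemma~\ref{lem: lower bound for k^l} shows directly, by hitting a vanishing linear combination with transpositions, that the $(k-2)\ell+1$ images of $(1,\ldots,1)$ and $(1,\ldots,1,j,1,\ldots,1)$ are linearly independent in \emph{any} affording module — an elementary argument valid for every $L\leqslant S_\ell$ and every characteristic. Your replacement via radical layers and Clifford theory is workable but heavier, and one step's justification does not support its conclusion: ruling out nontrivial one-dimensional quotients by Frobenius reciprocity does not show $F^{\Delta^\ell}$ has a unique maximal submodule, since a maximal submodule could have an irreducible quotient of larger dimension. Instead, deduce uniqueness from the head computation you already use: $\mathrm{head}_{S_k^\ell}((F^\Delta)^{\otimes\ell})$ is trivial, and $J(FS_k^\ell)\subseteq J(F[S_k\wr S_\ell])$ forces the $G$-head to be a quotient of the $S_k^\ell$-head, hence trivial. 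With that repair (and with the tensor radical-layer formula justified by the absolute irreducibility of the composition factors of $F^\Delta$), your argument goes through; what it buys is structural information about the witness, while the paper's buys brevity and independence from primitivity and from the choice of $L$.
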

 
The finite primitive groups that are not contained in a wreath product in product action are called \emph{basic} and are
of affine, diagonal or almost simple type \cite[Theorem 4.6]{cameron1999permutation}. 
We consider the largest possible affine groups in Example \ref{eg:agldp}. Almost simple groups have many primitive actions. One of our main results determines the linear dimension for the action of $A_n$ or $S_n$ on the set $\Omega_k$ of all $k$-subsets of $\{1,2,\ldots,n\}$. 

 \begin{theorem}
 \label{thm:Snksets}
    Let $G=A_n$ or $S_n$ with $n\geqslant 10$ and let $F$ be a field of characteristic~$p\geqslant 0$.  Let $1\leqslant k< \frac{n}{2}$. Then
    \[
    \lindim_F(G,\Omega_k) =    \begin{cases} n-2 &\textrm{ if }p\mid n \textrm{ and } p\mid k,  \\
  n-1 &\text{ otherwise.}\\
\end{cases}
    \]
\end{theorem}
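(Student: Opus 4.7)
The plan is to apply Theorem~\ref{thm:prim}: the group $G$ acts primitively on $\Omega_k$ for $1\leqslant k<n/2$, and $G^{\Omega_k}$ is not cyclic for $n\geqslant 10$, so $\lindim_F(G,\Omega_k)$ equals the smallest dimension at least $2$ of a quotient of the permutation module $M := F^{\Omega_k}$. Let $V = \ker(\epsilon\colon M \to F)$ denote the augmentation kernel, let $G_\lambda$ denote the stabilizer in $G$ of the set $\{1,\dots,k\}\in\Omega_k$, and let $T = \langle \sum_{i=1}^n e_i \rangle \subset F^n$.

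For the upper bound, consider the $G$-equivariant boundary map $\partial \colon M \to F^n$, $e_A \mapsto \sum_{i\in A} e_i$. A direct check shows that $\partial(M) = F^n$ when $p \nmid k$, and that $\partial(M)$ equals the augmentation kernel $V'$ of $F^n$ when $p \mid k$. Composing $\partial$ with the quotient map $F^n \twoheadrightarrow F^n/T$ produces a quotient of $M$ of dimension $n-1$ in general, and of dimension $n-2$ precisely when both $p \mid n$ (so $T \subset V'$) and $p \mid k$ (so $\partial(M) = V'$). This matches the claimed upper bound.

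For the lower bound, the key input is that, for $n \geqslant 10$, the only non-trivial simple $FG$-module of dimension at most $n-2$ with a non-zero $G_\lambda$-fixed vector is $D^{(n-1,1)}$, of dimension $n-1$ if $p \nmid n$ and $n-2$ if $p \mid n$ (other candidates have dimension $\gg n$, and the sign module has no $G_\lambda$-fixed vector for $n-k, k \geqslant 2$). A direct computation of $G_\lambda$-invariants in $F^n$ shows $(D^{(n-1,1)})^{G_\lambda} \neq 0$ precisely when $p \nmid n$, or when $p \mid n$ and $p \mid k$. Combined with $\dim\mathrm{Hom}_{FG}(M, F) = 1$, this forces any quotient $Q$ of $M$ with $\dim Q \geqslant 2$ but below the claimed $\lindim$ to have head $F$; then $Q = M/W$ with $W \subseteq V$ and $\mathrm{rad}(Q) = V/W$ is a non-zero quotient of $V$.

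It remains to bound the smallest simple quotient of $V$. Applying $\mathrm{Hom}_{FG}(-, D)$ to $0 \to V \to M \to F \to 0$ and using Shapiro's lemma $\mathrm{Ext}^1_{FG}(M, D) = H^1(G_\lambda, D)$ yields
\[
0 \to \mathrm{Hom}(F,D) \to \mathrm{Hom}(M,D) \to \mathrm{Hom}(V,D) \to H^1(G,D) \to H^1(G_\lambda, D).
\]
Taking $D = F$ shows $\mathrm{Hom}(V, F) = 0$. For $D = D^{(n-1,1)}$ in the remaining case $p \mid n$, $p \nmid k$, the non-zero class in $H^1(G, D^{(n-1,1)}) \cong F$ is represented by the non-split extension $F^n/T$. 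This extension splits as a $G_\lambda$-module because $F^n = F^{n-k} \oplus F^k$ as $G_\lambda$-modules, and in this case $p$ divides neither $n-k$ nor $k$, so each summand splits off its trivial submodule. Hence the class restricts to zero on $G_\lambda$ and lies in the image of the connecting homomorphism, giving a surjection $V \twoheadrightarrow D^{(n-1,1)}$. In every case $D^{(n-1,1)}$ is the smallest simple quotient of $V$, giving the matching lower bound. The main obstacle is this cohomological bookkeeping, especially in characteristic~$2$ where sign and trivial coincide; the adaptation from $S_n$ to $A_n$ is routine since the $A_n$-stabilizer of a $k$-subset has the same orbits on $\{1,\dots,n\}$ as the $S_n$-stabilizer for $n \geqslant 10$ and $1\leqslant k < n/2$.
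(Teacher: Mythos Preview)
Your argument is essentially correct and follows a route close to the paper's, with some differences in packaging.  Both proofs use the boundary map $\partial$ for the upper bound and both ultimately rest on the Kleidman--Liebeck/James input (Theorem~\ref{thm:mods for Sn}(3)) that for $n\geqslant 10$ the fully deleted permutation module is the unique nontrivial simple $FA_n$-module of minimal dimension.  Where the paper argues directly with witnesses via Corollaries~\ref{cor:quotientPermMod} and~\ref{cor:almost irred}, you instead invoke Theorem~\ref{thm:prim} and analyse quotients of $M=F^{\Omega_k}$ through Frobenius reciprocity and the long exact sequence.  Your computation that $(D^{(n-1,1)})^{G_\lambda}=0$ exactly when $p\mid n$ and $p\nmid k$ is the content of the paper's Lemmas~\ref{lem: fix points of an} and~\ref{lem: n-2 no good for omega_k}.

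A few points deserve tightening.  First, your ``key input'' sentence is self-contradictory as written: you say the only nontrivial simple of dimension at most $n-2$ with a $G_\lambda$-fixed vector is $D^{(n-1,1)}$, then give its dimension as $n-1$ when $p\nmid n$.  What you actually need (and what Theorem~\ref{thm:mods for Sn}(3) gives) is that every nontrivial simple $FA_n$-module has dimension $\geqslant\dim D^{(n-1,1)}$, and that $D^{(n-1,1)}$ is the \emph{unique} one attaining this minimum; uniqueness is what forces $s_1\geqslant\dim D^{(n-1,1)}+1$ once you know $(D^{(n-1,1)})^{G_\lambda}=0$.  Second, the entire cohomological passage establishing a surjection $V\twoheadrightarrow D^{(n-1,1)}$ in the case $p\mid n$, $p\nmid k$ is correct but superfluous: it yields an alternative upper bound $n-1$, which you already have from $\partial$.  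For the lower bound you only need $\mathrm{Hom}(V,F)=0$, and here you should note explicitly that for $G=A_n$ this follows from $H^1(A_n,F)=0$ (perfectness), so that the sign-related complications you allude to never arise once one reduces to $A_n$ via Lemma~\ref{lem: h leq g}, exactly as the paper does.  Finally, the claim $H^1(G,D^{(n-1,1)})\cong F$ is unneeded and not justified; all you use is that the extension $F^n/T$ is non-split over $G$ but split over $G_\lambda$.
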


Finally, we determine the linear dimensions of the almost simple 2-transitive groups.
 
\begin{theorem}\label{thm:2transLindim}
 Let $F$ be a field of characteristic $p\geqslant 0$ and let $G$ be an almost simple $2$-transitive group of degree $n$ acting on $\Omega$. Then either $\lindim_F(G,\Omega)=n-1$, or $\lindim_F(G,\Omega)$ is as in Table~\ref{tab:2transLinDim}. 
\end{theorem}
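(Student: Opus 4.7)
The plan is to combine Theorem~\ref{thm:prim} with the classification of the finite almost simple $2$-transitive groups. Since any $2$-transitive action of degree at least~$3$ is primitive and has non-cyclic image (cyclic transitive groups being regular), Theorem~\ref{thm:prim} applies: $\lindim_F(G,\Omega)$ equals the smallest codimension of a submodule $W \leqslant F^\Omega$ that is not a hyperplane. The task therefore reduces to analysing the submodule lattice of the permutation module $F^\Omega$ for each almost simple $2$-transitive $G$ and each field $F$. For any transitive $G$ the module $F^\Omega$ contains the trivial line $F\mathbf{1}:=\langle \sum_\omega e_\omega\rangle$ and the augmentation submodule $U := \{\sum_\omega a_\omega e_\omega \mid \sum_\omega a_\omega = 0\}$ of codimension $1$; if $p:=\ch(F)$ and $n:=|\Omega|$ satisfy $p \nmid n$ then $F^\Omega = F\mathbf{1} \oplus U$, while if $p \mid n$ then $F\mathbf{1} \subset U$ and the heart $H := U/F\mathbf{1}$ has dimension $n-2$.

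In the generic case where the heart (respectively $U$ if $p \nmid n$) is irreducible as an $FG$-module, the submodules of $F^\Omega$ are exactly $0, F\mathbf{1}, U, F^\Omega$, with codimensions $n, n-1, 1, 0$; the unique hyperplane is $U$, so Theorem~\ref{thm:prim} gives the default value $\lindim_F(G,\Omega) = n-1$. The exceptional entries in Table~\ref{tab:2transLinDim} must therefore correspond to pairs $(G,F)$ for which the heart is reducible. I would then proceed case-by-case through the classification: the natural action of $A_n$ and $S_n$ on $n$ points (where the heart is always irreducible, by James's work on Specht modules); the projective actions of $\PSL_d(q) \leqslant G \leqslant \PSiL_d(q)$ on $\PG_{d-1}(q)$ (where in the defining characteristic $\ch(F) \mid q$ the module has a long composition series analysed by Mortimer and others); the actions of $\Sp_{2m}(2)$, $\PSU_3(q)$, $\mathrm{Sz}(q)$, and $\Ree(q)$; and the sporadic actions involving $M_{11}, M_{12}, M_{22}, M_{23}, M_{24}, \HS, \Co_3$, together with small-degree exceptional actions such as $A_7$ on $15$ points and $\PSL_2(11)$ on~$11$. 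In each case I would invoke the published decomposition of $F^\Omega$, read off the smallest non-hyperplane codimension, and verify that the resulting quotient yields an injective $\varphi$---automatic for any $W$ of codimension greater than~$1$, since $2$-transitivity forces the coset map $\omega \mapsto e_\omega + W$ to be injective whenever $U \not\subseteq W$.

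The main obstacle is the defining-characteristic analysis of the classical families, where $F^\Omega$ is genuinely non-semisimple and the submodule lattice is richer than the generic chain. For $\PSL_d(q)$ on $\PG_{d-1}(q)$ with $\ch(F)$ dividing $q$, we must identify the second-smallest proper submodule containing $F\mathbf{1}$ in order to produce a witness of dimension strictly below $n-1$, using the known description of the socle filtration in terms of truncated polynomial representations of $\SL_d$. Similar care is needed for the unitary, Suzuki and Ree families, while for the sporadic $2$-transitive actions the composition factors are available in the modular Atlas and must be assembled into the appropriate codimension bound. Once these ingredients are in place, the smallest non-hyperplane codimension in each case matches the value recorded in Table~\ref{tab:2transLinDim}, completing the proof.
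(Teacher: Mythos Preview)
Your proposal is correct and follows essentially the same route as the paper: reduce via Theorem~\ref{thm:prim} to finding the smallest non-hyperplane codimension in $F^\Omega$, exploit self-duality to look for the smallest submodule of dimension greater than~$1$, and then work through Mortimer's case analysis of the heart for each family in the classification. The paper's execution differs mainly in specificity: for $\PSL_d(q)$ in defining characteristic it invokes Bardoe--Sin rather than a general ``socle filtration'' statement, and for $\Suz(q)$ and $\Ree(q)$ it uses the Brauer-tree machinery for blocks with cyclic defect (via Hiss and Landrock--Michler) to show that $F^\Omega$ is uniserial and to read off the layer dimensions, a technical point your sketch glosses over with ``similar care is needed''.
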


We pose further natural problems in Section \ref{sec:problems}.

\begin{table}[ht]
 \begin{center}
 \begin{tabular}{c|cccc}
  \hline
   & $\soc(G)$ & $n$ & $\lindim_F(G,\Omega)$ & conditions\\
  \hline
  1 & $\PSL_d(q)$ & $\frac{q^d-1}{q-1}$ & $\binom{d+p-2}{d-1}^t$ & $d\geqslant 3$, $q=p^t$ \\
  
  2 & ${\rm A}_7$ & $15$ & $4$ & $p=2$ \\
  
  3 & $\Sp_{2d}(2)$ & $2^{2d-1}-2^{d-1}$ & $2d+1$ & $p=2$, $d\geqslant 3$ \\
  4 & $\Sp_{2d}(2)$ & $2^{2d-1}+2^{d-1}$ & $2d+1$ & $p=2$, $d\geqslant 2$ \\
  
  5 & $\PSL_2(q)$ & $q+1$ & $\frac{q+1}{2}$ & $q\equiv\pm 1\pmod 8$ \\  
   &  & &  &  $p=2$, $G\leqslant\PSiL_2(q)$ \\
  
  6 & $\PSL_2(q)$ & $q+1$ & $\frac{q+1}{2}$ & $q\equiv\pm 3\pmod 8$ \\
   & & & & $\F_4\leqslant F$, $G\leqslant\PSiL_2(q)$ \\
  
  7 & $\Suz(q)$ & $q^2+1$ & $m(q-1)/2+1$ & $m^2=2q$, $p\mid q+1+m$ \\
  
  8 & $\PSU_3(q)$ & $q^3+1$ & $q^2-q+1$ & $p\mid q+1$ \\
  
  9 & $\Ree(q)$ & $q^3+1$  & $q^2-q+1$ & $p=2$ \\
  10 & $\Ree(q)$ & $q^3+1$ & $\frac{mq^2-m}{6}+\frac{q^2-q}{2}+1$ & $m^2=3q$, $2\neq p\mid q+1$ \\
  11 & $\Ree(q)$ & $q^3+1$ & $\frac{m}{3}(q^2-1)+1$ & $m^2=3q$, $2\neq p\mid q+m+1$ \\

  12 & $\mat_{22}$ & $22$ & $10$ & $p=2$ \\
  13 & $\mat_{23}$ & $23$ & $11$ & $p=2$ \\
  14 & $\mat_{24}$ & $24$ & $12$ & $p=2$ \\
  15 & $\mat_{11}$ & $12$ & $6$ & $p=3$ \\
  
  16 & $\PSL_2(11)$ & $11$ & $5$ & $p=3$ \\
  
  17 & $\HS$ & $176$ & $21$ & $p=2$ \\
  18 & $\HS$ & $176$ & $49$ & $p=3$ \\
  
  19 & $\Co_3$ & $276$ & $23$ & $p=2$ \\
  20 & $\Co_3$ & $276$ & $126$ & $p=3$ \\
  \hline
 \end{tabular}
 \caption{Almost simple $2$-transitive groups $G$ of degree $n$, listed by their socle, for which $\lindim_F(G,\Omega)\neq n-1$, where $\ch(F)=p$.}
 \label{tab:2transLinDim}
 \end{center}
\end{table}

The motivation for the definition of linear dimension \cite{DAlconzo2024} was in proving that certain group actions are not appropriate for proposed cryptographic applications. Roughly speaking, Proposition~14 and Corollary~15 of \cite{DAlconzo2024} imply that if $G_\lambda\leqslant\Sym(\Omega_\lambda)$ is some family of permutation groups, indexed by $\lambda\in\mathbb{N}$, and there exists a finite field $\F_q$ such that $\lindim_q(G_\lambda,\Omega_\lambda)$ is logarithmic in $|G_\lambda|$ (or $|\Omega_\lambda|$), then the family of actions of $G_\lambda$ on $\Omega_\lambda$ does not satisfy the desirable \emph{multiple one-wayness} assumption, nor the \emph{weak unpredictability} and \emph{weak pseudorandomness} assumptions defined in \cite{alamati2020cryptographic}. In particular, taking $\lambda$ to be the \emph{security parameter}, as defined in \cite[Section~2.2]{DAlconzo2024}, gives $\log(|G_\lambda|)=O(\mathsf{poly}(\lambda))$ and $\log(|\Omega_\lambda|)=O(\mathsf{poly}(\lambda))$, and \cite[Proposition~14]{DAlconzo2024} shows that if $\lindim_F(G_\lambda,\Omega_\lambda)=\mathsf{poly}(\lambda)$ for some finite field~$F$, then the group action does not satisfy the multiple one-wayness assumption, provided a certain additional condition holds. 

Given the discussion in the preceding paragraph, Theorem~\ref{thm:2transLindim} shows, for example, that the $2$-transitive actions of $A_n$, $ S_n$ or $\Sp_{2d}(2)$ are not likely to satisfy the multiple one-wayness assumption. On the other hand, if we let $q$ be a function exponential in $\lambda$, and let $G_\lambda$ be one of the families $\PSL_2(q)$, $\Suz(q)$, $\PSU_3(q)$ or $\Ree(q)$ acting $2$-transitively on the relevant set $\Omega_\lambda$, then there is a polynomial relationship between $|G_\lambda|$ and $\lindim_F(G_\lambda,\Omega_\lambda)$ for any field $F$. Hence, \cite[Proposition~14]{DAlconzo2024} does not apply to these four families of groups, leaving them open as potential candidates for cryptographic applications.

\subsubsection*{Acknowledgements} \quad We thank Gerhard Hiss and Peter Sin for their very useful comments regarding the modular representations of finite $2$-transitive groups.

We thank the Centre for the Mathematics of Symmetry and Computation at The University of Western Australia for organising the annual research retreat of 2025 at which this research project began.

The research of Michael Giudici and Luke Morgan was  supported by  the Australian Research Council via the Discovery Project grant DP230101268.

Daniel Hawtin was partially funded by the Croatian Science Foundation under the project number HRZZ--IP--2022--10--4571 and is grateful for the support of the Centre for the Mathematics of Symmetry and Computation's 2024 Cheryl E.~Praeger Visiting Fellowship.

 Lukas Klawuhn was partially funded by the Deutsche Forschungsgemeinschaft (DFG, German Research Foundation) – Project-ID 491392403 – TRR 358. He would also like to thank the DAAD (German Academic Exchange Service) for funding a 2-month research visit to The University of Western Australia during which this project started.

\section{Representation theory preliminaries}

In this section, we outline some representation theory concepts that will be needed later. 

Let $V$ be a vector space over a field $F$ and $G$ be a group. We call $V$ an $FG$-module if we can define $v^g\in V$ for all $v\in V$ and $g\in G$ such that $(v+w)^g=v^g+w^g$, $v^{gh}=(v^g)^h$, $(\lambda v)^g=\lambda (v^g)$ and $v^{1_G}=v$ for all $v,w\in V$ and $g,h\in G$. Note that every $g \in G$ then induces an $F$-linear map on $V$ and so we obtain a homomorphism $\rho:G\rightarrow \GL(V)$. Conversely, given a representation $\rho:G\rightarrow \GL_n(F)$ of $G$, that is, $\rho$ is a homomorphism, then $V=F^n$ inherits an $FG$-module structure via $v^g=v^{g\rho}$ for all $v\in V$ and $g\in G$.  We will interchangeably move between the language of $FG$-modules and representations. 

The \emph{character} of a representation $\rho$ is the map $\chi:G\rightarrow F$ such that $\chi(g)$ is the trace of the matrix $(g)\rho$.  The character of an $FG$-module is the character of the corresponding representation. When $F=\mathbb{C}$, we call a character of $G$ an \emph{ordinary} character. Moreover, any $\mathbb{C}G$-module can be decomposed as a direct sum of irreducible $\mathbb{C}G$-modules and the corresponding character $\chi$ can be written as a sum of irreducible characters. The irreducible characters in the sum are called the \emph{irreducible constituents} of $\chi$.

Let $G$ be a group acting on the set $\Omega$, let $F$ be a field and let $F^\Omega$ be the permutation module over $F$ with basis $\{e_\omega\mid \omega\in\Omega\}$. The character $\pi$ of the permutation module is called the \emph{permutation character} and for each $g\in G$ we have that $\pi(g)$ is the number of elements of $\Omega$ that are fixed by~$g$. Now $F^\Omega$ is equipped with the symmetric bilinear form 
\begin{equation}\label{F^n:bilinear form}   
\left\langle \sum \lambda_\omega e_\omega,\sum \mu_\omega  e_\omega \right \rangle=\sum \lambda_\omega\mu_\omega.
\end{equation}
 Let $C=\langle \sum e_\omega\rangle$
be the subspace of ``constant'' vectors of $F^\Omega$. With
 the non-degenerate bilinear form defined in \eqref{F^n:bilinear form}, we have that $C^\perp=\{\sum \lambda_\omega e_\omega\mid \sum\lambda_\omega=0\}$, which has dimension $n-1$. Note that both $C$ and $C^\perp$ are submodules of the permutation module, and we  refer to $C^\perp/(C \cap C^\perp)$ as the \emph{fully deleted permutation module}.

We have the following result about the smallest faithful irreducible representations of $A_n$.

\begin{theorem}
\label{thm:mods for Sn}
Let $G=A_n$  with $n\geqslant 5$  and let $F$ be a field.  If $  U  $ is a proper non-zero submodule of the permutation module $F^n$, then $U=C$ or $U=C^\perp$. Moreover, the following hold:
\begin{enumerate}[$(1)$]
    \item if $\ch(F)=0$ or $\ch(F)\nmid n$, then $F^n \cong C \oplus C^\perp$;
        \item if $\ch(F)\mid n$, then $ C \subseteq C^\perp  $;
        \item for $n\geqslant 10$ the fully deleted permutation module $C^\perp  / (C^\perp \cap C)$ is the unique faithful irreducible $FG$-module of smallest dimension. Furthermore, this  dimension equals $n-1$ when
        $\ch(F)=0$ or $\ch(F)\nmid n$, and equals $n-2$ when $\ch(F)\mid n$.
\end{enumerate}
\end{theorem}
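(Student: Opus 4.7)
The plan is: use the bilinear form to pin down the relation between $C$ and $C^\perp$, giving parts~(1) and~(2); then prove the classification of submodules by showing $C^\perp/(C\cap C^\perp)$ is irreducible; and finally deduce~(3) by citing a classical minimality result for faithful representations of $A_n$.

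Both $C$ and $C^\perp$ are $FA_n$-submodules since $A_n$ permutes the basis and preserves the form \eqref{F^n:bilinear form}. The calculation $\bigl\langle\sum e_\omega,\sum e_\omega\bigr\rangle=n$ decides everything: when $\ch(F)\mid n$, one has $C\subseteq C^\perp$, giving~(2); when $\ch(F)\nmid n$, one has $C\cap C^\perp=0$, and a dimension count gives $F^n = C\oplus C^\perp$, giving~(1).

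For the classification of submodules, the key step is that $C^\perp/(C\cap C^\perp)$ is irreducible. Let $W\subseteq C^\perp$ be a submodule with $W\not\subseteq C\cap C^\perp$, and pick $w=\sum\lambda_\omega e_\omega\in W$ whose coordinates are not all equal. Choosing a 3-cycle $g=(\omega_1\,\omega_2\,\omega_3)$ with $\lambda_{\omega_1}\neq \lambda_{\omega_2}$ gives a non-zero vector $w-w^g\in W$ supported on three points with zero coordinate sum. A second 3-cycle on $\{\omega_3,\omega_4,\omega_5\}$ with $\omega_4,\omega_5\notin\{\omega_1,\omega_2,\omega_3\}$, available since $n\geqslant 5$, then yields a vector of the shape $e_i-e_j\in W$; 2-transitivity of $A_n$ spreads this to all $e_k-e_l$, and these span $C^\perp$, so $W=C^\perp$. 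To finish, the augmentation $F^n\to F$ (kernel $C^\perp$) splits a proper non-zero submodule $U$ into two cases. If $U\subseteq C^\perp$, irreducibility gives $U\in\{C\cap C^\perp,\,C^\perp\}$. If $U+C^\perp=F^n$, then either $U\supseteq C^\perp$ (forcing $U=F^n$, excluded) or $U\cap C^\perp\subseteq C\cap C^\perp$: when $\ch(F)\nmid n$ this forces $\dim U=1$, and since $A_n$ is perfect for $n\geqslant 5$, the only $A_n$-fixed line (by transitivity) is $C$, giving $U=C$; when $\ch(F)\mid n$, we would get $\dim U=2$ with $U/C$ trivial, and the cocycle $g\mapsto u^g-u\in C$ is a homomorphism $A_n\to F^+$, hence trivial by perfectness, forcing $u\in C$, a contradiction.

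For part~(3), I first verify that $M:=C^\perp/(C^\perp\cap C)$ is faithful: if $g\in A_n$ acts trivially on $M$, then $e_{i^g}-e_{j^g}-(e_i-e_j)\in C\cap C^\perp$ for every $i\neq j$; a support count (the left-hand side has at most four non-zero entries while any non-zero multiple of $\sum e_\omega$ has $n\geqslant 10$ entries) forces this difference to vanish, so $g$ fixes every pair of points and is trivial. The dimension of $M$ is $n-1$ when $\ch(F)\nmid n$ and $n-2$ when $\ch(F)\mid n$. Minimality and uniqueness of $M$ among faithful irreducible $FA_n$-modules for $n\geqslant 10$ is the classical theorem of Wagner on minimal-degree faithful representations of the alternating groups, which I would cite. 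The main obstacle is precisely this last step: it rests on a non-trivial classification (one must rule out exceptional small-dimensional faithful modules that exist for $n\leqslant 9$ such as the $4$-dimensional module of $A_7$ in characteristic $2$), whereas everything else reduces to elementary manipulations with 3-cycles and perfectness of $A_n$.
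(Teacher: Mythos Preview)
Your proof is correct and considerably more detailed than the paper's own treatment. The paper dispenses with the submodule classification and parts~(1)--(2) in a single line (``elementary''), and for part~(3) cites \cite[Proposition~5.3.5]{KL} together with a remark that Wagner's original work~\cite{Wagner1} omitted a case for $A_9$, corrected by James~\cite{james}. You instead supply an explicit 3-cycle argument for the irreducibility of $C^\perp/(C\cap C^\perp)$ and a cocycle/perfectness argument to rule out stray submodules, then cite Wagner for the minimality in~(3). Both routes ultimately rest on the same external classification for~(3); your added value is a self-contained proof of the submodule structure, which the paper simply asserts. One small point of polish: in your reduction from a vector supported on three points to one of the form $e_i-e_j$, you should note explicitly that if the third coefficient $c$ vanishes the vector is already of the desired shape, so the second 3-cycle is only needed when $c\neq 0$.
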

\begin{proof}
Parts (1) and (2) are elementary. For part (3)
see \cite[Proposition 5.3.5]{KL}. Note that~\cite{Wagner1}~missed that $A_9$ has three irreducible representations of dimension 9, as corrected by James~\cite[p421]{james}.
\end{proof}

Let $G$ be a group with subgroup $H$. If $V$ is an $FH$-module, then we denote the \emph{induced $FG$-module} by $V\uparrow_H^G$.
This is a vector space of dimension $|G:H|\dim(V)$. See \cite[Chapter 5]{Isaacs} for more details. If $\chi$ is the character of $V$, then the character of $V\uparrow_H^G$ is denoted by $\chi \uparrow_H ^G$. If $G$ is a transitive permutation group with point stabiliser $H$, then the permutation character of $G$ is equal to $1\uparrow_H^G$ by \cite[(5.14) Lemma]{Isaacs}.

\section{Group action representation preliminaries}

Let $G$ be a group acting on a set $\Omega$ and let $(\rho,\varphi)$ be a representation of the group action over a field $F$. We define $G^{\Omega}$ to be the permutation group induced by $G$ on~$\Omega$.

The bound below was observed in the proof of \cite[Proposition 24]{DAlconzo2024} for the particular case $H=A_n$ and $G=S_n$, we give a general statement.

\begin{lemma}
\label{lem: h leq g}
    Let $G$ act on $\Omega$ and let $H \leqslant G$. Then $\lindim_F(H,\Omega) \leqslant \lindim_F(G,\Omega)$.
\end{lemma}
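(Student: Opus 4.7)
The plan is to take a minimal witness for the linear dimension of the $G$-action on $\Omega$ and show that, by restricting the homomorphism $\rho$ to $H$, the very same vector space together with the same map $\varphi$ furnishes a representation of the action of $H$ on $\Omega$. Since this candidate witness for $H$ has the same dimension as the minimal witness for $G$, the inequality will follow by the minimality in the definition of $\lindim_F(H,\Omega)$.

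In more detail, let $V$ be a witness of $\lindim_F(G,\Omega)$ with associated representation $(\rho,\varphi)$, where $\rho\colon G\rightarrow \GL(V)$ is a homomorphism and $\varphi\colon \Omega\rightarrow V$ is injective satisfying the intertwining property \eqref{intertwin}. The first step is to set $\rho' := \rho|_H\colon H\rightarrow \GL(V)$, which is still a group homomorphism because the restriction of a homomorphism to a subgroup is a homomorphism, and to keep $\varphi'=\varphi$ unchanged; clearly $\varphi'$ remains injective since it is literally the same function.

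The next step is to verify the intertwining property for the pair $(\rho',\varphi')$: for every $\omega\in\Omega$ and every $h\in H$ one has $h\in G$, so
\[
(\omega^h)\varphi' = (\omega^h)\varphi = (\omega\varphi)^{h\rho} = (\omega\varphi')^{h\rho'},
\]
where the middle equality is just \eqref{intertwin} applied to the element $h$ of $G$. Hence $V$ affords a representation of the action of $H$ on $\Omega$ over $F$, and by definition of the linear dimension we obtain $\lindim_F(H,\Omega)\leqslant \dim(V) = \lindim_F(G,\Omega)$.

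There is essentially no genuine obstacle here: the argument is pure bookkeeping since the intertwining condition is a universally quantified statement over $G$ that automatically survives restriction to any subset, and $\varphi$ does not depend on the acting group at all. The only point worth flagging is that one must remember to check that the \emph{same} $\varphi$ works for the $H$-action, which is immediate, so no choice of a new injection is required.
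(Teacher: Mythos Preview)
Your proof is correct and follows exactly the same approach as the paper: take a witness $V$ for $\lindim_F(G,\Omega)$ afforded by $(\rho,\varphi)$, observe that $(\rho|_H,\varphi)$ is a representation of the $H$-action, and conclude. The paper's version is simply terser, omitting the explicit verification of the intertwining property that you spell out.
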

\begin{proof}
    Let $V$ be a witness to $\lindim_F(G,\Omega)$ afforded by $(\rho,\varphi)$. Then $(\rho|_H,\varphi)$ is a representation of the group action of $H$ on $\Omega$ over $F$. Thus 
    $$\lindim_F(H,\Omega) \leqslant \dim(V) = \lindim_F(G,\Omega)$$
    as required.
\end{proof}

The next result is concerned with the effect of field extensions on the linear dimension. If $F$ is an extension field of $L$ and $V$ is an $LG$-module, we obtain an $FG$-module $V^F$ by `extending' scalars: this module is simply $V\otimes_L F$.

\begin{lemma}
\label{lem: ext field}
Let $G$ act on $\Omega$, and let $F$ and $L$ be fields such that $|F:L|=\ell$. Then \[ \lindim_F(G,\Omega) \leqslant\lindim_L(G,\Omega)\leqslant \ell \lindim_F(G,\Omega).\]
\end{lemma}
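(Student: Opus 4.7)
The proof is a routine application of the two standard scalar-change operations: extension of scalars (from $L$ up to $F$) for the lower bound, and restriction of scalars (viewing an $F$-vector space as an $L$-vector space) for the upper bound. The plan is to take a witness on each side and transport it across the field extension, checking that the intertwining property and the injectivity of $\varphi$ both survive.

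For the first inequality $\lindim_F(G,\Omega)\leqslant \lindim_L(G,\Omega)$, I would start with a witness $V$ over $L$ afforded by some $(\rho,\varphi)$ with $\dim_L V=\lindim_L(G,\Omega)$. Form $V^F=V\otimes_L F$, which is an $FG$-module with $\dim_F V^F=\dim_L V$. Define $\varphi':\Omega\to V^F$ by $\omega\mapsto (\omega\varphi)\otimes 1$ and $\rho':G\to \GL_F(V^F)$ by extending $\rho$ $F$-linearly. Since $F$ is a free (hence faithfully flat) $L$-module, the canonical map $V\to V\otimes_L F$ is injective, so $\varphi'$ is injective. The intertwining property $(\omega^g)\varphi'=(\omega\varphi')^{g\rho'}$ follows directly from the corresponding identity for $(\rho,\varphi)$ together with the definition of $\rho'$ on simple tensors.

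For the second inequality $\lindim_L(G,\Omega)\leqslant \ell\,\lindim_F(G,\Omega)$, take a witness $V$ over $F$ afforded by $(\rho,\varphi)$ with $\dim_F V=\lindim_F(G,\Omega)$. Since $|F:L|=\ell$, restriction of scalars gives an $L$-vector space $V_L$ with $\dim_L V_L=\ell\dim_F V$. Every $F$-linear map is $L$-linear, so $\rho$ takes values in $\GL_L(V_L)$, giving an $L$-representation; the same $\varphi$ is still injective as a map into $V_L$, and the intertwining property is literally the same equation. Hence $V_L$ affords a representation of the action of $G$ on $\Omega$ over $L$, and the bound follows.

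No step is really an obstacle; the only point worth flagging is the injectivity of $\varphi'$ in the first inequality, which is where one quietly uses that a field extension is flat (so that $V\hookrightarrow V\otimes_L F$). Everything else is bookkeeping about tensor products and scalar restriction.
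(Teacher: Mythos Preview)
Your proposal is correct and follows essentially the same approach as the paper: extension of scalars via $V\otimes_L F$ for the first inequality and restriction of scalars (viewing the $F$-space as an $L$-space of dimension $\ell\dim_F V$) for the second. The only difference is that you explicitly invoke flatness to justify the injectivity of $V\hookrightarrow V\otimes_L F$, whereas the paper simply asserts the embedding; otherwise the arguments are identical.
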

\begin{proof}
Let $W$ be a witness for $\lindim_F(G,\Omega)$ afforded by $(\rho,\varphi)$. Then $W$ is also an $L$-vector space of dimension $\ell \dim(W)$ and elements of $(G)\rho$ also induce $L$-linear transformations.  Thus, $W$ also affords a representation for the action of $G$ on $\Omega$ over $L$ and the right hand inequality of the statement holds.

 Now let $V$ be a witness to $\lindim_L(G,\Omega)$ and consider $V^F$ as an $FG$-module.  We have  an embedding $\GL(V)\leqslant \GL(V^F)$ such that $(v\otimes \lambda)^g=v^g\otimes \lambda$  for all $g\in\GL(V)$, $v\in V$ and $\lambda\in F$.  Further, if $V$ affords the representation $(\rho,\varphi)$, then $V\otimes F$ affords the representation $(\rho',\varphi')$ where~$\rho'$ is induced by the embedding $\GL(V)\leqslant \GL(V^F)$ and $\varphi':\omega \mapsto \omega\varphi \otimes 1$. Thus,
 \[
    \lindim_F(G,\Omega)\leqslant \dim_F(V^F) = \dim_L(V) = \lindim_L(G,\Omega). \qedhere
\]
\end{proof}

 We make heavy use of the fact that, given a representation $(\rho,\varphi)$ for the action of $G$ on $\Omega$, the actions of $G$ on $\Omega$ and on  $(\Omega)\varphi$ (via $\rho$) are equivalent. In fact, the converse to this statement also holds, which allows us to give upper bounds on the $F$-linear dimension of a group action.

\begin{lemma}
\label{lem: rep affords action}
    Let $G$ act transitively on a set $\Omega$ and $\rho : G \rightarrow \GL(V)$ be a  faithful  $F$-linear representation. Then there exists an injection $\varphi : \Omega \rightarrow V$ such that $(\rho,\varphi)$ is a representation of the group action over $F$ if and only if there exist $\alpha\in\Omega$ and $v\in V$ such that  $(G\rho)_v=(G_\alpha)\rho$. 
\end{lemma}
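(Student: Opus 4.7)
The plan is to prove each direction of the equivalence separately; transitivity of $G$ on $\Omega$ makes both constructions essentially a matter of tracking what happens at a single orbit representative.

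For the forward direction, I would fix any $\alpha \in \Omega$ and set $v := \alpha\varphi$. The intertwining property~\eqref{intertwin} gives $v^{g\rho} = (\alpha^g)\varphi$, and by injectivity of $\varphi$ this equals $v = \alpha\varphi$ if and only if $g \in G_\alpha$. Since every element of $G\rho$ has the form $g\rho$ for some $g \in G$, this reads exactly as the required equality $(G\rho)_v = (G_\alpha)\rho$.

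For the converse, assume $\alpha \in \Omega$ and $v \in V$ satisfy $(G\rho)_v = (G_\alpha)\rho$. Using transitivity, every point of $\Omega$ has the form $\alpha^g$, so I would attempt to define $\varphi : \Omega \to V$ by $(\alpha^g)\varphi := v^{g\rho}$. The inclusion $(G_\alpha)\rho \subseteq (G\rho)_v$ shows that whenever $\alpha^g = \alpha^h$ we have $gh^{-1} \in G_\alpha$, so $(gh^{-1})\rho$ fixes $v$ and hence $v^{g\rho} = v^{h\rho}$: the map $\varphi$ is well-defined. The intertwining property then follows immediately from the definition and the fact that $\rho$ is a homomorphism.

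Injectivity of $\varphi$ requires the reverse inclusion, and this is the only step where the hypothesis of faithfulness of $\rho$ plays a role, which is why I expect it to be the main subtlety. Namely, if $v^{g\rho} = v^{h\rho}$ then $(gh^{-1})\rho \in (G\rho)_v = (G_\alpha)\rho$, so $(gh^{-1})\rho = k\rho$ for some $k \in G_\alpha$. Faithfulness then lets me cancel $\rho$ to conclude $gh^{-1} = k \in G_\alpha$, giving $\alpha^g = \alpha^h$. Without faithfulness, $gh^{-1}$ and $k$ would only agree modulo $\ker\rho$, so distinct points of $\Omega$ could collapse under $\varphi$; verifying this cancellation is the step I would check most carefully.
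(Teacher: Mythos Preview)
Your proof is correct and follows essentially the same approach as the paper. The only difference is cosmetic: for the converse direction the paper phrases the argument in terms of equivalence of transitive actions on coset spaces (the action of $G\rho$ on $v^{G\rho}$ being equivalent to that on $[G\rho:(G_\alpha)\rho]$, which by faithfulness is equivalent to the action of $G$ on $[G:G_\alpha]$, and hence on $\Omega$), whereas you unpack this equivalence explicitly by defining $(\alpha^g)\varphi:=v^{g\rho}$ and checking well-definedness and injectivity by hand---these are the same argument at different levels of abstraction.
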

\begin{proof}
    In the forward direction, let  $\varphi : \Omega \rightarrow V$  be as in the statement. Pick  $\alpha \in \Omega$ and set $v=\alpha\varphi$. For $g\in G$ we have  $v^{g\rho} = (\alpha\varphi)^{g\rho}=(\alpha^g)\varphi $, so $g\rho \in (G\rho)_v$ if and only if $g\in G_\alpha$. That is, $(G_\alpha)\rho = (G\rho)_v$. 

    Conversely, if there is $\alpha\in\Omega$ and $v\in V$ such that $(G\rho)_v = (G_\alpha)\rho$, then the action of $G\rho$ on the orbit $v^{G\rho}$ is equivalent to the action of $G\rho$ on the set of cosets $[G\rho :(G \rho)_v]=[G\rho : (G_\alpha)\rho]=:\Omega'$. Since $\rho$ is faithful, this action is equivalent to the action of $G$ on the set of right cosets $[G:G_\alpha]$, which is  equivalent to the action of $G$ on $\Omega$. Hence, there is a bijection $\varphi : \Omega \rightarrow v^{G\rho}$ which we may view as an injection $\varphi : \Omega \rightarrow V$. Since the actions on the sets $\Omega$ and $v^{G\rho}$ are  equivalent, this means $(\rho,\varphi)$ is a representation of the group action over $F$.
\end{proof}

We will also make use of the fact that the permutation group induced on $(\Omega)\varphi$ is a quotient of~$(G)\rho$.

\begin{lemma}\label{lem:cyclic}
Let $G$ be a finite group acting on a set $\Omega$ such that $\lindim_F(G,\Omega)=1$. Then $G^{\Omega}$ is cyclic. 
\end{lemma}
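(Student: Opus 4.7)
The plan is to exploit the hypothesis that there is a witness $(\rho,\varphi)$ with $V$ of dimension $1$ over $F$. After choosing a basis for $V$, I would identify $V$ with $F$ and $\GL(V)$ with the multiplicative group $F^\times$, so that $\rho$ becomes a homomorphism $G \to F^\times$. The intertwining property \eqref{intertwin} then reads $(\omega^g)\varphi = (\omega\varphi)(g\rho)$, which says that the action of $G$ on $\Omega$ is equivalent to the action of $(G)\rho$ on $(\Omega)\varphi \subseteq F$ by multiplication.

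First I would handle the degenerate possibility that $\omega\varphi = 0$ for some $\omega \in \Omega$. Since $\varphi$ is injective this can occur for at most one point, and the intertwining property forces that point to be fixed by every element of $G$. If $|\Omega|=1$ the conclusion is trivial, so I may assume that $(\Omega)\varphi$ contains a non-zero vector $v$.

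Next I would compute the kernel of the action of $G$ on $\Omega$. By the equivalence described above, $g \in G$ acts trivially on $\Omega$ if and only if $(\omega\varphi)(g\rho) = \omega\varphi$ for every $\omega \in \Omega$. The presence of the non-zero $v \in (\Omega)\varphi$ turns this condition into $g\rho = 1$, and hence $G^\Omega \cong G/\ker\rho \cong (G)\rho$. Since $G$ is finite, $(G)\rho$ is a finite subgroup of $F^\times$, and the classical fact that every finite subgroup of the multiplicative group of a field is cyclic completes the argument.

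There is really no genuine obstacle here: the whole proof is a matter of unwinding the definition of a one-dimensional group action representation. The only mild subtleties are that $\rho$ need not be faithful, so one must pass to $G/\ker\rho$ before invoking the cyclicity of finite subgroups of $F^\times$, and that one must dispose of the case where a point of $\Omega$ maps to $0 \in V$.
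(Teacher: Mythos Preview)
Your proof is correct and follows essentially the same approach as the paper: identify $\GL(V)$ with $F^\times$, observe that $(G)\rho$ is a finite subgroup of $F^\times$ and hence cyclic, and deduce that $G^\Omega$ is cyclic. The paper is slightly terser, concluding only that $G^\Omega$ is a \emph{quotient} of $(G)\rho$ (using the general observation $\ker\rho\leqslant K$ made just before the lemma), whereas you go a little further and verify $K=\ker\rho$ in this one-dimensional setting; your extra care with the possible zero image is harmless and not needed for the weaker quotient statement.
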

\begin{proof}
Suppose that $\lindim_F(G,\Omega)=1$. Then there exists a homomorphism $\rho:G\rightarrow\GL_1(F)$ and an injection $\varphi : \Omega \rightarrow F$. Since finite subgroups of the multiplicative group of a field are cyclic, it follows that $(G)\rho$ is cyclic. Moreover, since the actions of $G$ on $\Omega$ and $(\Omega)\varphi$ are permutationally isomorphic, they induce isomorphic permutation groups. Thus, $G^{\Omega}$ is isomorphic to a quotient of $(G)\rho$ and so $G^{\Omega}$ is cyclic.   
\end{proof}

Next, we make some observations about the faithfulness of the group action and the linear representation.
Let $G$ act on $\Omega$ and let $(\rho,\varphi)$ be a representation of the group action over a field $F$ where $\rho: G \to \GL(V)$. Let $K$ be the  kernel of the action of $G$ on $\Omega$.  Since $\ker(\rho)$ acts trivially on $V$, each $g\in \ker(\rho)$ fixes each $\omega\varphi$. Hence, $g$ fixes each $\omega \in \Omega$, 
which implies that
$g\in K$, and 
thus
$\ker(\rho) \leqslant K$. In particular, faithful permutation actions must have faithful linear representations.  

As a partial converse, if $V$ is spanned by $\{ \omega\varphi \mid \omega \in \Omega \}$ (which happens if $V$ is a witness of $\lindim_F(G,\Omega)$), then $K$ acts trivially on $V$, and so $K \leqslant \ker(\rho)$. In particular, if $V$ is a witness to $\lindim_F(G,\Omega)$, then $K=\ker(\rho)$. Note that this is not true for an arbitrary representation as the following example shows.

\begin{example}
Let $G=\SL(2,3)$ and let $\Omega$ be the set of $1$-subspaces of $\F_3^2$. Then $|\Omega|=4$, and $Z:=Z(G)\cong C_2$ is the kernel of the action of $G$ on $\Omega$. Moreover, $G/Z = \PSL(2,3)\cong A_4$. 
Since~$G$ is not cyclic, and $G/Z$ is not a subgroup of $\GL(2,3)$, we have $\lindim_3(G,\Omega)=3$, with witness $V=\mathbb F_3^\Omega/\langle e_1+e_2+e_3 +e_4\rangle$ affording the representation $(\rho,\varphi)$. As described above, $\ker(\rho)=Z$.

Let $W=\mathbb F_3^2$ be the natural module for $G$. Now $V\oplus W$ is an $FG$-module and we abuse notation to use $\varphi$ to refer to the embedding of $\Omega$ into $V\oplus W$. Let $\rho'$ be the linear representation of $G$ on $V\oplus W$. Then $(\rho',\varphi)$ is a representation of the group action over $F$, and $\ker(\rho')=1$ since $G$ acts faithfully on $W$. In particular, the kernel of the action $Z$ does \emph{not} equal $\ker(\rho')$.
\hfill $\diamond$
\end{example}

We have the following lemma which means that we   only need to consider faithful actions to determine the linear dimension.

\begin{lemma}
Let $G$ act on the set $\Omega$ with kernel $K$ and let $F$ be a field. Then $\lindim_F(G,\Omega) = \lindim_F(G/K,\Omega)$.
\end{lemma}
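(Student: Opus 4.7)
The plan is to show the two inequalities separately, using the simple observation that the action of $G$ on $\Omega$ factors through $G/K$ together with the discussion immediately preceding the lemma about the relationship between $\ker(\rho)$ and $K$ when the module is a witness.

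For the inequality $\lindim_F(G/K,\Omega)\leqslant \lindim_F(G,\Omega)$, I would start with a witness $V$ for $\lindim_F(G,\Omega)$ affording some $(\rho,\varphi)$. As noted in the paragraph before the lemma, since $V$ is a witness (hence spanned by $\{\omega\varphi\mid \omega\in\Omega\}$), we have $K=\ker(\rho)$. In particular $K\leqslant\ker(\rho)$, so $\rho$ factors through the quotient map $G\to G/K$, giving a homomorphism $\bar\rho:G/K\to \GL(V)$ with $(gK)\bar\rho=g\rho$. Because the action of $G/K$ on $\Omega$ is defined by $\omega^{gK}=\omega^g$, the intertwining property for $(\bar\rho,\varphi)$ follows directly from that for $(\rho,\varphi)$. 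Thus $(\bar\rho,\varphi)$ is a representation of the action of $G/K$ on $\Omega$ over $F$ of the same dimension as $V$, yielding the inequality.

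For the reverse inequality $\lindim_F(G,\Omega)\leqslant \lindim_F(G/K,\Omega)$, I would take a witness $V$ for $\lindim_F(G/K,\Omega)$ afforded by some $(\bar\rho,\varphi)$ and simply pre-compose with the canonical projection $\pi:G\to G/K$. Setting $\rho=\pi\bar\rho$ gives a homomorphism $G\to \GL(V)$, and the intertwining property for $(\rho,\varphi)$ is immediate from the identity $\omega^g=\omega^{gK}$. Hence $V$ affords a representation of the action of $G$ on $\Omega$ of the same dimension, proving the inequality.

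Combining the two directions yields equality. There is no real obstacle here: the whole proof rests on the fact that for a witness the kernel of $\rho$ coincides with the kernel $K$ of the action (so $\rho$ genuinely factors through $G/K$), and conversely any representation of the quotient lifts trivially to one of $G$. The only thing to be careful about is invoking the witness hypothesis in the forward direction to justify that $\rho$ factors through $G/K$; without it one only knows $\ker(\rho)\leqslant K$, which is the wrong inclusion.
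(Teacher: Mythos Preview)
Your proof is correct and follows essentially the same approach as the paper: both directions are obtained by passing between $G$ and $G/K$ via the canonical projection, with the key point in the forward direction being that for a witness $K=\ker(\rho)$ so that $\rho$ factors through $G/K$. The paper's argument is slightly more terse but identical in substance.
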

\begin{proof}
 Let $V$ be a witness to $\lindim_F(G,\Omega)$ afforded by $(\rho,\varphi)$. As mentioned above, $K=\ker(\rho)$, so $(\rho,\varphi)$ is also a  representation of the group action of $G/K$ on $\Omega$ over $F$. This proves $\lindim_F(G/K,\Omega)\leqslant \lindim_F(G,\Omega)$.
 
 Conversely, if $(\rho',\varphi')$ is a representation of the group action of $G/K$ on $\Omega$, then by composing the canonical map $\pi :G \rightarrow G/K$ with $\rho'$ we obtain a representation $(\pi\rho',\varphi')$ for the group action of $G$ on $\Omega$ over $F$. Taking $(\rho',\varphi')$ to be a representation affording a witness, we find that $\lindim_F(G,\Omega)\leqslant \lindim_F(G/K,\Omega)$, and we are done.
\end{proof}

Given an action of a group $G$ on a set $\Omega$, a \emph{$G$-congruence} is an equivalence relation $\sim$ on $\Omega$ such that $\alpha\sim\beta$ if and only if $\alpha^g\sim\beta^g$ for all $\alpha,\beta\in\Omega$ and $g\in G$. A transitive action is called \emph{primitive} if the only $G$-congruences on $\Omega$ are the \emph{trivial} ones -- equality and the universal relation.  Otherwise, it is called \emph{imprimitive} and a partition of $\Omega$ given by the set of equivalence classes of a non-trivial $G$-congruence is called a \emph{system of imprimitivity}.

The linearity of the action of $\GL(V)$ on $V$ also gives rise to a natural $G$-congruence.

\begin{lemma}
Suppose that the action of $G$ on $\Omega$ has a representation $(\rho,\varphi)$ over $F$ on $V$. For all $\alpha,\beta\in\Omega$ define $\alpha\sim\beta$ if and only if $\alpha\varphi=\lambda(\beta\varphi)$ for some $\lambda\in F\backslash\{0\}$. Then $\sim$ is a $G$-congruence.
\end{lemma}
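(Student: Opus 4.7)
The plan is to verify the two defining features of a $G$-congruence separately: first that $\sim$ is an equivalence relation on $\Omega$, and then that it is preserved by the $G$-action via the intertwining property of $(\rho,\varphi)$.

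For the equivalence relation part, reflexivity is immediate from $\alpha\varphi = 1\cdot(\alpha\varphi)$ with $1 \in F\setminus\{0\}$. Symmetry follows because the defining scalar is required to be nonzero: if $\alpha\varphi = \lambda(\beta\varphi)$ with $\lambda \in F\setminus\{0\}$, then $\beta\varphi = \lambda^{-1}(\alpha\varphi)$, giving $\beta \sim \alpha$. Transitivity is obtained by composing scalars: if $\alpha\varphi = \lambda(\beta\varphi)$ and $\beta\varphi = \mu(\gamma\varphi)$, then $\alpha\varphi = (\lambda\mu)(\gamma\varphi)$ with $\lambda\mu \neq 0$.

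For the $G$-invariance, I would use that $g\rho \in \GL(V)$ is an $F$-linear map together with the intertwining identity $(\omega^g)\varphi = (\omega\varphi)^{g\rho}$. Suppose $\alpha \sim \beta$, so $\alpha\varphi = \lambda(\beta\varphi)$ for some $\lambda \in F\setminus\{0\}$. Applying $g\rho$ to both sides and using $F$-linearity gives $(\alpha\varphi)^{g\rho} = \lambda(\beta\varphi)^{g\rho}$, and then the intertwining property rewrites this as $(\alpha^g)\varphi = \lambda(\beta^g)\varphi$, so $\alpha^g \sim \beta^g$. The reverse implication is obtained by applying the same argument with $g^{-1}$ in place of $g$ (or, equivalently, by noting that the forward direction already suffices: if $\alpha^g \sim \beta^g$ then applying the forward direction with $g^{-1}$ recovers $\alpha \sim \beta$).

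There is essentially no obstacle here; the proof is a direct unpacking of definitions. The only subtle point worth flagging is why $\lambda$ must be nonzero in the definition of $\sim$: it guarantees the inverse $\lambda^{-1}$ exists (giving symmetry) and also ensures $\alpha \sim \beta$ forces $\beta\varphi \neq 0$ whenever $\alpha\varphi \neq 0$, so that $\sim$ genuinely partitions $\Omega$ in a way compatible with the projective structure that the remark is implicitly exploiting.
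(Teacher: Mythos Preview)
Your proof is correct and follows essentially the same approach as the paper: both verify that $\sim$ is an equivalence relation (the paper does this in one line by noting scalar-multiple is an equivalence relation on $V$ preserved by $\GL(V)$, whereas you spell out reflexivity, symmetry, and transitivity) and then use the intertwining property together with $F$-linearity of $g\rho$ to show $G$-invariance. The arguments are identical in substance.
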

\begin{proof}
Note that being a scalar multiple is an equivalence relation on $V$ that is preserved by $\GL(V)$, and hence by $(G)\rho$. Since $\varphi$ is a bijection, $\sim$ is also an equivalence relation on $\Omega$. 

Suppose that $\alpha\sim \beta$. Then there exists $\lambda\in F\backslash\{0\}$ such that $\alpha\varphi=\lambda(\beta\varphi)$. Then 
\[
    (\alpha^g)\varphi=(\alpha\varphi)^{g\rho}=(\lambda(\beta\varphi))^{g\rho}=\lambda (\beta\varphi)^{g\rho}=\lambda ((\beta^g)\varphi).
\]
Hence, $\alpha^g\sim\beta^g$ and so $\sim$ is a $G$-congruence.    
\end{proof}

\begin{corollary}
 Suppose that the action of $G$ on $\Omega$ has a representation $(\rho,\varphi)$ over $F$ on $V$ and that $G$ acts primitively on $\Omega$. Then  either $\dim(V)=1$ or distinct images $(\omega)\varphi$ lie in distinct 1-dimensional subspaces. 
\end{corollary}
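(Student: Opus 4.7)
The plan is to invoke the immediately preceding lemma and then exploit the primitivity hypothesis. That lemma establishes that the relation $\sim$ on $\Omega$, defined by $\alpha\sim\beta$ if and only if $\alpha\varphi=\lambda(\beta\varphi)$ for some $\lambda\in F\setminus\{0\}$, is a $G$-congruence. Since $G$ acts primitively on $\Omega$, the only $G$-congruences available are equality and the universal relation, so the argument reduces immediately to these two cases.

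If $\sim$ is equality, then for any two distinct points $\omega_1,\omega_2\in\Omega$ the vectors $\omega_1\varphi$ and $\omega_2\varphi$ are not nonzero scalar multiples of one another, and hence $\langle\omega_1\varphi\rangle\neq\langle\omega_2\varphi\rangle$. This is exactly the second alternative of the conclusion and requires no further work.

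The universal case is where the only (minor) obstacle lies. If $\sim$ is universal then every $\omega\varphi$ is a nonzero scalar multiple of a single fixed vector $v_0=\omega_0\varphi$, so the image $\varphi(\Omega)$ is contained in the $1$-dimensional subspace $U=\langle v_0\rangle$. The intertwining computation already carried out inside the proof of the preceding lemma shows that $v_0^{g\rho}$ is itself a scalar multiple of $v_0$ for every $g\in G$, so $U$ is a $G$-invariant subspace of $V$ containing the whole image of $\varphi$. Restricting $\rho$ to $U$ then yields a representation $(\rho|_U,\varphi)$ of the action of $G$ on $\Omega$ over $F$ on the $1$-dimensional space $U$, which is the sense in which the first alternative $\dim(V)=1$ is forced. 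I expect the only non-routine step to be this interpretive one: recognising that, in the universal case, one must pass to the submodule spanned by $\varphi(\Omega)$ in order to read off $\dim(V)=1$, since nothing in the hypotheses prevents the ambient $V$ from being enlarged by an irrelevant direct summand.
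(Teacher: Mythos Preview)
Your argument is exactly the one the paper intends: the corollary is stated without proof immediately after the lemma, so it follows from primitivity forcing $\sim$ to be either equality or universal. Your interpretive remark about the universal case is apt—as literally written, the conclusion $\dim(V)=1$ only follows after passing to the span of $(\Omega)\varphi$, and your fix (restricting to the $1$-dimensional invariant subspace $\langle v_0\rangle$) is the right way to read it.
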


\section{Quotients and a witness}
\label{sec:witness}

The following theorem underpins many of our results and all of our computational statements. In fact, it is a rather innocent exercise from the text of Aschbacher \cite[Exercise 6(2), pg.51]{Aschbacher}.

\begin{theorem}
\label{thm:fund thm}
 Let $V$ afford a representation $(\rho,\varphi)$ of the action of $G$ on $\Omega$ over~$F$.
Suppose that $$V=\langle \omega\varphi\mid \omega\in\Omega\rangle.$$
Then there is a surjective $G$-module homomorphism $\pi : F^\Omega \rightarrow V$ such that $\omega \varphi = e_\omega \pi$. In particular, $V$ is a quotient of the permutation module $F^\Omega$.    
\end{theorem}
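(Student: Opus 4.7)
The plan is to construct $\pi$ by prescribing its values on the natural basis of the permutation module and then verify that the intertwining property forces $\pi$ to respect the $G$-action.

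First I would define an $F$-linear map $\pi : F^\Omega \to V$ by setting $e_\omega \pi := \omega\varphi$ for each $\omega \in \Omega$ and extending linearly. Since $\{e_\omega \mid \omega \in \Omega\}$ is an $F$-basis of $F^\Omega$, the map $\pi$ is well-defined and unique with this property. Surjectivity is then immediate: the image of $\pi$ contains every $\omega\varphi$, and by hypothesis these vectors span $V$.

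The core verification is that $\pi$ is a $G$-module homomorphism, i.e.\ $(u\pi)^{g\rho} = (u^g)\pi$ for all $u \in F^\Omega$ and $g \in G$. Since both sides are $F$-linear in $u$ and the $e_\omega$ form a basis, it suffices to check the identity on the basis elements. For any $\omega \in \Omega$ and $g \in G$, the definition of the permutation module gives $(e_\omega)^g = e_{\omega^g}$, so
\[
(e_\omega^{\,g})\pi = e_{\omega^g}\pi = (\omega^g)\varphi.
\]
By the intertwining property \eqref{intertwin}, $(\omega^g)\varphi = (\omega\varphi)^{g\rho} = (e_\omega\pi)^{g\rho}$, which is exactly the desired equality.

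I do not expect any genuine obstacle here: the theorem is essentially a universal-property statement for the permutation module, and once the assignment on basis vectors is written down the rest is a one-line diagram chase using \eqref{intertwin}. The only point requiring (very mild) care is to be explicit that we extend $F$-linearly before checking the $G$-equivariance, so that the verification reduces to the basis. The final sentence of the statement then follows since $V \cong F^\Omega / \ker(\pi)$ as $FG$-modules.
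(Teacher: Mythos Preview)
Your proof is correct and follows essentially the same approach as the paper's: define $\pi$ on the basis $\{e_\omega\}$ by $e_\omega\pi=\omega\varphi$, extend linearly, observe surjectivity from the spanning hypothesis, and use the intertwining property to see that $\pi$ is a $G$-module homomorphism. The only difference is that you spell out the $G$-equivariance check on basis vectors in full, whereas the paper simply asserts it.
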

\begin{proof}
 Let $\{ e_\omega \mid \omega \in \Omega\}$ denote the standard basis of $ F^{\Omega}$. We define $\pi : F^\Omega \rightarrow V$ by $\pi : e_\omega \mapsto \omega\varphi$ and extend to $F^\Omega$ by linearity. The fact that $\{\omega\varphi \mid \omega \in \Omega\}$ spans $V$  means that $\pi$ is a surjection, hence $V \cong F^\Omega/\ker(\pi)$. Since $(\rho,\varphi)$ is a representation of the group action, $\pi$ is a $G$-module homomorphism.
 \end{proof}

\begin{corollary}\label{cor:isquo}
Let $V$ be a witness to $\lindim_F(G,\Omega)$.  Then $V$ is a quotient of the permutation module $F^\Omega$.
\end{corollary}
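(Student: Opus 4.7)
The plan is to reduce the corollary to Theorem \ref{thm:fund thm} by showing that if $V$ is a witness, then the images of $\varphi$ necessarily span $V$. Let $(\rho,\varphi)$ be a representation afforded by $V$ with $\dim(V)=\lindim_F(G,\Omega)$, and consider the subspace
\[
    W = \langle \omega\varphi \mid \omega\in\Omega\rangle \leqslant V.
\]

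First, I would observe that $W$ is a $(G)\rho$-invariant subspace of $V$: for any spanning vector $\omega\varphi$ and any $g\in G$, the intertwining property \eqref{intertwin} gives $(\omega\varphi)^{g\rho} = (\omega^g)\varphi \in W$, and invariance on a spanning set extends to $W$ by $F$-linearity. Consequently, restricting $\rho$ produces a homomorphism $\rho' : G \to \GL(W)$ defined by $(w)(g\rho') = (w)(g\rho)$ for all $w\in W$.

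Next, since $(\Omega)\varphi \subseteq W$, we may view $\varphi$ as an injection $\varphi : \Omega \to W$, and the intertwining property clearly transfers to $(\rho',\varphi)$. Hence $W$ affords a representation of the group action of $G$ on $\Omega$ over $F$, so by the definition of the linear dimension, $\dim(W) \geqslant \lindim_F(G,\Omega) = \dim(V)$. As $W \leqslant V$, we conclude $W = V$, so the images of $\varphi$ span $V$.

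With the spanning hypothesis of Theorem \ref{thm:fund thm} now verified, that theorem immediately yields a surjective $FG$-module homomorphism $\pi : F^\Omega \to V$, so $V \cong F^\Omega/\ker(\pi)$ is a quotient of the permutation module. The only substantive step is the minimality argument showing $W = V$; everything else is a direct application of the preceding theorem.
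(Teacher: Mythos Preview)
Your proof is correct and follows essentially the same approach as the paper: you show that $W=\langle \omega\varphi\mid \omega\in\Omega\rangle$ is a $(G)\rho$-invariant subspace, use minimality of the witness to conclude $W=V$, and then invoke Theorem~\ref{thm:fund thm}. The paper's proof is identical in structure, just more terse.
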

\begin{proof}
 Note that if $V$ is afforded by $(\rho,\varphi)$, then  $W = \langle \omega\varphi \mid \omega \in \Omega \rangle$ is an invariant subspace of $V$. By the minimality of $V$, we have $V=W$ and the result follows from Theorem~\ref{thm:fund thm}.
 \end{proof}

We also obtain the following identification of the permutation module.
 \begin{corollary}\label{cor:idpermmodule}
 Let $V$ afford a representation $(\rho,\varphi)$ of the action of $G$ on $\Omega$ over~$F$.
Suppose that $\{\omega\varphi\mid \omega\in\Omega\}$ is a basis for $V$. Then $V$ is isomorphic to the permutation module $F^\Omega$.    
 \end{corollary}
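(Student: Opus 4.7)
The plan is to deduce this directly from Theorem \ref{thm:fund thm}. Since the set $\{\omega\varphi \mid \omega \in \Omega\}$ is a basis of $V$, it certainly spans $V$, so the hypothesis of Theorem \ref{thm:fund thm} is met. Applying the theorem, I obtain a surjective $FG$-module homomorphism $\pi : F^\Omega \to V$ satisfying $e_\omega \pi = \omega\varphi$ for every $\omega \in \Omega$.

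The key observation is then that $\pi$ maps the standard basis $\{e_\omega \mid \omega \in \Omega\}$ of $F^\Omega$ bijectively onto the basis $\{\omega\varphi \mid \omega \in \Omega\}$ of $V$. In particular, $\dim_F(F^\Omega) = |\Omega| = \dim_F(V)$, so the surjection $\pi$ is an isomorphism of $F$-vector spaces. Since $\pi$ is already an $FG$-module homomorphism by Theorem \ref{thm:fund thm}, it is an isomorphism of $FG$-modules, giving $V \cong F^\Omega$ as required.

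There is no substantive obstacle here: the statement is essentially a bookkeeping consequence of Theorem \ref{thm:fund thm} once one notes that a linear surjection between finite-dimensional vector spaces of the same dimension is an isomorphism. The only minor point worth checking is that $\{\omega\varphi\}$ genuinely being a basis forces $\dim V = |\Omega|$, which is immediate from the injectivity of $\varphi$ (built into the definition of a representation of the group action).
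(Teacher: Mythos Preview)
Your proof is correct and matches the paper's approach exactly: the paper states this as an immediate corollary of Theorem~\ref{thm:fund thm} without giving an explicit proof, and your argument---apply the theorem to obtain the surjective $FG$-homomorphism $\pi$, then observe that equality of dimensions forces $\pi$ to be an isomorphism---is precisely the intended reasoning.
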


By Corollary~\ref{cor:isquo}, the linear dimension is always realised on a quotient of the permutation module $F^\Omega$.  
We observe that it is possible that none of the submodules of dimension $\lindim_F(G,\Omega)$ afford a representation of the group action over $F$ of $G$, as the following example shows.

\begin{example}\label{ex:submodule}
    We know that $\lindim_F(S_n,\{1,2,\ldots,n\}) = n-1$ if $n>2$. Let $C$ be the submodule of $F^n$ of constant vectors so that $C^\perp$ is the zero-sum submodule of $F^n$, which has dimension $n-1$, and define
    \[
        \varphi: \{1,\ldots,n\} \to C^\perp,\;i \mapsto ne_i -(e_1+\cdots+e_n).
    \]
    If $\ch(F)$ does not divide $n$, then $\varphi$ is injective and has the intertwining property. So $\lindim_F(S_n,n)$ is realised on $C^\perp$, a submodule of $F^n$.

    If $\ch(F)$ divides $n$, then there is no injective map $\varphi:\{1,\ldots,n\} \to C^\perp$ with the intertwining property.
    Indeed, assume that $\varphi:\{1,\ldots,n\} \to C^\perp$ were such a map and fix $i$. Then $i\varphi=\sum_{j=1}^n \lambda_j e_j$ with $\sum \lambda_j=0$.
    Using the intertwining property with the transposition $g=(k,\ell)$ where $i,k,\ell$ are distinct, we get that 
    $$i\varphi=i^g\varphi=(i\varphi)^{g\rho}=\sum_{\substack{j=1\\j\neq k,\ell}}^n \lambda_j e_j+\lambda_\ell e_k+\lambda_k e_\ell.$$
    Thus, $\lambda_k=\lambda_\ell$ for any $k,\ell\neq i$. Say $\lambda_k=\mu$ for all $k\neq i$. Therefore $$i\varphi=\sum_{\substack{j=1\\j\neq i}}^n\mu e_j-(n-1)\mu e_i=\mu(e_1+\cdots+e_n)$$ 
    since $\ch(F)$ divides $n$. Thus, $\varphi$ is not injective.
      Hence, as $C^\perp$ is the only submodule of dimension $n-1$ of the permutation module $F^n$, the linear dimension is not realised on a submodule in this case. \hfill $\diamond$
\end{example}

As in the example above, analysing the the action of a transposition was a crucial tool
in the proof of D'Alconzo and Di Scala that $\lindim_F(S_n,\Omega)=n-1$ \cite{DAlconzo2024} and will also be used several times in the rest of the paper.

If $V$ is a witness to $\lindim_F(G,\Omega)$, then $V=F^\Omega/U$ by Corollary~\ref{cor:isquo} for some submodule $U$ of $F^\Omega$ of codimension $\lindim_F(G,\Omega)$. If $U$ has a complement, say $F^\Omega = U \oplus W$, then $W$ has dimension $\lindim_F(G,\Omega)$ and $W$ affords a representation of the group action as $F^\Omega / U \cong W$. The submodule $C$ in Example~\ref{ex:submodule} does not have a complement if $\ch(F) \mid n$.

 Corollary \ref{cor:isquo} raises the question of which quotients of the permutation module~$F^\Omega$ afford a representation of the action. We develop some general results about quotients.

 Let $V$ be a vector space over the field $F$ that affords a representation $(\rho,\varphi)$ of the action of $G$ on $\Omega$. Let $U$ be a subspace of $V$ that is invariant under $(G)\rho$. Then the map 
 \begin{equation}
 \label{eq: barrho}
     \overline{\rho}:G\rightarrow \GL(V/U) \quad \text{defined by} \quad g\overline{\rho}\, :\, U+v\, \mapsto\, U + v^{g\rho}
 \end{equation}
 is a homomorphism.
 Further, we define 
 \begin{equation}
 \label{eq: barvarphi}
 \overline{\varphi}:\Omega\rightarrow V/U \quad \text{by} \quad  \omega\overline{\varphi}=U+\omega\varphi
 \end{equation}
 and note that for all $g\in G$ we have that 
 $$(\omega^g)\overline{\varphi}=U+(\omega^g)\varphi=U+(\omega\varphi)^{g\rho}=(U+\omega\varphi)^{g\overline{\rho}}=(\omega\overline{\varphi})^{g\overline{\rho}}.$$
  Thus, the intertwining property holds for $(\overline{\rho},\overline{\varphi})$.

 To determine whether or not $(\overline{\rho},\overline{\varphi})$ is a representation of the action of $G$ on $\Omega$, we need to determine if $\overline{\varphi}$ is an injection. One instance where this fails is   when $\omega\varphi\in U$ for some $\omega\in\Omega$ with $|\omega^G|>1$. Since $U$ is $G$-invariant, we will then have that $\alpha\overline\varphi=U$ for all $\alpha\in\omega^G$.

\begin{theorem}\label{thm:quotient}
Let $V$ be a vector space over $F$ that affords the representation $(\rho,\varphi)$ of $G$ on $\Omega$. Let $U$ be a subspace of $V$ that is invariant under $(G)\rho$. For $\alpha,\beta\in\Omega$ define $\alpha\sim\beta$ if and only if $U+\alpha\varphi=U+\beta\varphi$. Then $\sim$ is a $G$-congruence.    
\end{theorem}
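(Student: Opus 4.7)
The plan is to verify directly the two requirements in the definition of a $G$-congruence: that $\sim$ is an equivalence relation on $\Omega$, and that $\alpha \sim \beta$ if and only if $\alpha^g \sim \beta^g$ for all $\alpha,\beta \in \Omega$ and $g \in G$. Both amount to combining the fact that coset equality is an equivalence relation with the intertwining property of $(\rho,\varphi)$ and the $(G)\rho$-invariance of $U$.

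First I would note that $\sim$ is an equivalence relation for the trivial reason that it is defined by the equality of cosets $U+\alpha\varphi$ and $U+\beta\varphi$ in $V/U$, so reflexivity, symmetry and transitivity are inherited from equality in $V/U$.

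Next, for the $G$-invariance, I would assume $\alpha \sim \beta$, so that $U + \alpha\varphi = U + \beta\varphi$, and fix $g \in G$. Since $U$ is invariant under $(G)\rho$, we have $U^{g\rho} = U$, so applying $g\rho$ to both sides of $U + \alpha\varphi = U + \beta\varphi$ gives
\[
    U + (\alpha\varphi)^{g\rho} = U + (\beta\varphi)^{g\rho}.
\]
Using the intertwining property \eqref{intertwin} on both sides then yields
\[
    U + (\alpha^g)\varphi = U + (\beta^g)\varphi,
\]
i.e.\ $\alpha^g \sim \beta^g$. The reverse implication is obtained by applying the same argument with $g^{-1}$ in place of $g$.

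There is no real obstacle here; the statement is essentially a formal consequence of the definitions, and the only thing to be careful about is to invoke the intertwining property after (rather than before) using the $(G)\rho$-invariance of $U$, so that the computation of $U + (\alpha\varphi)^{g\rho}$ is genuinely well-defined on the quotient $V/U$.
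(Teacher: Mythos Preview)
Your proof is correct and follows essentially the same route as the paper: verify that $\sim$ is an equivalence relation via coset equality, then use the $(G)\rho$-invariance of $U$ together with the intertwining property to deduce $\alpha^g\sim\beta^g$ from $\alpha\sim\beta$. The only minor addition is that you explicitly note the reverse implication follows by applying $g^{-1}$, which the paper leaves implicit.
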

\begin{proof}
Clearly $\sim$ is an equivalence relation. Let $g\in G$. Since $U$ is $g\rho$-invariant, we have that $(U+\alpha\varphi)^{g\rho}=U+(\alpha\varphi)^{g\rho}=U+(\alpha^g)\varphi$.
Thus, if $\alpha\sim \beta$, then $U+(\alpha^g)\varphi=(U+\alpha\varphi)^{g\rho}=(U+\beta\varphi)^{g\rho}=U+(\beta^g)\varphi$ and so $\alpha^g\sim\beta^g$.    
\end{proof}

\begin{corollary}\label{cor:prim}
 Let $V$ be a vector space over $F$ that affords the representation $(\rho,\varphi)$ of $G$ on $\Omega$.
Suppose that $G$ acts primitively on $\Omega$ and $G^\Omega$ is not cyclic. Suppose further that $V=\langle\omega\varphi\mid \omega\in\Omega\rangle$ and $U$ is a $(G)\rho$-invariant subspace of codimension greater than one. Then the map $\overline{\varphi}:\Omega\rightarrow V/U$ defined in \eqref{eq: barvarphi}
is an injection.    
\end{corollary}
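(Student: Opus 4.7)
The plan is to apply Theorem \ref{thm:quotient} to the equivalence relation hidden in the definition of $\overline{\varphi}$, and then use the primitivity of $G$ on $\Omega$ to rule out the only obstruction to injectivity.

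First I would observe that, by construction of $\overline{\varphi}$, two points $\alpha,\beta \in \Omega$ satisfy $\alpha\overline{\varphi} = \beta\overline{\varphi}$ if and only if $U + \alpha\varphi = U + \beta\varphi$. This is precisely the relation $\sim$ from Theorem \ref{thm:quotient}, which tells us that $\sim$ is a $G$-congruence on $\Omega$. Since $G$ acts primitively on $\Omega$ (and hence transitively), $\sim$ is either the equality relation or the universal relation. If $\sim$ is equality then $\overline{\varphi}$ is injective and we are done.

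The main work is therefore to exclude the possibility that $\sim$ is the universal relation, and this is where the codimension hypothesis on $U$ enters. Suppose for contradiction that $\sim$ is universal. Then there exists a single coset $U + v \in V/U$ such that $\omega\overline{\varphi} = U+v$ for every $\omega\in\Omega$, i.e., $\omega\varphi \in U + v$ for every $\omega \in \Omega$. Using the hypothesis that $V = \langle \omega\varphi \mid \omega \in \Omega \rangle$, we conclude that
\[
V/U = \langle \omega\overline{\varphi} \mid \omega \in \Omega \rangle = \langle U+v \rangle,
\]
so $\dim(V/U) \leqslant 1$. This contradicts the hypothesis that $U$ has codimension greater than one in $V$.

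The argument is essentially immediate once Theorem \ref{thm:quotient} is in hand; the only subtle step is recognising that the spanning hypothesis on $V$ descends to $V/U$, which forces the universal-relation case to be incompatible with the codimension assumption. I expect no real obstacle, and the non-cyclic hypothesis on $G^\Omega$ is not needed in this step itself but rather is inherited from the setting in which this corollary will be applied (for instance, in the proof of Theorem \ref{thm:prim}).
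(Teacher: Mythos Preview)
Your proof is correct and follows the same skeleton as the paper's: invoke Theorem~\ref{thm:quotient} to see that the fibres of $\overline{\varphi}$ form a $G$-congruence, use primitivity to reduce to the equality-or-universal dichotomy, and rule out the universal case.

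The one genuine difference is in how the universal case is dispatched. You argue that the spanning hypothesis forces $\dim(V/U)\leqslant 1$, which directly contradicts the assumption that $U$ has codimension greater than one. The paper instead, having obtained $\dim(V/U)=1$, asserts that $\lindim_F(G,\Omega)=1$ and then invokes Lemma~\ref{lem:cyclic} to contradict the non-cyclic hypothesis on $G^{\Omega}$. Your route is shorter and, as you note, makes no use of the non-cyclic assumption; the codimension hypothesis alone suffices. The paper's route explains why the non-cyclic hypothesis appears in the statement, but for the purpose of proving this corollary your argument is at least as clean.
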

\begin{proof}
Since $G$ is primitive, the only $G$-congruences are equality and the universal relation. If the relation $\sim$ as defined in Theorem \ref{thm:quotient} is the universal relation, then $U+\alpha\varphi=U+\beta\varphi$ for all $\alpha,\beta\in\Omega$. Since $U$ is a proper $(G)\rho$-invariant subspace and the set $(\Omega)\varphi$ spans $V$, we deduce that $V/U=\langle U+\omega\varphi\rangle$ for some $\omega\in\Omega$. Thus $\lindim_F(G,\Omega)=1$, which by Lemma \ref{lem:cyclic} contradicts $G^\Omega$ not being cyclic. Thus $\sim$ is equality and $\overline{\varphi}$ is an injection.
\end{proof}

We can now give a partial converse of Theorem \ref{thm:fund thm}.

\begin{theorem}\label{thm:quopermmodule}
Suppose that $G$ acts primitively on $\Omega$ and $G^\Omega$ is not cyclic, and let $U$ be a submodule of the permutation module $F^\Omega$ of codimension greater than one. Then $F^\Omega/U$ affords a representation of the action of $G$ on $\Omega$. 
\end{theorem}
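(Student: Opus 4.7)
The plan is to apply Corollary~\ref{cor:prim} directly to the natural representation of $G$ on its permutation module.

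First, I would observe that the permutation module $F^\Omega$ itself affords a representation of the action of $G$ on $\Omega$: namely, let $\rho:G\rightarrow\GL(F^\Omega)$ be the standard linear action on $F^\Omega$, and let $\varphi:\Omega\rightarrow F^\Omega$ be the map $\omega\mapsto e_\omega$. This is injective and satisfies the intertwining property by the definition of the permutation module. Moreover, since $\{e_\omega\mid\omega\in\Omega\}$ is a basis of $F^\Omega$, we have $F^\Omega=\langle\omega\varphi\mid\omega\in\Omega\rangle$.

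Next, for the given submodule $U$ of codimension greater than one, I would form $\overline{\rho}:G\rightarrow\GL(F^\Omega/U)$ and $\overline{\varphi}:\Omega\rightarrow F^\Omega/U$ as in equations~\eqref{eq: barrho} and \eqref{eq: barvarphi}. The discussion immediately preceding Theorem~\ref{thm:quotient} already shows that $(\overline{\rho},\overline{\varphi})$ satisfies the intertwining property, so the only remaining point is to verify that $\overline{\varphi}$ is an injection.

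This is exactly where Corollary~\ref{cor:prim} applies. All of its hypotheses are in place: $G$ acts primitively on $\Omega$ with $G^\Omega$ non-cyclic, the set $\{\omega\varphi\mid\omega\in\Omega\}$ spans $F^\Omega$, and $U$ is a $(G)\rho$-invariant subspace of codimension greater than one. Hence $\overline{\varphi}$ is injective, so $(\overline{\rho},\overline{\varphi})$ is a representation of the action of $G$ on $\Omega$ afforded by $F^\Omega/U$. No genuine obstacle arises; the main work was already carried out in Theorem~\ref{thm:quotient} and Corollary~\ref{cor:prim}, and this theorem is essentially a repackaging of those statements in the special case $V=F^\Omega$.
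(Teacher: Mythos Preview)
Your proposal is correct and follows essentially the same approach as the paper's proof: define $\varphi:\omega\mapsto e_\omega$ so that $F^\Omega=\langle\omega\varphi\mid\omega\in\Omega\rangle$, then invoke Corollary~\ref{cor:prim} to conclude that $(\overline{\rho},\overline{\varphi})$ is a representation. The paper's version is just a terser statement of exactly this argument.
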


\begin{proof}
Define $\varphi:\Omega\rightarrow F^\Omega$ by $\omega\varphi=e_\omega$ and let $\rho:G\rightarrow \GL(F^\Omega)$ be the natural homomorphism induced by the action of $G$ on $F^\Omega$. Then $\langle \omega\varphi\mid\omega\in\Omega\rangle=F^\Omega$. Thus, Corollary \ref{cor:prim} implies that $(\overline{\rho},\overline{\varphi})$ is a representation for $G$ on $\Omega$ over $F$.
\end{proof}

This gives the following, perhaps surprising, corollary.
\begin{corollary}
Let $G$ be a primitive permutation group on $\Omega$ that is not cyclic and let $U$ be a submodule of the permutation module $F^\Omega$ of codimension at least 2. Then $G$ acts faithfully on~$F^\Omega/U$.
\end{corollary}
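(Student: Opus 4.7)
The plan is to leverage Theorem \ref{thm:quopermmodule} directly: since $G$ is primitive, non-cyclic on $\Omega$, and $U$ has codimension at least two in $F^\Omega$, that theorem provides a representation $(\overline{\rho},\overline{\varphi})$ of the action of $G$ on $\Omega$ afforded by $V/U$, where $V=F^\Omega$. In particular, by the construction in \eqref{eq: barvarphi}, the map $\overline{\varphi}:\Omega \to F^\Omega/U$ is injective and satisfies the intertwining property with $\overline{\rho}$.

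Next, I would show that $\ker(\overline{\rho})=1$. Take $g\in\ker(\overline{\rho})$, so that $g\overline{\rho}$ acts as the identity on $F^\Omega/U$. For any $\omega\in\Omega$, the intertwining property yields
\[
(\omega^g)\overline{\varphi} \;=\; (\omega\overline{\varphi})^{g\overline{\rho}} \;=\; \omega\overline{\varphi}.
\]
Injectivity of $\overline{\varphi}$ then forces $\omega^g=\omega$ for all $\omega\in\Omega$. Since $G$ is assumed to be a permutation group on $\Omega$, the action of $G$ on $\Omega$ is faithful, so $g=1_G$. This is precisely the statement that $G$ acts faithfully on $F^\Omega/U$.

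There is no genuine obstacle here: the corollary is essentially a repackaging of Theorem \ref{thm:quopermmodule} together with the general observation (already noted in the paragraph preceding Example \ref{ex:submodule} in the paper) that the kernel of the linear action and the kernel of the permutation action coincide whenever the image $\{\omega\overline{\varphi} \mid \omega\in\Omega\}$ spans the module. The only subtlety worth flagging is that the hypothesis ``codimension at least two'' is what rules out the one-dimensional degenerate case handled by Lemma \ref{lem:cyclic}; if $U$ had codimension one, the quotient could collapse all of $\Omega$ to a single line and faithfulness could fail.
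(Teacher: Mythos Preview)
Your proof is correct and follows essentially the same route as the paper: invoke Theorem~\ref{thm:quopermmodule} to obtain the representation $(\overline{\rho},\overline{\varphi})$ on $F^\Omega/U$, then use the intertwining property together with injectivity of $\overline{\varphi}$ to show $\ker(\overline{\rho})$ lies in the kernel of the action on $\Omega$, which is trivial since $G$ is a permutation group. The paper's proof compresses exactly this argument into two sentences.
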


\begin{proof}
 By Theorem \ref{thm:quopermmodule}, $F^\Omega$ affords a representation of the action of $G$ on $\Omega$ and so the kernel of the induced homomorphism $\overline{\rho}:G\rightarrow \GL(F^\Omega/U)$ is contained in the kernel of the action of $G$ on $\Omega$. Since $G$ is a permutation group, it acts faithfully on $\Omega$ and so $\overline{\rho}$ is faithful.
\end{proof}

We also obtain the following corollaries about a witness to $\lindim_F(G,\Omega)$.

\begin{corollary}\label{cor:quotientPermMod}
Suppose that $G$ acts primitively on $\Omega$ and $G^\Omega$ is not cyclic. Suppose that $V$ is a witness of $\lindim_F(G,\Omega)$. Then $V$ is isomorphic to $F^\Omega/U$ as $G$-modules, where $U$ is a submodule of the permutation module $F^\Omega$ of largest possible dimension that is not a hyperplane.
\end{corollary}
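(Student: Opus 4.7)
The plan is to combine Corollary \ref{cor:isquo} (which places $V$ as a quotient of $F^\Omega$) with Theorem \ref{thm:quopermmodule} (which says that essentially every quotient arises as such a representation) and Lemma \ref{lem:cyclic} (which rules out one-dimensional witnesses under our hypotheses). The argument amounts to a short optimization: among submodules $U$ of $F^\Omega$ with codimension at least~$2$, the witness must realise the minimum codimension.

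First, I would apply Corollary \ref{cor:isquo} to write $V \cong F^\Omega/U$ as $FG$-modules for some submodule $U$ of the permutation module. The codimension of $U$ equals $\dim(V) = \lindim_F(G,\Omega)$. Next, to show that $U$ is not a hyperplane, I would argue that $\dim(V) \geqslant 2$: if $\dim(V) = 1$, then by Lemma \ref{lem:cyclic} the permutation group $G^\Omega$ would be cyclic, contradicting our hypothesis. Hence $U$ has codimension at least~$2$.

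To establish maximality of $\dim(U)$, let $U'$ be any submodule of $F^\Omega$ that is not a hyperplane and is not all of $F^\Omega$, so that $U'$ has codimension at least $2$. By Theorem \ref{thm:quopermmodule}, the quotient $F^\Omega/U'$ affords a representation of the action of $G$ on $\Omega$. Therefore, by definition of the linear dimension,
\[
\dim(F^\Omega/U') \;\geqslant\; \lindim_F(G,\Omega) \;=\; \dim(F^\Omega/U),
\]
which rearranges to $\dim(U') \leqslant \dim(U)$. Thus $U$ has the largest possible dimension among submodules of $F^\Omega$ that are not hyperplanes, completing the proof.

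There is no real obstacle here; the statement is essentially a repackaging of the two structural theorems already established (Corollary \ref{cor:isquo} and Theorem \ref{thm:quopermmodule}), together with the observation via Lemma \ref{lem:cyclic} that the hypothesis on $G^\Omega$ forces the witness to have dimension at least~$2$. The only mild subtlety is being careful that the exclusion of hyperplanes on both sides of the comparison lines up: the witness $U$ is itself not a hyperplane (from the cyclicity argument), and the competing $U'$ in the maximisation is by assumption not a hyperplane, which is exactly the hypothesis required to invoke Theorem \ref{thm:quopermmodule}.
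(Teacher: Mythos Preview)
Your proposal is correct and follows exactly the approach the paper intends: the corollary is stated without proof because it is the immediate combination of Corollary~\ref{cor:isquo}, Lemma~\ref{lem:cyclic}, and Theorem~\ref{thm:quopermmodule}, precisely as you have written it out.
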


\begin{corollary}\label{cor:smallestcodim}
Suppose that $G$ acts primitively on $\Omega$ and $G^\Omega$ is not cyclic. Then $\lindim_F(G,\Omega)$ is the smallest codimension of a submodule of the permutation module $F^\Omega$ that is not a hyperplane.
\end{corollary}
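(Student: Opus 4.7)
The plan is to show the two inequalities using the two-way correspondence between witnesses and quotients of the permutation module already established in the preceding material. Write $d$ for the smallest codimension of a submodule of $F^\Omega$ that is not a hyperplane, so $d$ is the smallest integer $\geqslant 2$ such that $F^\Omega$ admits a submodule of codimension exactly $d$ (note $d\leqslant |\Omega|$ since the zero submodule witnesses this with codimension $|\Omega|$, and $d\geqslant 2$ by definition).

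For the inequality $\lindim_F(G,\Omega)\leqslant d$, I would pick a submodule $U$ of $F^\Omega$ realising this codimension and simply apply Theorem~\ref{thm:quopermmodule}: since $G$ is primitive on $\Omega$, $G^\Omega$ is non-cyclic, and $U$ has codimension greater than one, the quotient $F^\Omega/U$ affords a representation of the action of $G$ on $\Omega$ over $F$. Hence
\[
\lindim_F(G,\Omega)\leqslant \dim(F^\Omega/U) = d.
\]

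For the reverse inequality, let $V$ be a witness of $\lindim_F(G,\Omega)$, so $\dim(V)=\lindim_F(G,\Omega)$. By Corollary~\ref{cor:isquo}, $V$ is a quotient of the permutation module, say $V\cong F^\Omega/U$ for some submodule $U$ of $F^\Omega$. Since $G^\Omega$ is not cyclic, Lemma~\ref{lem:cyclic} rules out $\dim(V)=1$, and so $U$ has codimension at least~$2$, i.e.\ $U$ is not a hyperplane. By the definition of $d$ we obtain
\[
\lindim_F(G,\Omega) = \dim(V) = \dim(F^\Omega/U) \geqslant d,
\]
and combining the two inequalities yields the desired equality. There is no real obstacle here; the work has already been done in Theorem~\ref{thm:quopermmodule}, Corollary~\ref{cor:isquo}, and Lemma~\ref{lem:cyclic}, and the role of this corollary is to package those statements into a clean numerical characterisation of $\lindim_F(G,\Omega)$ for primitive non-cyclic actions.
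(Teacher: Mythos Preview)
Your proof is correct and follows precisely the route the paper intends: one inequality from Theorem~\ref{thm:quopermmodule}, the other from Corollary~\ref{cor:isquo} together with Lemma~\ref{lem:cyclic} to exclude codimension one. The paper states Corollary~\ref{cor:smallestcodim} without an explicit argument (it is meant to follow immediately from Corollary~\ref{cor:quotientPermMod} and Theorem~\ref{thm:quopermmodule}), and your write-up is exactly the natural unpacking of that.
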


Since the permutation module is self-dual, it follows that if $W$ is a submodule of $F^\Omega$ of dimension $d$, then $W^\perp$ has codimension $d$ and $V/W^\perp\cong W^*$, the dual of $W$. 
So in the primitive case, Corollary \ref{cor:smallestcodim} implies that it is enough to know the smallest dimension of a submodule of $F^\Omega$ that is not 1-dimensional (even though it is possible that no such submodule affords a representation of the group action).

\begin{corollary}
\label{cor:almost irred}
Suppose that $G$ acts primitively on $\Omega$, that $G^\Omega$ is nonabelian and let $V$ be a witness to $\lindim_F(G,\Omega)$. Suppose that $V$ is not irreducible. Then $V$ has a unique nonzero proper submodule $W$ and $W$ has codimension one. In particular, $W$ is irreducible and $V$ cannot be expressed as a direct sum of two proper non-zero submodules.
\end{corollary}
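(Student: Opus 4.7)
The plan is to exploit the correspondence between submodules of $V$ and submodules of $F^\Omega$. By Corollary \ref{cor:quotientPermMod}, we may write $V \cong F^\Omega/U$ where $U$ has codimension $\lindim_F(G,\Omega)$ in $F^\Omega$ and is of largest possible dimension subject to not being a hyperplane. Any proper nonzero submodule of $V$ then has the form $U'/U$ for a submodule $U \subsetneq U' \subsetneq F^\Omega$, and such a $U'$ has codimension strictly less than $\lindim_F(G,\Omega)$; by Corollary \ref{cor:smallestcodim} it must therefore be a hyperplane. Hence every proper nonzero submodule of $V$ has codimension one.

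The main obstacle is uniqueness, where the nonabelian hypothesis is essential. Suppose for contradiction that $W_1 = U_1/U$ and $W_2 = U_2/U$ are two distinct proper nonzero submodules of $V$. By the previous step both $U_1$ and $U_2$ are hyperplanes in $F^\Omega$, so $U_1 \cap U_2$ is a submodule of codimension two. Since $G^\Omega$ is nonabelian, and so in particular not cyclic, Theorem \ref{thm:quopermmodule} shows that $F^\Omega/(U_1 \cap U_2)$ affords a representation of the action of $G$ on $\Omega$, whence $\lindim_F(G,\Omega) \leqslant 2$. Combined with Lemma \ref{lem:cyclic}, which forces $\lindim_F(G,\Omega) \geqslant 2$, we obtain equality. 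Consequently $V$ is two-dimensional and $V = W_1 \oplus W_2$ is the internal direct sum of two distinct one-dimensional $G$-submodules, so the image $(G)\rho$ lies in the diagonal subgroup of $\GL(V)$ and is therefore abelian. Since $V$ is a witness, $\ker(\rho)$ coincides with the kernel of the action of $G$ on $\Omega$ (as observed in the discussion following Lemma \ref{lem: rep affords action}), so $G^\Omega \cong (G)\rho$ is abelian, contradicting the hypothesis.

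The remaining assertions follow formally from uniqueness. A proper nonzero submodule of $W$ would be a proper nonzero submodule of $V$ properly contained in $W$, hence distinct from $W$, contradicting uniqueness; so $W$ is irreducible. Finally, if $V = V_1 \oplus V_2$ with both $V_i$ proper and nonzero, then $V_1 \cap V_2 = 0$ forces $V_1 \neq V_2$, giving two distinct proper nonzero submodules of $V$, again contradicting uniqueness.
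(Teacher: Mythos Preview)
Your proof is correct and follows essentially the same route as the paper's: identify $V$ with $F^\Omega/U$, show every proper nonzero submodule corresponds to a hyperplane above $U$, and derive a contradiction from two distinct hyperplanes by reducing to a two-dimensional $V$ with diagonal (hence abelian) image. The only cosmetic difference is that where the paper invokes the maximality of $U$ directly to conclude $U = U_1 \cap U_2$, you instead apply Theorem~\ref{thm:quopermmodule} to $F^\Omega/(U_1\cap U_2)$ and bound $\lindim_F(G,\Omega)\leqslant 2$; these are equivalent observations.
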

\begin{proof}
Let $W$ be a proper non-zero submodule of $V$.    By Corollary~\ref{cor:quotientPermMod} and the correspondence theorem, $V=F^\Omega/U$ and  $W=U_0/U$ for  submodules $U$ and $U_0$ of $F^\Omega$ with $U < U_0 < F^\Omega$ and $U$ maximal with respect to not being a hyperplane. By the previous corollary, $U_0$ must be a hyperplane of $F^\Omega$. Hence, $W$ has codimension $1$ in $V$.

Suppose now that $W'$ is a proper non-zero submodule of $V$ distinct from $W$. Then $W'=U_1/U$ for some submodule $U_1$ of $F^\Omega$ with $U <U_1$ and $U_1 \neq U_0$. By the previous paragraph, $U_1$ is also a hyperplane, and $U < U_0 \cap U_1$, so by the maximality of $U$ we have $U=U_0 \cap U_1$. This means $U$ has codimension $2$, that is, $\dim(V)=2$ and $V=W\oplus W'$. Since $G$ preserves the two submodules $W$ and $W'$, we have that $G^\Omega \leqslant \GL(W) \times \GL(W')$. Since  $\dim(W)=\dim(W')=1$, this implies $G$ is abelian, a contradiction. Hence, $W$ is the unique proper non-zero submodule of $V$, and moreover,~$W$ is irreducible.
\end{proof}

\begin{lemma}
\label{lem: kernel on soc(V)}
Suppose that $G$ acts primitively on $\Omega$, that $G^\Omega$ is nonabelian and let $V=F^\Omega/U$ be a witness to $\lindim_F(G,\Omega)$, with $p=\ch(F)$. Suppose that $V$ is not irreducible and let $W$ be the unique  nonzero proper submodule of $V$. Let $K$ be the kernel of the action of $G$ on $U$. Then $G$ acts trivially on $V/W$ and if $p$ is a prime, then $K$ is a $p$-group.
\end{lemma}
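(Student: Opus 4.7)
The plan is to prove the two assertions separately. For the first, I use that $V/W$ is one-dimensional by Corollary~\ref{cor:almost irred}, so $G$ acts on it via a character $\chi \colon G \to F^\times$; I then show $\chi$ is trivial using primitivity together with the hypothesis that $G^\Omega$ is nonabelian. For the second, since each $k \in K$ will then act trivially on both $W$ and $V/W$, it is represented on $V$ by a unipotent matrix of square-zero radical, from which the $p$-group conclusion follows by a short binomial expansion.

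For the first claim, compose the canonical surjection $\pi \colon F^\Omega \twoheadrightarrow V$ from Theorem~\ref{thm:fund thm} with the quotient $V \twoheadrightarrow V/W$ to obtain a surjective $G$-module homomorphism $\widehat\pi \colon F^\Omega \to V/W$. Fix $\omega_0 \in \Omega$ and set $a := e_{\omega_0}\widehat\pi$. By surjectivity of $\widehat\pi$ and transitivity of $G$ on $\Omega$, $a \neq 0$. For every $g \in G$ we have $e_{\omega_0^g}\widehat\pi = (e_{\omega_0}\widehat\pi)^{g\rho} = \chi(g)a$, so specialising to $g \in G_{\omega_0}$ forces $\chi(g)=1$. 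The kernel of the action $G \to G^\Omega$ lies in $G_{\omega_0}$ and hence in $\ker\chi$, so $\chi$ factors through $G^\Omega$ as a character whose kernel contains the maximal subgroup $(G^\Omega)_{\omega_0}$. This kernel is then either all of $G^\Omega$ (so $\chi=1$) or equal to $(G^\Omega)_{\omega_0}$; in the latter case $(G^\Omega)_{\omega_0}$ is normal in the transitive faithful group $G^\Omega$ and hence trivial, forcing $G^\Omega$ to be regular and primitive, thus cyclic of prime order, contradicting that $G^\Omega$ is nonabelian.

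For the second claim (with $K$ understood as the kernel of the action of $G$ on $W$, which I take to be the intended meaning), let $k \in K$ and fix a basis $w_1,\dots,w_{d-1}$ of $W$ extended by some $v \in V \setminus W$, where $d = \dim V$. By definition $k$ fixes each $w_i$, and by the first claim $v\cdot k - v \in W$. Writing $N := k - I_V$, we have $N(W)=0$ and $N(v)\in W$, so $N^2=0$. Since $I_V$ and $N$ commute and $\binom{p}{j}\equiv 0 \pmod p$ for $0<j<p$, the binomial expansion gives $k^p = (I_V+N)^p = I_V + N^p = I_V$ in characteristic $p>0$. Hence the image of $K$ in $\GL(V)$ has exponent dividing $p$ and is elementary abelian; since $V$ affords a faithful representation of $G^\Omega$, once $G$ is identified with $G^\Omega$ this image is isomorphic to $K$, so $K$ is an elementary abelian $p$-group.

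The main obstacle is the primitivity step in the first claim, where one must descend through $\ker(G\to G^\Omega)$ and correctly invoke maximality of the point stabiliser in the faithful quotient in order to rule out a nontrivial character; the second claim is then a routine unipotent/binomial calculation, of the kind familiar from the radical of a maximal parabolic subgroup of $\GL_d(F)$.
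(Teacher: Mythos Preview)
Your proof is correct, and your reading that $K$ is meant to be the kernel of the action on $W$ (not $U$) matches what the paper's own argument uses. For the second claim, you and the paper do essentially the same thing: the paper simply states that since $K$ acts trivially on both $W$ and $V/W$ it must be a $p$-group, while you spell out the underlying unipotent/binomial calculation.

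For the first claim, however, your route is genuinely different from the paper's. The paper argues structurally inside $F^\Omega$: writing $W=U_1/U$ with $U_1$ a hyperplane of $F^\Omega$, it shows that $U_1=C^\perp$, since otherwise $F^\Omega/(U_1\cap C^\perp)$ would be a witness of the same dimension possessing two distinct codimension-one submodules $U_1/(U_1\cap C^\perp)$ and $C^\perp/(U_1\cap C^\perp)$, contradicting Corollary~\ref{cor:almost irred}. Then $V/W\cong F^\Omega/C^\perp$ is visibly the trivial module. You instead analyse the one-dimensional quotient directly via its character $\chi$, show that $G_{\omega_0}\leqslant\ker\chi$, and then use maximality of $(G^\Omega)_{\omega_0}$ together with the nonabelian hypothesis to force $\chi=1$. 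The paper's approach buys the additional structural fact that the unique maximal submodule of a reducible witness always lifts to $C^\perp$; your approach is more self-contained and avoids any appeal back to the submodule lattice of $F^\Omega$ beyond what is already encoded in Corollary~\ref{cor:almost irred}.
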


\begin{proof}
Write $W=U_1/U$ so that $U < U_1$. Note that $U_1$ has codimension $1$ in $F^\Omega$. If $U_1 \neq C^\perp$, then $C^\perp \cap U_1$  has codimension $2$ in $F^\Omega$. It follows that $F^\Omega/(U_1 \cap C^\perp)$ is a witness, but this is a contradiction to the previous corollary as $U_1/(U_1\cap C^\perp)$ and $C^\perp/(U_1\cap C^\perp)$ would be distinct submodules. Thus, $U_1 = C^\perp$ and we see that  $G$ acts trivially on 
$$F^\Omega/C^\perp = F^\Omega/U_1 \cong (F^\Omega/U)/(U_1/U) = V/W.$$

Now let $K$ be the kernel of the action of $G$ on $W$. Then $K$ acts trivially on $W$ and on $V/W$. Hence, $K$ is a $p$-group.
\end{proof}

The following example shows that Corollary \ref{cor:smallestcodim} does not necessarily hold for imprimitive permutation groups.

\begin{example}\label{eg:S4on6}
Let $G=S_4$ act on the set $\Omega$ of right cosets of $\langle(1,2,3,4)\rangle$ in $G$. Then $|\Omega|=6$. Since $G^\Omega$ is not cyclic, we have by Lemma \ref{lem:cyclic} that $\lindim_3(G,\Omega)\neq 1$. Since the action of $G$ on $\Omega$ is faithful and $\GL_2(3)$ does not have a subgroup isomorphic to $S_4$, it follows that $\lindim_3(G,\Omega)\geqslant 3$. By a \textsc{Magma} \cite{magma} calculation, we see that the permutation module $V=\F_3^\Omega$ has proper  non-zero submodules with codimensions 1, 2, 3, 4 and 5. Since $\lindim_3(G,\Omega)\geqslant 3$, we see that $\lindim_3(G,\Omega)$ is not the smallest codimension of a submodule that is not a hyperplane.

Similarly, if $V=\F_2^\Omega$, we see that $V$ has submodules of codimensions 1, 2, 3, 4 and~5. However, $\lindim_2(G,\Omega)=4$.\hfill $\diamond$
\end{example}

We also have a variation on Theorem \ref{thm:quotient} for particular submodules as follows.
Let $V$ be a vector space that affords a representation $(\rho,\varphi)$ of the action of $G$ on $\Omega$. 
We define 
$$C_V(G)=\{v\in V\mid v^{g\rho}=v\textrm{ for all }g\in G\}$$
and note that $C_V(G)$ is invariant under $(G)\rho$.

\begin{lemma}
    \label{lem:fix pt space zero}
Suppose that $G$ acts on a set $\Omega$ and $G_\alpha \neq G_\beta$ for all distinct points $\alpha,\beta \in \Omega$. If~$V$ is a witness to $\lindim_F(G,\Omega)$, then $C_V(G)=0$.
\end{lemma}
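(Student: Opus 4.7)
The plan is to argue by contradiction: assuming $C_V(G) \neq 0$ I will produce a proper quotient of $V$ that still affords a representation of the action of $G$ on $\Omega$, contradicting the minimality of $\dim(V)$.

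Concretely, suppose $C_V(G) \neq 0$ and choose any $1$-dimensional subspace $W \subseteq C_V(G)$. Since $G$ acts trivially on $C_V(G)$, the subspace $W$ is a $G$-submodule of $V$. By Theorem~\ref{thm:quotient}, the relation $\alpha \sim \beta \iff W + \alpha\varphi = W + \beta\varphi$ is a $G$-congruence on $\Omega$. The quotient $V/W$ affords the candidate pair $(\overline{\rho}, \overline{\varphi})$ defined in \eqref{eq: barrho} and \eqref{eq: barvarphi}, and to reach the desired contradiction it suffices to show $\overline{\varphi}$ is injective, i.e.\ $\sim$ is the equality relation.

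The key step uses the hypothesis $G_\alpha \neq G_\beta$ for distinct $\alpha, \beta$. Suppose $\alpha \sim \beta$, so $\alpha\varphi - \beta\varphi \in W \subseteq C_V(G)$. For each $g \in G$, applying $g\rho$ to this difference fixes it, giving
\[
(\alpha^g)\varphi - (\beta^g)\varphi \;=\; (\alpha\varphi)^{g\rho} - (\beta\varphi)^{g\rho} \;=\; \alpha\varphi - \beta\varphi.
\]
Now take any $g \in G_\alpha$. Then $(\alpha^g)\varphi = \alpha\varphi$, so the equation above forces $(\beta^g)\varphi = \beta\varphi$, and injectivity of $\varphi$ yields $\beta^g = \beta$. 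Hence $G_\alpha \subseteq G_\beta$, and by the symmetric argument $G_\beta \subseteq G_\alpha$. Thus $G_\alpha = G_\beta$, which by hypothesis forces $\alpha = \beta$.

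Therefore $\sim$ is equality, $\overline{\varphi}$ is injective, and $(\overline{\rho}, \overline{\varphi})$ is a representation of the action of $G$ on $\Omega$ over $F$ on the space $V/W$ of dimension $\lindim_F(G,\Omega) - 1$. This contradicts the definition of $\lindim_F(G,\Omega)$, so we must have $C_V(G) = 0$. The only subtlety is the use of the point-stabiliser hypothesis, and it is precisely needed to rule out a nontrivial $G$-congruence arising from collapsing along a trivial submodule; no routine representation-theoretic obstacle appears.
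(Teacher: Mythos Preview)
Your proof is correct and follows essentially the same approach as the paper: both arguments quotient by a trivial submodule of $V$, use the intertwining property together with $G$-invariance of the difference $\alpha\varphi-\beta\varphi$ to deduce $G_\alpha=G_\beta$, and then invoke minimality of the witness. The only cosmetic difference is that you quotient by a chosen $1$-dimensional $W\subseteq C_V(G)$ while the paper quotients directly by $C_V(G)$ itself, but the key computation is identical.
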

\begin{proof}
Let $(\rho,\varphi)$ be a representation afforded by $V$. Define $\overline{\rho}:G\rightarrow \GL(V/C_V(G))$ and $\overline{\varphi}:\Omega\rightarrow V/C_V(G)$ as in \eqref{eq: barrho} and \eqref{eq: barvarphi} with $U=C_V(G)$. Suppose that  $\alpha\overline{\varphi}=\beta\overline{\varphi}$. Then there exists $v\in C_V(G)$ such that $\alpha\varphi=v+\beta\varphi$. Let $g\in G$. Then 
 $$(\alpha^g)\varphi=(\alpha\varphi)^{g\rho}=(v+\beta\varphi)^{g\rho}=v+ (\beta\varphi)^{g\rho}=v+(\beta^g)\varphi$$
 and so for $g\in G_\alpha$ we have
 $$v+(\beta^g)\varphi = (\alpha^g)\varphi=\alpha\varphi = v+\beta\varphi$$
 which implies $\beta\varphi = (\beta^g)\varphi$.  Since $\varphi$ is an injection, it follows that $g\in G_\beta$ and so $G_\alpha \leqslant G_\beta$. A symmetrical argument yields $G_\beta \leqslant G_\alpha$, and so we have equality. This contradicts the assumption that $G_\alpha\neq G_\beta$ and so $\overline{\varphi}$ is an injection. Moreover, $(\overline{\rho},\overline{\varphi})$ is a representation for the action of $G$ on $\Omega$. Since $V$ is a witness to $\lindim_F(G,\Omega)$, it follows that $C_V(G)=0$.
 \end{proof}

\begin{remark}
\label{rem: primitive implies pt stabs distinct}
    Note that equality of point stabilisers defines a $G$-congruence on $\Omega$, and so primitive groups with $G_\alpha\neq 1$ for all $\alpha\in\Omega$ (i.e.~primitive groups that are not regular) satisfy the hypothesis of Lemma~\ref{lem:fix pt space zero}.
\end{remark} 

To see that the conditions in Lemma \ref{lem:fix pt space zero} are necessary, take $G=C_2$ acting on itself by right multiplication. Then $\lindim_{2}(G,\Omega)=2$ and a witness is $V=\F_2^2 = \langle e_1,e_2\rangle$. We have that $C_V(G)=\{0,e_1+e_2\}$ so that $\dim C_V(G)=1$.

For another example, let $G=S_4$ acting on the set $\Omega$ of right cosets of $H = \langle(1,2,3,4)\rangle$. Let $\alpha = H$ and $\beta = H(1,4)(2,3)$, so that $\alpha \neq \beta$ but $G_\alpha = G_\beta = H$. As seen in Example \ref{eg:S4on6},  $\lindim_2(G,\Omega)=4$. Moreover, for a witness $V$, we see by a {\sc Magma} computation that  $\dim C_V(G) = 1$.

\section{Intransitive actions}

We begin with the following lemma.

\begin{lemma}\label{lem:intransLWRBound}
Let $\Omega=O_1\cup O_2\cup\ldots\cup O_t$ be a disjoint union and suppose that $G$ acts on $\Omega$ with each $O_i$ being $G$-invariant.
 Then $$\max\{\lindim_F(G,O_i)\mid 1 \leqslant i \leqslant t \}\leqslant \lindim_F(G,\Omega)\leqslant \sum_{i=1}^t\lindim_F(G,O_i)$$
    \end{lemma}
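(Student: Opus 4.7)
The plan is to prove the two bounds separately: the lower bound follows by restriction of a witness on $\Omega$, and the upper bound by forming a direct sum of witnesses on the $O_i$.

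For the lower bound, I would fix $i\in\{1,\ldots,t\}$ and let $V$ be a witness for $\lindim_F(G,\Omega)$ afforded by $(\rho,\varphi)$. Since $O_i$ is $G$-invariant, $\omega^g\in O_i$ for all $\omega\in O_i$ and $g\in G$, so the restriction $\varphi|_{O_i}:O_i\to V$ remains injective and the intertwining property for $(\rho,\varphi|_{O_i})$ is inherited from that of $(\rho,\varphi)$. Thus $(\rho,\varphi|_{O_i})$ is a representation of the action of $G$ on $O_i$ over $F$, giving $\lindim_F(G,O_i)\leqslant \dim V=\lindim_F(G,\Omega)$. Taking the maximum over $i$ gives the lower bound.

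For the upper bound, for each $i$ let $V_i$ be a witness for $\lindim_F(G,O_i)$ afforded by $(\rho_i,\varphi_i)$. The main construction is to set $V=V_1\oplus\cdots\oplus V_t$ with $\dim V=\sum_i\lindim_F(G,O_i)$, take $\rho=\bigoplus_i\rho_i:G\to\GL(V)$ as the direct sum representation, and define $\varphi:\Omega\to V$ by $\omega\varphi=\iota_i(\omega\varphi_i)$ for $\omega\in O_i$, where $\iota_i:V_i\hookrightarrow V$ denotes the inclusion into the $i$-th summand. Componentwise, the intertwining property for $(\rho,\varphi)$ is inherited from those of the $(\rho_i,\varphi_i)$ together with the $G$-invariance of the $O_i$: for $\omega\in O_i$ we have $(\omega^g)\varphi=\iota_i((\omega^g)\varphi_i)=\iota_i((\omega\varphi_i)^{g\rho_i})=(\omega\varphi)^{g\rho}$. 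Two distinct points of the same $O_i$ are distinguished by the injectivity of $\varphi_i$, while points of distinct $O_i,O_j$ have images in the subspaces $\iota_i(V_i)$ and $\iota_j(V_j)$, which intersect only in $0$. Provided no two $\varphi_i$ simultaneously map a point to $0$, $\varphi$ is injective, yielding $\lindim_F(G,\Omega)\leqslant\sum_i\lindim_F(G,O_i)$.

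The main obstacle is confirming injectivity of $\varphi$ across different summands in the upper bound. This requires care only in the pathological case where some $\varphi_i$ sends a point $\omega\in O_i$ to $0$ for more than one index $i$: by the intertwining property such an $\omega$ must be a $G$-fixed point, so this obstruction can be avoided by a mild adjustment (e.g.\ replacing $\varphi_i$ with $\varphi_i+v$ for a suitable $G$-fixed vector $v\in V_i$, which preserves intertwining and injectivity).
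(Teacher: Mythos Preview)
Your approach is identical to the paper's: restrict a witness for the lower bound, and take a direct sum of witnesses for the upper bound. The paper in fact asserts ``Then $\varphi$ is an injection'' without further comment, so your extra scrutiny of the cross-summand case is a genuine improvement in care.

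That said, your proposed patch is not watertight. Translating $\varphi_i$ by a $G$-fixed $v\in V_i$ keeps the image inside $\iota_i(V_i)$, so what you actually need is a nonzero $G$-fixed $v$ with $-v\notin\varphi_i(O_i)$; neither condition is guaranteed (e.g.\ $C_{V_i}(G)$ may be $0$ when $V_i$ is a nontrivial irreducible module, and over small fields $\varphi_i$ may already surject onto $V_i$). A cleaner way to dispose of the issue is to push your own observation one step further: since any $\omega$ with $\varphi_i(\omega)=0$ must be $G$-fixed, the obstruction simply cannot occur when no $O_i$ contains a $G$-fixed point---in particular whenever the $O_i$ are $G$-orbits of size at least~$2$, which is the case the lemma is actually used for. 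Without some such hypothesis the upper bound can genuinely fail: taking $G$ trivial and $O_1,O_2$ both singletons gives $\sum_i\lindim_F(G,O_i)=0$ while $\lindim_F(G,\Omega)=1$.
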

\begin{proof}
 Let $W$ be a witness to $\lindim_F(G,\Omega)$ afforded by $(\rho,\varphi)$. For each $i$, we have that $(\rho,\varphi_{\mid O_i})$ is a representation of the action of $G$ on  $O_i$. Thus 
$$\dim W \geqslant \dim \langle (O_i)\varphi\rangle\geqslant \lindim_F(G,O_i)$$
and so we have that $\max\{\lindim_F(G,O_i)\mid i\in\{1,\ldots, t\}\}\leqslant \lindim_F(G,\Omega)$.

For each $i$, suppose that $V_i$ affords a  representation $(\rho_i,\varphi_i)$ of the action of $G$ on $O_i$ over $F$.
Now, let $V=V_1\oplus V_2\oplus\cdots\oplus V_t$ be the formal direct sum of the $V_i$ and define $\rho:G\rightarrow \GL(V)$ by $g\rho= (g\rho_1,g\rho_2,\ldots,g\rho_t)$. Then $\rho$ is a homomorphism. Let $\varphi:\Omega\rightarrow V$ be the map such that if $\omega\in\Omega$ lies in the orbit $O_i$ then $\omega\varphi=\omega\varphi_i$. Then $\varphi$ is an injection. Moreover, for all $\omega\in\Omega$ and $g\in G$, if $\omega\in O_i$ then $\omega^g\in O_i$ and we have that 
$$(\omega^g)\varphi=(\omega^g)\varphi_i=(\omega\varphi_i)^{g\rho_i}=(\omega\varphi)^{g\rho},$$
 since $g\rho_{\mid V_i}=g\rho_i$. Thus $(\rho,\varphi)$ is a representation of the action of $G$ on $\Omega$, so $\dim W\leqslant \dim V$  and the result follows.
\end{proof}

Note that Lemma \ref{lem:intransLWRBound} applies when the $O_i$ are orbits of $G$. Example \ref{ex:52} shows the lower bound in Lemma \ref{lem:intransLWRBound} is sharp.

\begin{example}\label{ex:52}
Suppose that $G$ acts faithfully on each orbit $O_i$ and that the actions of $G$ on orbits are pairwise equivalent, that is, for each $i$ and $j$ there is a bijection $\chi_{i,j}:O_i\rightarrow O_j$ such that $(\omega^g)\chi_{i,j}=(\omega\chi_{i,j})^g$ for all $\omega\in O_i$ and $g\in G$. Thus there is a vector space $V$ that is a witness  for each $\lindim_F(G,O_i)$ and afforded by $(\rho,\varphi_i)$.

Define $\psi_{i,j}:(O_i)\varphi_i\rightarrow (O_j)\varphi_j$ by $\psi_{i,j}=\varphi_i^{-1}\chi_{i,j}\varphi_j$ and note that $\psi_{i,j}$ is a bijection. Then for all $\omega\in O_i$ and $g\in G$ we have that 
\begin{align*}
    ((\omega\varphi_i)^{g\rho})\psi_{i,j}&=(\omega^g)\varphi_i\psi_{i,j}\\
    &=(\omega^g)\chi_{i,j}\varphi_j\\
    &=((\omega\chi_{i,j})^g)\varphi_j\\
    &=((\omega\chi_{i,j})\varphi_j)^{g\rho}\\
    &=((\omega\varphi_i\varphi_i^{-1})\chi_{i,j}\varphi_j)^{g\rho}\\
    &=((\omega\varphi_i)\psi_{i,j})^{g\rho}
\end{align*}
and so the actions of $(G)\rho$ on the $(O_i)\varphi_i$ are pairwise equivalent.

 Let $F$ be a field such that  
 $|F|>t$.
Then $\lindim_F(G,\Omega)=\lindim_F(G,O_1)$. We see this as follows. Let $\lambda_2,\ldots,\lambda_t$ be distinct elements of $F\backslash\{0,1\}$. Then define $\varphi:\Omega\rightarrow V$ by 
$$(\omega)\varphi=\left\{\begin{array}{ll}
     (\omega)\varphi_1 & \textrm{if $\omega\in O_1$},  \\
      \lambda_i((\omega)\chi_{i,1}\varphi_1) &  \textrm{if $\omega\in O_i$ for $i>1$.}
    \end{array}\right.$$
    Since the $\lambda_i$ are distinct,  $\varphi$ is an injection.
To check that $(\rho,\varphi)$ is a representation of $G$ on $\Omega$, we just need to check the intertwining property for all $\omega\in O_2\cup\ldots\cup O_t$. Suppose that $\omega\in O_i$ for some $i>1$ and let $g\in G$. Recall the maps $\psi_{i,1}$ defined above.
Then 
\begin{align*}
 (\omega^g)\varphi & = \lambda_i((\omega^g)\chi_{i,1}\varphi_1) \\
 &=\lambda_i(((\omega\chi_{i,1})^g)\varphi_1)\\
&=\lambda_i(\omega\chi_{i,1}\varphi_1)^{g\rho}\\
&=(\lambda_i((\omega)\chi_{i,1}\varphi_1))^{g\rho}\\
&=((\omega)\varphi)^{g\rho}.
\end{align*}
Hence, $(\rho,\varphi)$ is a representation of the action of $G$ on $\Omega$ and so $\lindim_F(G,\Omega)=\lindim_F(G,O_1)$. \hfill $\diamond$
 \end{example}

The next example shows that we can still get  a sharp lower bound
when the actions of $G$ on its orbits are not equivalent.

\begin{example}
Let $G=S_n$ with $n\geqslant 5$ and let $\Omega$ be the set of all non-empty subsets of $\{1,2,\ldots,n\}$ of size less than $n/2$. 
For each $i<n/2$, let $O_i$ be the set of $i$-subsets. Then the orbits of $G$ on $\Omega$ are the $O_i$ and the action of $G$ on $O_1$ is equivalent to the natural action of $S_n$ on $\{1,2,\ldots,n\}$. 

Let $F^n$ be the permutation module for $G$ acting on $\{1,2,\ldots, n\}$. Let $C$ be the subspace of $F^n$ consisting of all constant vectors and let $V=F^n/C$. As we will see  in Lemma~\ref{lem: upper bound for S_n on k sets}, $\lindim(S_n,O_i)\leqslant n-1$ as $V$ affords a representation of $G$ on each $O_i$ where the images of each element of $\Omega$ are distinct. Hence, $V$ also affords a representation of the action of $G$ on $\Omega.$
 Thus, $\lindim_F(G,\Omega)\leqslant n-1$. Since $\lindim(S_n,O_1)=\lindim_F(G,\{1,2,\ldots,n\})=n-1$, 
 we have that $\lindim(S_n,\Omega)\geqslant \lindim(S_n,O_1)=n-1$ by Lemma~\ref{lem:intransLWRBound}. Thus, $\lindim(S_n,\Omega)= 
 n-1=\max\{\lindim_F(G,O_i)\mid 1\leqslant i\leqslant \lfloor\frac{n-1}{2}\rfloor\}$. \hfill $\diamond$
\end{example}

We now give an example with equality for the upper bound in Lemma \ref{lem:intransLWRBound}.
 The next example shows that the upper bound in Lemma \ref{lem:intransLWRBound} is also sharp.

\begin{example}\label{eg:intransSymO_i}
Let $G=\Sym(O_1)\times \Sym(O_2)\times \cdots\times \Sym(O_t)$ act intransitively on $\Omega$, the disjoint union of $O_1,O_2,\ldots,O_t$ with $|O_i|\geqslant 3$ for all $i$.
We claim that $\lindim_F(G,\Omega)=\sum_i \lindim_F(G,O_i)$.
Note that $$\lindim_F(G,O_i)=\lindim_F(\Sym(O_i),O_i)=|O_i|-1.$$ Thus
$\sum_i \lindim_F(G,O_i)=|\Omega|-t$.

Let $V$ be a vector space that affords a representation $(\rho,\varphi)$ of the action of $G$ on $\Omega$ over $F$. Write $v_\omega=\omega\varphi$ for each $\omega\in\Omega$ and note that $v_{\omega^g}=(v_\omega)^{g\rho}$ for all $g\in G$. For each $i$, fix one element $\alpha_i\in O_i$, and let $\Delta=\Omega\setminus\{\alpha_1,\alpha_2,\ldots,\alpha_t\}$. Note that $|\Delta|=|\Omega|-t.$

Consider the set $\{v_\omega|\omega\in\Delta\}$ and suppose that $\sum_{\omega\in\Delta}\lambda_\omega v_\omega=0$ for some $\lambda_\omega\in F$. Let $\delta\in \Delta$. Then $\delta\in O_i\setminus\{\alpha_i\}$ for exactly one $i$. Consider the transposition $g=(\delta,\alpha_i)\in G$.
Then 
$$0=0^{g\rho}=\sum_{\omega\in\Delta}\lambda_\omega (v_\omega)^{g\rho}=\sum_{\omega\in\Delta}\lambda_\omega v_{\omega^g}=\lambda_{\delta} v_{\alpha_i}+\sum_{\omega\in\Delta\setminus\{\delta\}}\lambda_\omega v_\omega.$$
It follows that $\lambda_{\delta} v_{\alpha_i}=\lambda_{\delta} v_{\delta}$. Since $\varphi$ is injective, we have $v_{\alpha_i}\neq v_{\delta}$, and hence $\lambda_{\delta}=0$.
Therefore the set $\{v_\omega|\omega\in\Delta\}$ is linearly independent and  so $\dim(V)\geqslant |\Delta|=|\Omega|-t.$ 
Now the claim follows from Lemma \ref{lem:intransLWRBound}. \hfill $\diamond$
\end{example}

We can generalise Example \ref{eg:intransSymO_i}.

\begin{theorem}
\label{thm:intrans=}
For $i=1,\ldots,t$, let $H_i \leqslant \Sym(O_i)$ be permutation groups such that for each $i$ we have $(H_i)_\alpha\neq (H_i)_\beta$ for distinct points $\alpha,\beta\in O_i$.
Let $G=H_1\times H_2\times\cdots\times H_t$ act intransitively on the disjoint union $\Omega=O_1\cup O_2\cup \cdots \cup O_t$. 
Then $\lindim_F(G,\Omega)=\sum_{i=1}^t \lindim_F(H_i,O_i)$.
\end{theorem}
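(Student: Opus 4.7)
The plan is to match upper and lower bounds. The upper bound $\lindim_F(G,\Omega) \leqslant \sum_{i=1}^t \lindim_F(H_i,O_i)$ follows from Lemma~\ref{lem:intransLWRBound}, applied to the decomposition $\Omega = O_1 \cup \cdots \cup O_t$ into $G$-invariant subsets, together with the observation that $\lindim_F(G,O_i) = \lindim_F(H_i,O_i)$: the kernel of the $G$-action on $O_i$ is $\prod_{j\neq i} H_j$, and the earlier lemma on passing to the faithful quotient by the kernel identifies the quotient with $H_i$.

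For the lower bound I argue by induction on $t$, the case $t=1$ being immediate. Let $(\rho,\varphi)$ be a representation of $G$ on $\Omega$ with space $V$; replacing $V$ by $\langle \omega\varphi \mid \omega \in \Omega\rangle$ we may assume $V = V_1 + \cdots + V_t$, where $V_i := \langle \omega\varphi \mid \omega\in O_i\rangle$. Set $W := V_2 + \cdots + V_t$ and $W' := V_1 \cap W$. The subgroup $H_1 \times 1 \times \cdots \times 1$ of $G$ fixes every point of each $O_j$ with $j \geqslant 2$, so it acts trivially on every such $V_j$ and hence on $W$; consequently $W'$ is an $H_1$-invariant subspace of $V_1$ contained in the fixed-point subspace $C_{V_1}(H_1)$.

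The core step is to verify that the induced map $\overline{\varphi}: O_1 \to V_1/W'$, $\omega \mapsto W' + \omega\varphi$, is injective; this mimics the proof of Lemma~\ref{lem:fix pt space zero}. If $\overline{\varphi}(\alpha) = \overline{\varphi}(\beta)$ with $\alpha,\beta \in O_1$, then $\alpha\varphi - \beta\varphi \in W' \subseteq C_{V_1}(H_1)$, so for every $h \in H_1$ we have $(\alpha^h)\varphi - (\beta^h)\varphi = \alpha\varphi - \beta\varphi$; taking $h \in (H_1)_\alpha$ forces $(\beta^h)\varphi = \beta\varphi$ and hence $h \in (H_1)_\beta$, so $(H_1)_\alpha \subseteq (H_1)_\beta$, and by symmetry equality holds. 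The hypothesis $(H_1)_\alpha \neq (H_1)_\beta$ for distinct $\alpha,\beta \in O_1$ then forces $\alpha = \beta$. It follows that $V_1/W'$ affords a representation of $H_1$ on $O_1$, so $\dim(V_1/W') \geqslant \lindim_F(H_1,O_1)$.

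Since $W$ is invariant under $H_2 \times \cdots \times H_t$ (each $V_j$ with $j \geqslant 2$ is $H_i$-invariant for every $i \geqslant 2$) and affords a representation of this product on $O_2 \cup \cdots \cup O_t$, the inductive hypothesis yields $\dim W \geqslant \sum_{i=2}^t \lindim_F(H_i,O_i)$. Combining with the previous paragraph,
\[
\dim V \;=\; \dim V_1 + \dim W - \dim W' \;=\; \dim(V_1/W') + \dim W \;\geqslant\; \sum_{i=1}^t \lindim_F(H_i,O_i),
\]
which closes the induction. The crux of the argument is that the sum $V_1 + W$ need not be direct, but the direct-product structure of $G$ forces any overlap $W' = V_1 \cap W$ into $C_{V_1}(H_1)$, which is exactly the subspace one can quotient out of $V_1$ without losing the injectivity required by a representation of $H_1$ on $O_1$; this is the main technical obstacle and the reason the hypothesis on distinct point stabilisers is needed.
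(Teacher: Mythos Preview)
Your proof is correct, and while it shares the same core observation as the paper (that overlaps between the $V_i$ lie in fixed-point subspaces, so the distinct-stabiliser hypothesis lets you quotient them away), the execution differs. The paper first verifies that $G_\alpha\neq G_\beta$ for all distinct $\alpha,\beta\in\Omega$ and invokes Lemma~\ref{lem:fix pt space zero} once to obtain $C_V(G)=0$ for a witness $V$; it then observes that $V_i\cap\sum_{j\neq i}V_j$ is fixed by every $H_k$ (by $H_i$ because it lies in $\sum_{j\neq i}V_j$, and by each $H_j$ with $j\neq i$ because it lies in $V_i$), hence by all of $G$, hence is zero. This gives a direct-sum decomposition $V=\bigoplus_i V_i$ with $\dim V_i\geqslant\lindim_F(H_i,O_i)$ in one stroke.

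Your inductive route avoids the global statement $C_V(G)=0$ and instead peels off one factor at a time: you only need $W'=V_1\cap W\leqslant C_{V_1}(H_1)$, and you redo the injectivity argument of Lemma~\ref{lem:fix pt space zero} at the level of $H_1$ to conclude $\dim(V_1/W')\geqslant\lindim_F(H_1,O_1)$. This has the mild advantage of applying to any representation (not just a witness), and of being self-contained rather than citing Lemma~\ref{lem:fix pt space zero}; the trade-off is that it is a little longer and does not exhibit the pleasant fact that the $V_i$ actually form a direct sum.
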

\begin{proof}
Let $V$ be a witness to $\lindim_F(G,\Omega)$ such that $V$ affords the representation $(\rho,\varphi)$.
Let $\alpha,\beta$ be distinct points of $\Omega$. If there exists $1\leqslant i \leqslant t$ such that $\alpha,\beta \in O_i$, then 
$$G_\alpha = (H_i)_\alpha \times \prod_{j\neq i } H_j \quad  \text{and} \quad G_\beta = (H_i)_\beta \times \prod _{j\neq i} H_j$$ whence $G_\alpha \neq G_\beta$ by the hypothesis of the theorem. If $\alpha \in O_i$ and $\beta \in O_j$ for distinct $i,j$, then $G_\alpha = (H_i)_\alpha \times \prod_{k\neq i} H_k$ and $G_\beta = (H_j)_\beta \times \prod_{k\neq j} H_k$, whence $G_\alpha \neq G_\beta$. Thus, by Lemma~\ref{lem:fix pt space zero} we have $C_V(G) = 0$.

For each $i$, let $V_i=\langle \omega\varphi\mid \omega\in O_i\rangle$. Then $(\rho_{\mid H_i},\varphi_{\mid O_i})$ is a representation for the action of $H_i$ on $O_i$. Thus, $\dim V_i\geqslant \lindim_F(H_i,O_i)$.

For each $j\neq i$ we have that $H_j$ acts trivially on $O_i$ and so $(H_j)\rho$ acts trivially on $V_i$. Similarly, $(H_i)\rho$ acts trivially on $\sum_{j\neq i} V_j$.  It follows that $G=H_1\times \cdots \times H_t$ acts trivially on $V_i\cap \sum_{j\neq i} V_j$, and hence $V_i \cap \sum_{j \neq i} V_j \leqslant C_{V}(G) = 0$. This implies $V= V_i \oplus (\sum_{j \neq i} V_j)$, and repeating this argument gives $V = \bigoplus_{i=1}^tV_i$. Hence,
\[
    \dim V = \sum_{i=1}^t \dim V_i \geqslant \sum_{i=1}^t \lindim_F(H_i,O_i),
\]
and combined with Lemma~\ref{lem:intransLWRBound} we have equality as in the statement.
\end{proof}

The next example shows that $\lindim_F(G,\Omega)$ can also reach values strictly between the two bounds.

\begin{example}\label{eg:intransSymO_i2}
Let $1<s< t$, and let $G=\Sym(O_1)\times \Sym(O_2)\times \cdots\times \Sym(O_s)$ act intransitively on $\Omega$, the disjoint union of $O_1,O_2,\ldots,O_t$ with $|O_i|\geqslant 3$ for all $i$ and $O_j$ is a copy of $O_s$ for all $j>s$, so that $\Sym(O_s)$ has equivalent actions on all $O_j$ for $j\geqslant s$.
More precisely, the action is defined by 
$$\omega^{(\sigma_1,\sigma_2,\ldots,\sigma_s)}=
\left\{\begin{array}{ll}
     \omega^{\sigma_i}& \textrm{if $\omega\in O_i$ for $i\leqslant s$}  \\
     \omega^{\sigma_s}&  \textrm{if $\omega\in O_i$ for $i> s$.}
    \end{array}\right.$$
We claim that if $|F|> t-s+1$, then $\lindim_F(G,\Omega)=\sum_{i=1}^s \lindim_F(G,O_i)$.
Note that $$\lindim_F(G,O_i)=\lindim_F(\Sym(O_i),O_i)=|O_i|-1.$$ Thus
$\sum_{i=1}^s \lindim_F(G,O_i)=\left(\sum_{i=1}^s|O_i|\right)-s$.

Let $\Delta_1=O_1\cup O_2\cup \ldots \cup O_{s-1}$ and $\Delta_2=O_s\cup O_{s+1}\cup \ldots\cup O_t$. As seen in Example \ref{eg:intransSymO_i},
 $\lindim_F(G,\Delta_1)=\sum_{i=1}^{s-1}\lindim_F(G,O_i)$, while in Example \ref{ex:52} we saw that $\lindim_F(G,\Delta_2)=\lindim_F(G,O_s)$ if $|F|>t-s+1$. Thus, by Lemma \ref{lem:intransLWRBound} we deduce that
\[
    \lindim_F(G,\Omega)\leqslant\lindim_F(G,\Delta_1)+\lindim_F(G,\Delta_2)=\sum_{i=1}^{s}\lindim_F(G,O_i).
\]
Alternatively, let $\Delta_3=O_1\cup O_2\cup \ldots\cup O_s$.
 By Lemma \ref{lem:intransLWRBound}, we have that
 \[
    \lindim_F(G,\Delta_3)\leqslant \lindim_F(G,\Omega)
\]
and as in Example \ref{eg:intransSymO_i} we see that
\[
    \lindim_F(G,\Delta_3)=\sum_{i=1}^{s}\lindim_F(G,O_i).
\]
Thus, $\lindim_F(G,\Omega)=\sum_{i=1}^{s}\lindim_F(G,O_i)$, as claimed. \hfill $\diamond$
\end{example}

\section{Imprimitive actions}\label{sec:imprim}

Let $K\leqslant\Sym(\Delta)$ and $ L\leqslant \Sym(\Sigma)$ with $\Sigma=\{1,2,\ldots,\ell\}$. We define the \emph{wreath product} $K\wr  L:=K^\ell\rtimes  L$, such that for all $(h_1,h_2,\ldots, h_\ell)\in K^\ell$ and $\sigma\in  L$ we have that $(h_1,h_2,\ldots,h_\ell)^\sigma=(h_{1\sigma^{-1}},h_{2\sigma^{-1}},\ldots,h_{\ell\sigma^{-1}})$. Then $K\wr  L$ acts on $\Delta\times \Sigma$ by $(\delta,i)^{(h_1,h_2,\ldots,h_\ell)\sigma}=(\delta^{h_i},i^\sigma)$. This action is imprimitive with system of imprimitivity $\{ \{(\delta,i)\mid \delta\in\Delta\}\mid i\in \Sigma\}$. 

We begin with a fundamental example of an imprimitive wreath product that, as a linear group, stabilises a subspace of dimension $1$.

\begin{example}
\label{eg:cp wr sd}
    Suppose that $F = \F_p$ and let $V$ be an $F$-vector space of dimension $\ell+1$ with standard basis $\{e_1,\ldots,e_{\ell+1}\}$.   Let $S \cong S_\ell$ be the subgroup of $\GL(V)$ that fixes $e_{\ell+1}$ and naturally permutes the first $\ell$ basis vectors $e_1,\ldots,e_\ell$ by acting on subscripts. For $1\leqslant i \leqslant \ell$, define $y_i \in \GL(V)$ to be the map that fixes  $e_{j}$ for $j\neq i$ and maps $e_i$ to $e_i+e_{\ell+1}$. Let $P=\langle y_i \mid 1\leqslant i \leqslant \ell\rangle \cong C_p^\ell$ and  set $G=\langle P, S\rangle.$ By direct calculation, we see $G \cong C_p \wr S_\ell$ and we set $\Omega = \{1,\ldots,p\}\times\{1,\ldots,\ell\}$ so that $G$ acts  imprimitively on $\Omega$ as explained above. We compute that
    $$(e_1)^G = \{ e_i + \lambda e_{\ell+1} \mid \lambda \in F\}$$
    so that $|(e_1)^G|=p\ell$.  Furthermore, we see that $G$ preserves the partition of $(e_1)^G$ into $\ell$ parts of size $p$ given by  $B_i = \{e_i+\lambda e_{\ell+1}\mid \lambda \in F \}$.  Thus, $V$ is a representation of the imprimitive action of $G=C_p\wr S_\ell$ on $p\ell$ points over $F$ and we have
    \begin{equation}
    \label{eq: cp wr sd d+1}
        \lindim_p(C_p \wr S_\ell, \Omega) \leqslant \ell+1.
    \end{equation}
    
    Set $U=\langle e_{\ell+1} \rangle$ and observe that $U$ is fixed by $G$ and so $G$ acts on $V/U$. We find that $P$ acts trivially on $V/U$ since 
    \[
        y_i : e_i + U \mapsto e_i + e_{\ell+1}+U = e_i+U,
    \]
    and thus $G/P=S_\ell$ acts on $V/U = \langle e_i +U \rangle $. Hence, $V/U$, as an $FS_\ell$-module, is the permutation module for $S_\ell$ over $F$. From the decomposition of the permutation module $F^\ell$, we see that $V/U$ has  $2$ or $3$ composition factors according to $p\nmid \ell$ or $p\mid \ell$, respectively. In Theorem~\ref{thm:imprimitive theorem}, we show that $V$ is a witness to the linear dimension of $C_p\wr S_\ell$ over $\F_p$. Since  $V$ has at least $3$ composition factors, this shows that Corollary~\ref{cor:almost irred} fails for imprimitive groups.\hfill $\diamond$
\end{example}

 Any group $G$ that preserves a partition of a set $\Omega$ of size $k\ell  $ into $\ell$ parts each of size $ k$ is permutationally isomorphic to a subgroup of a wreath product $S_{ k}\wr S_{\ell}$. Further, if $\Delta \subseteq \Omega$ is a part of a partition $\Sigma$ preserved by $G$, setting $K=(G_\Delta)^\Delta$,  the permutation group induced by $G_\Delta$ on $\Delta$ and $L=G^\Sigma$, the permutation group induced by $G$ on $\Sigma$, we have that $G$ is permutationally isomorphic to a subgroup of $K\wr L$ (see \cite[Theorem 5.5]{PS}). Thus, by Lemma~\ref{lem: h leq g}, to find an upper bound on the linear dimension of an imprimitive group $G$, we study the linear dimension of imprimitive wreath products.
 
 Moreover, if we choose a partition $\Sigma$ with parts as small as possible and size larger than 1, then the group $K=(G_\Delta)^\Delta$ is in fact primitive. This allows us to use results already achieved for the primitive case to bound the linear dimension of imprimitive groups. Note that, in general, $G$ may preserve many partitions, see Example~\ref{eg:twowitnesses}. We now prove Theorem \ref{thm:imprimitive theorem}, which we restate for the  convenience of the reader.

\noindent\textbf{Theorem \ref{thm:imprimitive theorem}.}
\emph{
Let $K\leqslant \Sym(\Delta)$ be   primitive, let $L \leqslant \Sym(\Sigma)$, and let $G= K \wr L$ acting imprimitively on the set $\Omega=\Delta \times \Sigma$. Let $k=|\Delta|$ and $\ell=|\Sigma|$. Then
$$\lindim_F(G, \Omega) = \begin{cases}
     \ell+1 & \text{if }  $K$ \text{ is regular and }k =\mathrm{char}(F), \\
    \ell \lindim_F(K,\Delta) & \text{otherwise. }
\end{cases}$$}
\begin{proof}
 Let $W$ be an $F K$-module that is a witness to $\lindim_F(K,\Delta)$ and let $V$ be the formal direct sum $V=W\oplus W\oplus\cdots\oplus W$ of $\ell$ copies of $W$. Then we can make $V$ an $FG$-module by defining
$$(w_1,w_2,\ldots,w_\ell)^{(g_1,g_2,\ldots,g_\ell)}=(w_1^{g_1},w_2^{g_2},\ldots,w_\ell^{g_\ell})$$
and 
$$(w_1,w_2,\ldots,w_\ell)^\sigma=(w_{1\sigma^{-1}},w_{2\sigma^{-1}},\ldots, w_{\ell\sigma^{-1}})$$
for all $(w_1,w_2,\ldots, w_\ell)\in V$, $g_1,g_2,\ldots, g_\ell\in K$ and $\sigma\in L$.

Let $(\rho_1,\varphi_1)$ be the representation of the action of $K$ on $\Delta$ afforded by $W$. Write  
$\Omega=\Delta\times \{1,2,\ldots, \ell\}$ 
and define $\varphi:\Omega\rightarrow V$ so that $(i,j)\varphi$ is the vector for which all but the $j^{\mathrm{th}}$-coordinate is equal to $0$ and the $j^{\mathrm{th}}$-coordinate is equal to $(i)\varphi_1\in W$. Then letting $\rho:G\rightarrow \GL(V)$ be the induced homomorphism, we get that $(\rho,\varphi)$ is a representation for the action of $G$ on $\Omega$. Thus,
\begin{equation}
    \label{eq:imprimuprbnd}\lindim_F(K \wr L,\Omega) \leqslant \ell \lindim_F( K,\Delta).
\end{equation}

    Suppose first that  $K$ is not regular. 
    Since $K^\ell$ is an intransitive subgroup of $G$ with $\ell$ orbits of size $ k$, using Theorem~\ref{thm:intrans=} (see also Remark~\ref{rem: primitive implies pt stabs distinct}) and Lemma \ref{lem: h leq g}, we have 
    $$\ell \lindim_F(K,\Delta) = \lindim_F(K^\ell,\Omega) \leqslant \lindim_F( K\wr L,\Omega).$$
This gives the required lower bound, and together with the upper bound in \eqref{eq:imprimuprbnd}, establishes the theorem   when $K$ is not regular. 

We now assume that $K$ is regular, so that $K=C_k$ and $k$ is a prime number.  Write $G=P \rtimes L$ 
where $P = \langle x_1,\ldots,x_\ell\rangle \cong C_k^\ell$ and  $L
$ permutes the $x_i$ by acting on subscripts. 
Additionally, write $\Omega=\mathbb{Z}_k\times \{1,2,\ldots, \ell\}$ so that $x_i$ maps $(s,i)$ to $(s+1,i)$ (with operation modulo $k$ in the first coordinate) and fixes all other points. 
 We first  show that $\lindim_F(G,\Omega)\geqslant \ell$.

Let $U$ be a witness to $\lindim_F(G,\Omega)$, that  affords the representation $(\rho,\varphi)$, and  let $v_i:=(0,i)\varphi$ for each $i$. We show that $\{v_i\mid i\in\{1,2,\ldots, \ell\}\}$ is linearly independent.

Assume that $\sum_{i=1}^\ell a_iv_{i} = 0$. For each $s$, we have that
\[
    0=0^{x_s}=\left(\sum_{i=1}^\ell a_iv_{i}\right)^{x_s} =  a_s (1,s)\varphi+\sum_{i=1, i \neq s}^\ell a_i v_{i}
\]
and so taking the difference yields
 $$0  =  \sum_{i=1}^\ell a_i v_{i}-  a_s (1,s)\varphi-\sum_{i=1, i \neq s}^\ell a_i v_{i}  = a_s(v_{s}-(1,s)\varphi).$$ 
 Since $\varphi$ is injective,  it follows that $a_s=0$. Since we can do this with each $s\in\{1,2,\ldots,\ell\}$, we have that each $a_s=0$, and so $\{v_i \mid i\in\{1,2,\ldots, \ell\}\}$ is linearly independent. In particular, $\dim(U) \geqslant \ell$. 

  Suppose  that $k=\ch(F)$. We claim $(1,1)\varphi \notin \langle v_{i} \mid 1 \leqslant i \leqslant \ell \rangle$. Indeed, otherwise $(1,1)\varphi = \sum_{i=1}^\ell a_i v_{i}$ for some $a_i\in F$. Let $s>1$, and consider  $g = x_s \in P$. 
 We have $((1,1)\varphi)^g = (1,1)\varphi$, so $$0=(1,1)\varphi -((1,1)\varphi)^g = \sum_{i=1}^\ell a_i v_{i} - \left (\sum_{i=1}^\ell a_i v_{i} \right )^g = a_s(v_{s} -(1,s)\varphi )$$
and so $a_s=0$ for each $s>1$. Hence, $(1,1)\varphi  = a_1v_1$. Clearly $a_1\neq 0$, and applying $g=x_1\in P$ to $(1,1)\varphi  = a_1v_1$, we see that
\begin{align*}
    (2,1)\varphi  &= a_1(1,1)\varphi=a_1^2 v_1, \\
    (3,1)\varphi  &=a_1^3v_1,
\end{align*}
and so on. It follows that $v_1=(0,1)\varphi=a_1^{k}v_1$ so $a_1^k = 1$. 
Thus, $a_1$ is a root of the polynomial $f(t)=t^k-1 \in F[t]$. Since $\ch(F)=k$, we have $f(t)=(t-1)^k$, which means $a_1=1$ and so $(1,1)\varphi=v_1$,   a contradiction.
Hence, $\dim(U)\geqslant \ell+1$. From~\eqref{eq: cp wr sd d+1}, Lemma~\ref{lem: ext field} and Lemma~\ref{lem: h leq g} we have 
$$\lindim_F(K \wr L,\Omega) \leqslant \lindim_F(K \wr S_\ell,\Omega)\leqslant \lindim_k(K \wr S_\ell,\Omega)\leqslant \ell+1.$$
This  establishes the theorem  when $K$ is regular and $k =\ch(F)$.

We now suppose that  $k \neq \ch(F)$. Since both $\ch(F)$ and $k$ are prime, this also means $\ch(F) \nmid k= |K|$. For $1\leqslant i \leqslant \ell$, set  $V_i = \langle (x,i)\varphi|x\in \mathbb{Z}_k\rangle$ and note that $V_i$ is invariant under $\langle x_i\rangle$. By Maschke's Theorem applied to the nontrivial $F\langle x_i\rangle$-module $V_i$, we have $V_i = C_{V_i}(  x_i) \oplus U_i$   where~$U_{i}$ is a nontrivial $F\langle x_i\rangle$-module. For $i\neq j$, we see that $x_i$ centralises $V_j$, thus $V_i \cap \left (\sum_{j\neq i}V_j\right ) \leqslant C_{V_i}(  x_i  )$. Thus,
\[
\dim V \geqslant \dim\left (\sum_{i=1}^\ell V_i \right ) \geqslant \sum_{i=1}^\ell\dim \left ( V_i/C_{V_i}(  x_i  ) \right ) = \sum_{i=1}^\ell \dim(U_{i}).
\]
Now, let $1\leqslant i \leqslant \ell$  and set $X=U_{i }$ and $x=x_i$. We see that $X$ affords a representation for the group action of $\langle x \rangle \cong K$ over $F$. Indeed, since $X$ is a nontrivial module, there exists $u\in X$ with  $u^{x} \neq u$ and so the orbit of $u$ under $\langle x\rangle$ has length~$k$. 
 It follows that $\dim (X) \geqslant \lindim_F(K,\Delta)$ and so  $\dim(U)\geqslant \ell\lindim_F(K,\Delta)$. Together with~\eqref{eq:imprimuprbnd}, this completes the proof.  
\end{proof}

\begin{remark}
Lemma~\ref{lem: h leq g} can be used in combination with Theorem~\ref{thm:imprimitive theorem} to give upper bounds on groups acting imprimitively. These bounds are not necessarily  tight. For example, let $G=C_{15}$ viewed as a regular permutation group on $\Omega$ with cardinality  $15$ points. Then 
$$\lindim_2(C_{15},\Omega)=4$$
as witnessed by the embedding $G\cong \GL(1,2^4)$ in $\GL(4,2)$. On the other hand, $G$ acts imprimitively, with two block systems, featuring blocks of size $3$ and $5$. The above theorem would give the following bounds:
$$\lindim_2(C_{15},\Omega) \leqslant \lindim_2(S_3\wr S_5,\Omega )=5\cdot 2=10$$
and
$$\lindim_2(C_{15},\Omega ) \leqslant \lindim_2(S_5\wr S_3,\Omega)=3\cdot 4 = 12$$
which are far from accurate.
\end{remark}

The following example shows that the bound given by inclusion can be tight, and, gives us an example of a group with two distinct witnesses.

\begin{example}\label{eg:twowitnesses}
Let $G=C_6$ be the cyclic group of order $6$ and let $G$ act regularly on $\Omega$ so that~$\Omega$ has cardinality $6$. We have $\lindim_2(C_6,6)=4$ and there are two non-isomorphic witnesses. The witnesses arise from the two different imprimitive embeddings, $C_6 \hookrightarrow S_2 \wr S_3$ and $C_6 \hookrightarrow S_3 \wr S_2$. From Theorem~\ref{thm:imprimitive theorem} we have 
$$\lindim_2(S_2 \wr S_3,\Omega) = 3+1 = 4 \quad \text{and}\quad \lindim_2(S_3 \wr S_2, \Omega) = 2\cdot 2 =4$$
so that the witnesses $V_1$ and $V_2$ for $G_1=S_2\wr S_3$ and for $G_2=S_3 \wr S_2$, respectively are both witnesses for $G$.

Let $G=\langle x, y \rangle$ with $|x|=2$ and $|y|=3$. From Example~\ref{eg:cp wr sd}, we see that a witness $V_1$ for~$G_1$ has basis $\{e_1,e_2,e_3,e_4\}$ such that $\{e_1,e_2,e_3\}$ is an orbit for $y$ and $y$ fixes $e_4$. Thus, $C_{V_1}(y)=\langle e_1+e_2+e_3,e_4\rangle$ has dimension $2$. On the other hand, from the proof of Theorem~\ref{thm:imprimitive theorem} we see that the witness $V_2$ for $G_2$ is a direct sum of witnesses $U_1$ and $U_2$ for $S_3$ over $\F_2$ that are swapped by $S_2$. Since $U_1$ and $U_2$ are the deleted permutation module for $S_3$ over $\F_2$, we see that $C_{U_1}(y)=C_{U_2}(y)=0$. Since $V_2=U_1+U_2$, we have $C_{V_2}(y)=C_{U_1}(y)+C_{U_2}(y) = 0$. This shows that $V_1$ and $V_2$ are not isomorphic as $G$-modules.\hfill $\diamond$
\end{example}

\section{Wreath products in product action}\label{sec:prodaction}

An important class of primitive groups are wreath products in product action. Given  $K\leqslant \Sym(\Delta)$ and $L\leqslant S_\ell$, the wreath product $K\wr L=K^\ell\rtimes L$ acts on the Cartesian power $\Delta^\ell$ by
$$(\delta_1,\delta_2,\ldots, \delta_\ell)^{(h_1,h_2,\ldots, h_\ell)\sigma}= (\delta_{1\sigma^{-1}}^{h_{1\sigma^{-1}}},\delta_{2\sigma^{-1}}^{h_{2\sigma^{-1}}},\ldots, \delta_{\ell\sigma^{-1}}^{h_{\ell\sigma^{-1}}})$$
Note that $\sigma(h_1,h_2,\ldots,h_\ell)\sigma^{-1}=(h_{1\sigma},h_{2\sigma},\ldots, h_{\ell\sigma})$.
This action is primitive if and only if $K$ is primitive but not regular on $\Delta$, $\Delta$ is finite, and $L$ is transitive on $\{1,2,\ldots, \ell\}$ (see \cite[Theorem~5.18]{PS}).

\begin{lemma}
\label{lem: upper bound for product action}
Let $K\leqslant \Sym(\Delta)$ and $L\leqslant S_\ell$. Then 
$$\lindim_F(K\wr L,\Delta^\ell)\leqslant \ell\lindim_F(K,\Delta).$$
\end{lemma}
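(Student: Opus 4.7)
The plan is to take a witness $W$ for $\lindim_F(K,\Delta)$, and build a representation on $V = W \oplus W \oplus \cdots \oplus W$ ($\ell$ copies) for the product action, using the diagonal-type embedding $\varphi:(\delta_1,\ldots,\delta_\ell)\mapsto (\delta_1\varphi_0,\ldots,\delta_\ell\varphi_0)$. This is the exact analogue, in product action, of the imprimitive construction carried out at the start of the proof of Theorem \ref{thm:imprimitive theorem}.

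First, let $d=\lindim_F(K,\Delta)$, let $W$ be a witness afforded by $(\rho_0,\varphi_0)$, and form the external direct sum $V=W\oplus W\oplus\cdots\oplus W$ of $\ell$ copies of $W$, so $\dim V=\ell d$. I would equip $V$ with the $F(K\wr L)$-module structure given by
\[
(w_1,\ldots,w_\ell)^{(h_1,\ldots,h_\ell)} = (w_1^{h_1},\ldots,w_\ell^{h_\ell}),\qquad (w_1,\ldots,w_\ell)^\sigma = (w_{1\sigma^{-1}},\ldots,w_{\ell\sigma^{-1}}),
\]
and check that the relations $\sigma(h_1,\ldots,h_\ell)\sigma^{-1}=(h_{1\sigma},\ldots,h_{\ell\sigma})\cdot$ are respected, so that these formulae combine to a genuine homomorphism $\rho:K\wr L\to \GL(V)$.

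Next I would define $\varphi:\Delta^\ell\to V$ by $(\delta_1,\ldots,\delta_\ell)\varphi=(\delta_1\varphi_0,\ldots,\delta_\ell\varphi_0)$. Injectivity of $\varphi$ follows immediately from injectivity of $\varphi_0$ componentwise. The remaining task is to verify the intertwining property (\ref{intertwin}) for an arbitrary $g=(h_1,\ldots,h_\ell)\sigma\in K\wr L$. Using the product action formula
\[
(\delta_1,\ldots,\delta_\ell)^g=\bigl(\delta_{1\sigma^{-1}}^{h_{1\sigma^{-1}}},\ldots,\delta_{\ell\sigma^{-1}}^{h_{\ell\sigma^{-1}}}\bigr),
\]
applying $\varphi$ to each side and using the intertwining property of $(\rho_0,\varphi_0)$ on each component, both sides reduce to $\bigl((\delta_{1\sigma^{-1}}\varphi_0)^{h_{1\sigma^{-1}}},\ldots,(\delta_{\ell\sigma^{-1}}\varphi_0)^{h_{\ell\sigma^{-1}}}\bigr)$, so $(\rho,\varphi)$ is a representation of $K\wr L$ on $\Delta^\ell$ over $F$.

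Consequently $\lindim_F(K\wr L,\Delta^\ell)\leqslant\dim V=\ell\lindim_F(K,\Delta)$, as required. There is no serious obstacle here: the only thing to be careful about is the bookkeeping when passing from the imprimitive module action on $V$ (base group componentwise, top group permuting coordinates) to the product action on $\Delta^\ell$, to make sure the indices $\sigma$ versus $\sigma^{-1}$ line up. This is essentially the same verification already carried out in the opening paragraph of the proof of Theorem \ref{thm:imprimitive theorem}, so the argument is quite short.
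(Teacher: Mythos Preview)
Your proof is correct and builds the same $F(K\wr L)$-module $V=W^{\oplus\ell}$ as the paper does. The only difference is in the final step: the paper picks the diagonal vector $w=(\delta\varphi_0,\ldots,\delta\varphi_0)$, computes that its stabiliser in $(K\wr L)\rho$ equals the image of the point stabiliser $(K\wr L)_{(\delta,\ldots,\delta)}$, and then invokes Lemma~\ref{lem: rep affords action}; you instead write down $\varphi$ on all of $\Delta^\ell$ and verify the intertwining identity directly. Both routes amount to the same bookkeeping, and your explicit check is just the imprimitive calculation from the start of the proof of Theorem~\ref{thm:imprimitive theorem} with the product-action indexing substituted in, so there is nothing to add.
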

\begin{proof}
Let $V$ be a witness for $\lindim_F(K,\Delta)$ afforded by $(\rho,\varphi)$ and let $W$ be the direct sum of $\ell$ copies of $V$, and note $\dim(W)=\ell \lindim_F(K,\Delta)$. Define $\overline{\rho}:K\wr L\rightarrow \GL(W)$ by 
\begin{equation}
    \label{eq:action on V^ell}
(v_1,v_2,\ldots,v_\ell)^{((h_1,h_2,\ldots,h_\ell))\overline{\rho}}=(v_1^{(h_1)\rho},v_2^{(h_2)\rho},\ldots,v_\ell^{(h_\ell)\rho})
\end{equation}
and $$(v_1,v_2,\ldots,v_\ell)^{(\sigma)\overline{\rho}}=(v_{1\sigma^{-1}},v_{2\sigma^{-1}},\ldots, v_{\ell\sigma^{-1}})$$
for all $v_i\in V$, $h_i\in K$ and $\sigma\in L$. 

Pick $\delta \in \Delta$ and set $v=(\delta )\varphi$. Let $w=(v,v,\ldots,v) \in W$. We compute that
\[
    ((K \wr L)\overline{\rho})_w = (K_\delta \wr L)\overline{\rho}=((K \wr L)_{(\delta,\ldots,\delta)})\overline{\rho}.
\]
Thus, Lemma~\ref{lem: rep affords action} shows that $W$ affords a representation of the group action of $K \wr L$ on $\Delta^\ell$. This completes the proof.
\end{proof}

\begin{lemma}
\label{lem: upper bound for k^l when p div k}
Let $G = \Sym(\Delta) \wr S_\ell$ and suppose that $|\Delta|=k>2$. If $p$ is a prime with $p \mid k$,  then for any field $F$ of characteristic $p$, we have
$$\lindim_F(G,\Delta^\ell)\leqslant (k-2)\ell +1. $$
\end{lemma}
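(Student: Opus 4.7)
The plan is to refine the representation constructed in Lemma~\ref{lem: upper bound for product action} by restricting to a proper $G$-submodule that still contains the image of $\Delta^\ell$. When $p\mid k$, the permutation module $F^\Delta$ has a trivial 1-dimensional quotient sitting above the fully deleted permutation module; stacking these trivial quotients across the $\ell$ factors yields $\ell-1$ dimensions that can be shaved off the straightforward bound $\ell(k-1)$.

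Concretely, recall that $\lindim_F(\Sym(\Delta),\Delta)=k-1$, with witness $V:=F^\Delta/C$ afforded by the injection $\varphi_0\colon\delta\mapsto e_\delta+C$, where $C=\langle\sum_{\delta\in\Delta}e_\delta\rangle$. When $p\mid k$ we have $C\subseteq C^\perp$, so $W_0:=C^\perp/C$ is a $K$-submodule of $V$ of dimension $k-2$, and the further quotient $V/W_0\cong F^\Delta/C^\perp$ is the trivial $1$-dimensional $K$-module. Crucially, for every $\delta\in\Delta$ the image of $(\delta)\varphi_0=e_\delta+C$ in $V/W_0$ is the same element $1\in F$.

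Next, I would apply the construction of Lemma~\ref{lem: upper bound for product action} to obtain the $FG$-module $V^{\oplus\ell}$ of dimension $(k-1)\ell$ together with the injection $\varphi\colon(\delta_1,\ldots,\delta_\ell)\mapsto((\delta_1)\varphi_0,\ldots,(\delta_\ell)\varphi_0)$. The subspace $W_0^{\oplus\ell}$ is a $G$-submodule ($K^\ell$ preserves each $W_0$ componentwise, and $S_\ell$ permutes identical summands), and the quotient
\[
V^{\oplus\ell}/W_0^{\oplus\ell}\cong (V/W_0)^{\oplus\ell}\cong F^\ell
\]
is the natural permutation module for $S_\ell$ lifted to $G$ (with $K^\ell$ acting trivially). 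The constant line $C_\ell\subset F^\ell$ is $G$-invariant and $1$-dimensional, so its preimage $A\subseteq V^{\oplus\ell}$ under the quotient map is a $G$-submodule of dimension $\ell(k-2)+1=(k-2)\ell+1$.

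Finally, I would verify that the image of $\varphi$ lies in $A$: since each coordinate $(\delta_i)\varphi_0$ maps to $1\in V/W_0$, the tuple $((\delta_1)\varphi_0,\ldots,(\delta_\ell)\varphi_0)$ maps to the constant vector $(1,\ldots,1)\in C_\ell$. Consequently, the restriction of $\varphi$ to $A$ (together with the restricted $G$-action on $A$) is a representation of the action of $G=\Sym(\Delta)\wr S_\ell$ on $\Delta^\ell$, giving $\lindim_F(G,\Delta^\ell)\leqslant\dim A=(k-2)\ell+1$. There is no serious obstacle here: once one notices that the trivial quotients $V/W_0$ in each coordinate are hit by all of $\varphi_0(\Delta)$ at the \emph{same} vector, the identification of the correct submodule $A$ and the dimension count are both routine.
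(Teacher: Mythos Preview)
Your proof is correct and follows essentially the same approach as the paper. Both arguments form $V^{\oplus\ell}$ with $V=F^\Delta/C$, identify the submodule $W_0^{\oplus\ell}=(C^\perp/C)^{\oplus\ell}$, and exploit the fact that every $(\delta)\varphi_0$ has the same image in the trivial quotient $V/W_0$; the paper phrases this by bounding the span $U$ of the image via $\dim(U+W_0)=(k-2)\ell+1$, whereas you name the submodule $A$ (the preimage of the constant line) directly, but these are the same subspace.
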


\begin{proof}
    As in the previous proof, let $V$ be a witness to $\lindim_F(\Sym(\Delta),\Delta)$, so that $\dim(V)=k-1$, and let $W$ be the direct sum of $\ell$ copies of $V$. Let $(\overline{\rho},\overline{\varphi})$ be the representation of the group action afforded by $W$. Fix $\delta \in \Delta$ and set $$u:= (\delta,\delta,\ldots,\delta)\overline{\varphi}.$$ Note that $(\delta,\ldots,\delta)$ is fixed  by the subgroup $S_\ell$ of $\Sym(\Delta)\wr S_\ell$, and hence $u \in W$ is fixed by the subgroup $(S_\ell)\overline{\rho}$.  
    By the proof of the previous lemma, $\Delta^\ell$ is in bijection with $u^{G\overline{\rho}}$. Let $U$ be the subspace of $W$ spanned by  $u^{G\overline{\rho}}=(\Delta^\ell)\overline{\varphi}$. Note that $(\overline{\rho}|_U,\overline{\varphi})$ is a representation of the group action, and so $\lindim_F(G,\Delta^\ell)\leqslant \dim(U)$.

    Since $p \mid k$, the permutation module $F^k$ for $\Sym(\Delta)$ has submodules $C$ and $C^\perp$ of dimensions $1$ and $k-1$ respectively with $C\subseteq C^\perp$. Moreover,  $V=F^k/C$,  and $V_0:=C^\perp/C$ satisfies $\dim(V_0)=k-2$.  In particular, $V/V_0$ has dimension $1$ and $\Sym(\Delta)$ acts trivially on $V/V_0$. Let $W_0$ denote the direct sum of $\ell$ copies of $V_0$, and  naturally identify $W_0$ as a submodule of $W$. Note that $\dim(W_0)=(k-2)\ell$, so $\dim(W/W_0)=\ell$, and $(\Sym(\Delta)^\ell)\overline{\rho}$ acts trivially on $W/W_0$. Hence, the vector $u+W_0 \in W/W_0$ is fixed by $(\Sym(\Delta)^\ell \rtimes S_\ell)\rho = G\rho$.
    
    In particular, $(W_0+U)/W_0 = \langle u^G + W_0\rangle = \langle u + W_0\rangle$ has dimension $1$. Thus,
\[
    \lindim_F(G,\Delta^\ell) \leqslant \dim( U) \leqslant \dim(U+W_0) = (k-2)\ell +1.\qedhere
\]
\end{proof}

\begin{lemma}
\label{lem: lower bound for k^l}
Let $K=\Sym(\Delta)$ and $L\leqslant S_\ell$  and suppose that $|\Delta|=k>2$.
Then
$$\lindim_F(K\wr L,\Delta^\ell)\geqslant (k-2)\ell +1. $$
\end{lemma}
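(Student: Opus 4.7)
The plan is to exhibit $(k-2)\ell+1$ explicit linearly independent vectors in a witness $W$ to $\lindim_F(G,\Delta^\ell)$, where $G=K\wr L$. Let $(\rho,\varphi)$ be the representation afforded by $W$. Fix distinct $\delta_0,\alpha_0\in\Delta$ (possible as $k\geqslant 3$), and for each $i\in\{1,\ldots,\ell\}$ and $\delta\in\Delta$ set $v_i(\delta):=(\delta_0,\ldots,\delta,\ldots,\delta_0)\varphi$ (with $\delta$ in the $i$-th position), and $v_0:=v_i(\delta_0)$, which is independent of $i$. Since $K^\ell\leqslant G$ already acts transitively on $\Delta^\ell$, the intertwining property forces $v_0\neq 0$---otherwise the entire orbit $(\delta_0,\ldots,\delta_0)^G=\Delta^\ell$ would map to $0$, contradicting injectivity of $\varphi$. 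The proposed set is
\[
\mathcal{B}:=\{v_1(\alpha_0)\}\cup\{v_i(\delta)\mid 1\leqslant i\leqslant\ell,\ \delta\in\Delta\setminus\{\delta_0,\alpha_0\}\},
\]
of size $(k-2)\ell+1$.

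To prove that $\mathcal{B}$ is linearly independent, I would start from a hypothetical relation $\nu v_1(\alpha_0)+\sum_{i,\delta}\lambda_{i,\delta}v_i(\delta)=0$ and peel off coefficients by applying (via $\rho$) transpositions $\tau=(\delta^*,\alpha_0)\in K$ acting on a single coordinate. For each $i_0\geqslant 2$ and $\delta^*\in\Delta\setminus\{\delta_0,\alpha_0\}$, the transposition acting on coordinate $i_0$ fixes every element of $\mathcal{B}$ except that it swaps $v_{i_0}(\delta^*)\leftrightarrow v_{i_0}(\alpha_0)$; subtracting the transformed relation from the original yields $\lambda_{i_0,\delta^*}(v_{i_0}(\delta^*)-v_{i_0}(\alpha_0))=0$, and injectivity of $\varphi$ forces $\lambda_{i_0,\delta^*}=0$. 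After all such terms vanish, the relation reduces to one involving only $v_1(\alpha_0)$ and the $v_1(\delta)$ for $\delta\in\Delta\setminus\{\delta_0,\alpha_0\}$. Applying the same transposition in coordinate $1$ (which additionally swaps $v_1(\alpha_0)\leftrightarrow v_1(\delta^*)$) and subtracting gives $(\nu-\lambda_{1,\delta^*})(v_1(\alpha_0)-v_1(\delta^*))=0$, so $\nu=\lambda_{1,\delta^*}$ for every such $\delta^*$, collapsing the relation to $\nu\cdot s_1=0$ where $s_1:=\sum_{\delta\in\Delta\setminus\{\delta_0\}}v_1(\delta)$.

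The main obstacle is to verify that $s_1\neq 0$. Restricting $\rho$ to the factor $N_1\cong K$ acting in coordinate $1$ makes $V_1:=\langle v_1(\delta)\mid\delta\in\Delta\rangle$ afford a representation of $K=\Sym(\Delta)$ on $\Delta$, so $\dim V_1\geqslant\lindim_F(\Sym(\Delta),\Delta)=k-1$ by D'Alconzo--Di Scala's result, and trivially $\dim V_1\leqslant k$. If $\dim V_1=k$, the vectors $v_1(\delta)$ are linearly independent and $s_1$ is a nonzero sum of $k-1$ of them. If $\dim V_1=k-1$, the kernel of the surjection $F^\Delta\twoheadrightarrow V_1$ is a $1$-dimensional $F\Sym(\Delta)$-submodule of $F^\Delta$; a direct character-theoretic check (showing that for $k\geqslant 3$ no nonzero vector in $F^\Delta$ transforms by the sign character of $\Sym(\Delta)$ in any characteristic) forces this kernel to be the constant subspace $C=\langle\sum_\delta e_\delta\rangle$, so the relation $\sum_{\delta\in\Delta}v_1(\delta)=0$ holds in $V_1$ and hence $s_1=-v_0\neq 0$. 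Thus $\nu=0$, all $\lambda_{i,\delta}=0$, and $\dim W\geqslant|\mathcal{B}|=(k-2)\ell+1$.
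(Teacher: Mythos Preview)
Your argument is correct and follows essentially the same strategy as the paper: exhibit $(k-2)\ell+1$ explicit images under $\varphi$ and prove them linearly independent by applying coordinate-wise transpositions and subtracting. The one substantive difference is your choice of the ``extra'' vector. You take $v_1(\alpha_0)$, which is moved by the transposition $(\delta^*,\alpha_0)$ acting in coordinate~$1$; this forces the detour through $\nu=\lambda_{1,\delta^*}$ and the verification that $s_1=\sum_{\delta\neq\delta_0}v_1(\delta)\neq 0$, which in turn requires the submodule analysis of $F^\Delta$. The paper instead takes $v_0=(\delta_0,\ldots,\delta_0)\varphi$ as the extra vector. Since every entry of this tuple is $\delta_0$, the transposition $(\alpha_0,\delta^*)$ in \emph{any} coordinate fixes $v_0$, so the peeling-off step works uniformly for all $i$ and the relation collapses directly to $\lambda_1 v_0=0$, finishing in one line. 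Your module-theoretic argument for $s_1\neq 0$ is valid, but entirely avoidable with this simpler choice; equivalently, had you applied $(\delta_0,\delta^*)$ rather than $(\alpha_0,\delta^*)$ in coordinate~$1$ at the final stage, you would have obtained $\lambda_{1,\delta^*}=0$ directly.
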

\begin{proof} We write $\Delta=\{1,2,\ldots,k\}$.

Suppose $V$ is a witness to $\lindim_F(\Sym(\Delta) \wr L,\Delta^\ell)$ affording $(\rho,\varphi)$ and for $1\leqslant i \leqslant \ell$ and $2\leqslant j \leqslant k$,
set $v_{ij}=(1,\ldots,1,j,1,\ldots,1)\varphi$ where all entries are equal to $1$ except the $i$-th entry is equal $j$. Also set $v_1=(1,\ldots,1)\varphi$ (all entries equal to~$1$).

We claim that $X=\{v_1\} \cup \{v_{ij} \mid 1\leqslant i \leqslant \ell,3\leqslant j \leqslant k\}$ is linearly independent. Since $|X|=(k-2)\ell+1$, this proves the statement.

Consider a linear combination of 
$X$  equal to $0$, namely
\begin{equation}
\label{eq:lin comb2}    
\lambda_1 v_1+\sum_{\substack{1\leqslant i \leqslant \ell \\ 3\leqslant j \leqslant k}}  \lambda_{ij}v_{ij}=0.
\end{equation}

Pick $1\leqslant m \leqslant \ell$ and $3 \leqslant n \leqslant k$, and let $g_{mn}$ be the element of $K^\ell$, consisting of the transposition $(2,n)$ in the $m$-th entry and  the identity in all other entries. We calculate
\[
    (v_{ij})^{g_{mn}\rho}=\begin{cases} v_{ij} & \text{if } i \neq m, \\ v_{ij} & \text{if } i=m \text{ and } j\neq n, \\
v_{m2} &\text{if } i=m \text{ and }  j=n,\end{cases}
\]
and hence $g_{mn}\rho$ fixes all vectors in $X$, except for $v_{mn}$ which is mapped to $v_{m2}$.

Now we apply $g_{mn}\rho$ to \eqref{eq:lin comb2} above. Using linearity and the fact that $0$ is fixed by $g_{mn}\rho$, we get that 
\[
0 = \lambda_1 v_1+\sum_{\substack{1\leqslant i \leqslant \ell \\ 3\leqslant j \leqslant k\\i\neq m\\j\neq n}}  \lambda_{ij}v_{ij} + \lambda_{mn}v_{m2}
\]
and therefore
\[
    0=0-0^{g_{mn}\rho} = \lambda_{mn}(v_{mn}-v_{m2}).
\]
Since $\varphi$ is injective, we have that $v_{mn}-v_{m2}\neq 0$, and so $\lambda_{mn}=0$. Since $m$ and $n$ were arbitrary,  this argument shows $\lambda_{mn}=0$ for all    $1\leqslant m \leqslant \ell$ and $3\leqslant n \leqslant k$. Hence, \eqref{eq:lin comb2} yields $\lambda_1 v_1=0$ and thus $\lambda_1=0$.
This proves the claim.
\end{proof}

We now determine the linear dimension of the largest primitive wreath product that preserves the Cartesian product $\Delta^\ell$ where $|\Delta|=k$, namely,  $G=S_k \wr S_\ell$, and prove Theorem \ref{thm:prodaction}. Since any group acting primitively in product action is contained in (a conjugate of) $G$, Lemma~\ref{lem: h leq g} gives upper bounds for the linear dimension of such groups. Note that $S_2 \wr S_\ell$ acting on $2^\ell$ points is imprimitive since $S_2$ is regular, see \cite[Theorem 5.18]{PS}, so we assume $k\geqslant 3$.

We now prove Theorem \ref{thm:prodaction} and restate it for convenience of the reader.

\noindent\textbf{Theorem \ref{thm:prodaction}.}
\emph{Let $\Delta$ be a finite set of cardinality $k\geqslant 3$ and let $\ell\geqslant 2$. Suppose that $\ch(F)=p\geqslant 0$.
Then $$\lindim_F(S_k\wr S_\ell,\Delta^\ell)=
\begin{cases} 
(k-2)\ell+1 &\text{if }p\mid k, \\
(k-1)\ell & \text{if }p \nmid k.
\end{cases}$$}
\begin{proof}
If $p\mid k$,  the result follows immediately from Lemmas~\ref{lem: upper bound for k^l when p div k}  and \ref{lem: lower bound for k^l}. For the rest of the proof we assume that $p\nmid k$.

Let $V$ be the permutation module over $F$ for $S_k$ with basis $\{e_\delta\mid \delta\in \Delta\}$. Let 
$W=V\otimes V\otimes \cdots\otimes V$, the tensor product of $\ell$-copies of $V$. Then $W$ is an $F(S_k\wr S_\ell)$-module via the action
$$(e_{\delta_1}\otimes e_{\delta_2}\otimes\cdots\otimes e_{\delta_\ell})^{(h_1,h_2,\ldots,h_\ell)}=e_{\delta_1^{h_1}}\otimes e_{\delta_2^{h_2}}\otimes\cdots\otimes e_{\delta_\ell^{h_\ell}} $$
and
$$(e_{\delta_1}\otimes e_{\delta_2}\otimes\cdots\otimes e_{\delta_\ell})^\sigma= e_{\delta_{1\sigma^{-1}}}\otimes e_{\delta_{2\sigma^{-1}}}\otimes\cdots\otimes e_{\delta_{\ell\sigma^{-1}}}$$
Note that the basis $\{e_{\delta_1}\otimes e_{\delta_2}\otimes\cdots\otimes e_{\delta_\ell}\mid \delta_1,\ldots,\delta_\ell\in\Delta\}$ of $W$ is in bijection with $\Delta^\ell$ and so by Corollary \ref{cor:idpermmodule}, $W$ is isomorphic to the permutation module over $F$ for the action of $S_k\wr S_\ell$ on $\Delta^\ell$.  

Since $p\nmid k$, we have that $V=C\oplus C^\perp$. For each subset $S$ of $\{1,2,\ldots, \ell\}$, define 
$$W_S=\langle v_1\otimes v_2\otimes \cdots\otimes v_\ell \mid v_i\in C^\perp \textrm{ if } i\in S \textrm{ and } v_i\in C \textrm{ if } i\notin S\rangle.$$
So, for example, if $S=\{1,\ldots,m\}$ then $W_S = \underbrace{C^\perp \otimes \cdots \otimes C^\perp}_{m \textrm{ times}} \otimes \underbrace{C \otimes \cdots \otimes C}_{\ell -m \textrm{ times}}$. 
Note that $\dim(W_S)=(k-1)^{|S|}$ and by \cite[Lemma 4.4.3(vi)]{KL}, $W_S$ is irreducible as an $FS_k^\ell$-module. As a vector space, we may write $W=V \otimes W'$, where $W'=\underbrace{V\otimes V\otimes\cdots \otimes V}_{\ell-1 \textrm{ times}}$, and with any basis $\{x_j\}$  for $W'$ we have a direct sum decomposition 
\begin{equation}
    \label{eq:decomp}W=\bigoplus_j \left(V\otimes \langle x_j\rangle\right).
\end{equation}
We choose a basis $\{u_1,u_2,\ldots, u_k\}$ of $V$ so that $u_1,u_2\ldots, u_{k-1}\in C^\perp$ and $u_k\in C$. Let $\mathcal B = \{x_1,\ldots,x_{k^{\ell-1}}\}$ be the basis of $W'$  such that each vector in $\mathcal B$ is of the form $y_{j_2}\otimes y_{j_3}\otimes\cdots\otimes y_{j_{\ell}}$ with each $y_{j_i}\in \{u_1,u_2,\ldots,u_k\}$. With respect to the decomposition as in \eqref{eq:decomp}, we define $\pi_j:W\rightarrow V\otimes \langle x_j\rangle$ to be the projection map and for $S\subseteq\{1,2,\ldots,\ell\}$ define
 $X_S=\{x_j\mid (W_S)\pi_j\neq 0\}.$  Note that  $x_j\in X_S$ if and only if $x_j=y_{j_2}\otimes y_{j_3}\otimes\cdots\otimes y_{j_{\ell}}$ with $y_{j_i}= u_k$ precisely for $i\notin S$. Thus $X_{S}\cap X_{R}=\varnothing$ for all subsets $R,S$ of the same size with $R\neq S$.
Then as  $$W_S\subseteq \bigoplus_{x_j\in X_S} V\otimes \langle x_j\rangle$$
 it follows that $W_S\cap \langle W_R\mid |R|=|S|\textrm{ and } R\neq S\rangle =0$.
  
 For each $m$ such that $0\leqslant m\leqslant \ell$, let $W_m=\langle W_S\mid |S|=m\rangle$. Then $W_m=\bigoplus_{|S|=m}W_S$ and so  $\dim(W_m)=\binom{\ell}{m}(k-1)^m$.
The kernel of the action of $S_k^\ell$ on $W_{S}$ equals $\{(h_1,\ldots,h_\ell) \mid h_i =1$ for $i\in S\}$. Thus, for $R\neq S$, the kernels of the actions of $S_k^\ell$ on $W_S$ and $W_R$  are not equal. Hence the $W_S$ are pairwise nonisomorphic as $FS_k^\ell$-modules and so are the only irreducible $FS_k^\ell$-submodules of $W_m$.
As $S_\ell$ acts transitively on the set of $m$-subsets of $\{1,\ldots,\ell\}$, it follows that $W_m$ is an irreducible $F(S_k\wr S_\ell)$-submodule of $W$. Moreover,  as an $FS_k^\ell$-module, the composition factors of $W_m$   are  $W_S$ for $S\subseteq \{1,\ldots,\ell\}$ and  $|S|=m$. It follows  that 
$W_m$ and $\sum_{n\neq m} W_n$ have no common composition factor, and therefore $W_m \cap \sum_{n\neq m} W_n = 0$. Thus $\sum_{m=0}^\ell W_m = \bigoplus_{m=0}^\ell W_m$ and 
\[
    \dim(W)=k^\ell=(k-1+1)^\ell=\sum_{m=0}^\ell\binom{\ell}{m} (k-1)^m1^{\ell-m}=\sum_{m=0}^\ell \dim(W_m).
\]
Hence $W=\bigoplus_{m=0}^\ell W_m$ and the $W_m$ are the only  irreducible $F(S_k\wr S_\ell)$-submodules of $W$. Note that $\dim(W_1)=\ell(k-1)\leqslant
\dim(W_m)$ for all $m>1$ while $\dim(W_0)=1$. Thus, by Corollary \ref{cor:smallestcodim}, $\lindim_F(S_k\wr S_\ell,\Delta^\ell)=\ell(k-1)$.
\end{proof}

 \section{Actions on sets and partitions}\label{sec:Sn}

In this section, we focus on the symmetric and alternating groups $S_n$ and $A_n$ in some of their natural actions.
Let $\Omega=\{1,\ldots,n\}$ and consider the action of the symmetric group $S_n$ on $\Omega$. This action gives rise to an action on the set $\Omega_k$ of all $k$-subsets of $\Omega$. Note that if $\sigma \subseteq \Omega$ then $G_{\sigma} = G_{\overline{\sigma}}$, so the actions of $S_n$ on $\Omega_k$ and $\Omega_{n-k}$ are equivalent. For this reason, we may and do restrict attention to $1 \leqslant k \leqslant n/2$. Furthermore, if $n =2k$, then the stabiliser of an $\frac{n}{2}$-set is properly contained in the stabiliser of partition into two parts of size $\frac{n}{2}$ and so the actions of $S_n$ and $A_n$ on $\Omega_{\frac{n}{2}}$ are imprimitive. Thus, we only consider $1 \leqslant k < n/2$. Due to the exceptional nature of the permutation module for $S_n$ for small values of $n$, we will often only consider $n\geqslant 10$.

\begin{lemma}
\label{lem: upper bound for S_n on k sets}
Let $G=S_n$, let $F$ be a field and let $1\leqslant k < n/2$. Then $$   \lindim_F(G,\Omega_k)\leqslant n-1.$$
\end{lemma}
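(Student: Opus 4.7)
The plan is to exhibit an explicit representation of the action of $S_n$ on $\Omega_k$ of dimension $n-1$, which will immediately give the desired upper bound. The natural candidate is the quotient of the permutation module $F^n$ by the line of constants $C = \langle \sum_{i=1}^n e_i\rangle$. Since $C$ is an $S_n$-submodule, the natural permutation representation of $S_n$ on $F^n$ descends to a homomorphism $\rho : S_n \to \GL(F^n/C)$, and of course $\dim(F^n/C) = n-1$.

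Define $\varphi : \Omega_k \to F^n/C$ by
\[
    \sigma\varphi = \Bigl(\sum_{i\in\sigma} e_i\Bigr) + C.
\]
The intertwining property is immediate: for $g \in S_n$, we have $\sigma^g\varphi = \sum_{i\in\sigma^g}e_i + C = \bigl(\sum_{i\in\sigma}e_i\bigr)^{g\rho} + C = (\sigma\varphi)^{g\rho}$, because $S_n$ permutes the basis $\{e_i\}$ of $F^n$.

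The only non-trivial step is showing $\varphi$ is injective, and this is where the hypothesis $k < n/2$ is essential. Suppose $\sigma, \tau \in \Omega_k$ with $\sigma\varphi = \tau\varphi$. Then $\sum_{i\in\sigma}e_i - \sum_{i\in\tau}e_i \in C$, which means the coefficients $c_i := \mathbf{1}[i\in\sigma] - \mathbf{1}[i\in\tau] \in \{-1,0,1\}$ all have the same image in $F$. Since $|\sigma\cup\tau| \leqslant 2k < n$, there exists some index $j \notin \sigma\cup\tau$, so $c_j = 0$. Hence every $c_i$ equals $0$ in $F$, which (since $c_i \in \{-1,0,1\}$ as integers) forces $c_i = 0$ as an integer for every $i$ regardless of $\ch(F)$; therefore $\sigma = \tau$. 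Thus $(\rho,\varphi)$ is a genuine representation of the action of $S_n$ on $\Omega_k$ over $F$ of dimension $n-1$, which completes the proof.

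There is no real obstacle here; the short argument just needs the bookkeeping above to handle injectivity uniformly in the characteristic of $F$. The condition $k < n/2$ is used precisely to guarantee an index outside $\sigma \cup \tau$, which would fail for $k = n/2$ since $\sigma$ and $\bar\sigma$ would both be $\tfrac{n}{2}$-sets with the same image in $F^n/C$.
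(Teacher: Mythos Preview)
Your proof is correct and follows the same approach as the paper: both use the quotient $F^n/C$ of the natural permutation module by the constants, with the map $\sigma\mapsto \sum_{i\in\sigma}e_i + C$, and verify injectivity using $k<n/2$. Your injectivity argument (finding an index $j\notin\sigma\cup\tau$ to force the common coefficient to be $0$) is a clean variant of the paper's; one tiny imprecision in your closing remark is that for $k=n/2$ the sets $\sigma$ and $\bar\sigma$ collide in $F^n/C$ only when $\ch(F)=2$, but this does not affect the proof itself.
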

\begin{proof}
    Let $V=F^n$ be the permutation module for $G$ over $F$ with basis $\{e_1,\ldots,e_n\}$ and let $\rho$ be the representation of $G$ on $V$. Let $1\leqslant k < n/2$ and  let $\Omega_k$ be the set of $k$-subsets of $\{1,\ldots,n\}$, as defined above. We define $\varphi : \Omega_k \rightarrow V$ by 
    $$(\sigma)\varphi =\sum_{i\in \sigma} e_i \quad \text{for} \quad \sigma\in \Omega_k.$$
    It is easy to check that $(\rho,\varphi)$ is a representation of the action of $G$ on $\Omega_k$ over $F$.  
     This shows that $\lindim_F(G,\Omega_k) \leqslant n$.
     
     Let $C=\langle e_1+\cdots+e_n\rangle$ be the subspace of `constant' vectors of $V$. For $\sigma,\mu\in\Omega_k$ with $\sigma\neq \mu$, we see that $C+(\sigma)\varphi =C+(\mu)\varphi$ if and only  $n=2k$, $\ch(F)=2$  and $\mu$ is equal to the  complement of $\sigma$. Since $k<n/2$, we have that $\overline{\varphi}$ defined by $(\sigma)\overline{\varphi} = C + \sigma\varphi$ is an injection. Hence, with $\overline{\rho}$ as in \eqref{eq: barrho}, $(\overline{\rho},\overline{\varphi})$ is a representation of the action of $G$ on~$\Omega_k$. This gives   $
    \lindim_F(G,\Omega_k)\leqslant n-1$, as required.
\end{proof}

  We first consider the extreme values of $k$, which are an immediate consequence of Theorem \ref{thm:mods for Sn}. 
 \begin{corollary}
     Let $G=S_n$ or $A_n$ with $n\geqslant 5$. Then $\lindim_F(G,\Omega)=n-1$.
 \end{corollary}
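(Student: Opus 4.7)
The plan is to identify $\Omega$ with $\Omega_1$ (the natural action on $1$-subsets) and then obtain the upper and lower bounds separately. For the upper bound, I would first apply Lemma~\ref{lem: upper bound for S_n on k sets} with $k=1$ to get $\lindim_F(S_n,\Omega) \leqslant n-1$, and then invoke Lemma~\ref{lem: h leq g} together with $A_n \leqslant S_n$ to conclude $\lindim_F(A_n,\Omega) \leqslant n-1$ as well.

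For the lower bound, my plan is to appeal to Corollary~\ref{cor:smallestcodim}. The hypotheses are met since both $A_n$ and $S_n$ act $2$-transitively on $\Omega$ for $n\geqslant 5$ (in particular, primitively and with noncyclic image). So it suffices to determine the smallest codimension of a submodule of the permutation module $F^\Omega$ that is not a hyperplane. By Theorem~\ref{thm:mods for Sn}, the only proper nonzero $FA_n$-submodules of $F^\Omega$ are $C$ and $C^\perp$, and the same list describes the $FS_n$-submodules (since any $FS_n$-submodule is automatically an $FA_n$-submodule, and both $C$ and $C^\perp$ are manifestly preserved by all of $S_n$). Then $C^\perp$ has codimension $1$ (so it is a hyperplane and excluded), while $C$ has codimension $n-1$ and the zero submodule has codimension $n$. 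Hence the minimum non-hyperplane codimension is exactly $n-1$, yielding $\lindim_F(G,\Omega) \geqslant n-1$.

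Combining the two bounds gives the claimed equality in both cases. There is no real obstacle here: the work has already been done in Theorem~\ref{thm:mods for Sn} (which pins down the submodule lattice) and Corollary~\ref{cor:smallestcodim} (which converts that lattice information into the value of the linear dimension).
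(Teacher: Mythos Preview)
Your proposal is correct and follows essentially the same route as the paper: both obtain the upper bound from Lemma~\ref{lem: upper bound for S_n on k sets} and Lemma~\ref{lem: h leq g}, and both derive the lower bound from the submodule structure of $F^\Omega$ given in Theorem~\ref{thm:mods for Sn}. The only cosmetic difference is that you invoke Corollary~\ref{cor:smallestcodim} (using primitivity) to translate the submodule list into the linear dimension, whereas the paper uses the more primitive ingredients Corollary~\ref{cor:isquo} and Lemma~\ref{lem:cyclic} directly; these amount to the same argument.
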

 \begin{proof}
 This was shown in \cite[Proposition 22]{DAlconzo2024} for $S_n$ already, but we give another proof that also works for $A_n$. Note that by Lemma \ref{lem: h leq g} and  Lemma~\ref{lem: upper bound for S_n on k sets}, we have $\lindim_F(G,\Omega)\leqslant n -1$.
 Moreover, Lemma \ref{lem: h leq g} then also implies that if $\lindim_F(A_n,\Omega)=n-1$ then $\lindim_F(S_n,\Omega)=n-1$. Thus from now on we assume that $G=A_n$.
 
 Since $G$ is not cyclic, Lemma \ref{lem:cyclic}  yields $\lindim_F(G,\Omega)>1$. By Theorem~\ref{thm:fund thm}, we see that a witness to $\lindim_F(G,\Omega)$ is a quotient of the permutation module $F^n$.  From Theorem~\ref{thm:mods for Sn}, 
 the possible quotients of~$F^n$ are of dimensions $0,1,n-1$ and $n$. It follows that $\lindim_F(G,\Omega) \geqslant n-1$, and hence equality holds. 
 \end{proof}

Before dealing with $k\geqslant 2$, we need the following lemma.

 \begin{lemma}\label{lem: fix points of an}
     Suppose $F$ is a field, and $n$ and $k$ are integers with $1< k < n-1$. Let $G=A_n$ with $n\geqslant 5$, let $V=F^n$ be the permutation module for $G$ over $F$ and let $C$ be the subspace of constant vectors. If $\sigma \in \Omega_k$ and  $v+C \in V/C$ is fixed by $G_\sigma$, then  there are scalars $\lambda,\mu \in F$ such that $v=\lambda \sum_{i \in \sigma} e_i + \mu \sum_{i \notin \sigma } e_i$. 
 \end{lemma}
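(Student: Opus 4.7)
My plan is to write $v=\sum_{i=1}^n a_i e_i$ and translate the hypothesis that $v+C\in V/C$ is fixed by $G_\sigma = (A_n)_\sigma$ into the condition that for every $g\in G_\sigma$ there exists a scalar $\mu_g\in F$ with $v^g-v=\mu_g\sum_{i=1}^n e_i$. The key observation I will exploit is that if $g$ fixes some point $m\in\Omega$, then reading off the coefficient of $e_m$ in this equation forces $\mu_g=0$, whence $v^g=v$. Thus, it suffices to produce enough elements of $G_\sigma$, each fixing at least one point of $\Omega$, to conclude that $a_i$ is constant on $\sigma$ and constant on $\Omega\setminus\sigma$.

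For indices $i,j\in\sigma$ I will split into two cases according to $k=|\sigma|$. If $k\geqslant 3$, I pick a third $\ell\in\sigma$ and apply the $3$-cycle $g=(i\,j\,\ell)\in A_n\cap G_\sigma$; since $n-k\geqslant 2$, some point of $\Omega\setminus\sigma$ is fixed by $g$, and the fixed observation yields $v^g=v$, which upon reading off coefficients gives $a_i=a_j=a_\ell$. If instead $k=2$, say $\sigma=\{i,j\}$, I choose any two distinct $c,d\notin\sigma$ (available since $n-k\geqslant 3$ as $n\geqslant 5$) and use the double transposition $g=(i\,j)(c\,d)\in A_n\cap G_\sigma$; a further point of $\Omega\setminus(\sigma\cup\{c,d\})$ exists and is fixed by $g$, so again $v^g=v$, forcing $a_i=a_j$.

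The argument for $i,j\in\Omega\setminus\sigma$ is entirely symmetric. When $n-k\geqslant 3$, I apply a $3$-cycle on three points of $\Omega\setminus\sigma$, which is in $A_n\cap G_\sigma$ and fixes all of $\sigma$, hence at least one point since $k\geqslant 2$. When $n-k=2$, so $\Omega\setminus\sigma=\{c,d\}$, I use a double transposition $(a\,b)(c\,d)$ with $a,b\in\sigma$; since $k=n-2\geqslant 3$ for $n\geqslant 5$, there is a point of $\sigma$ fixed by this element, so once more $v^g=v$ and this gives $a_c=a_d$. Combining the two conclusions, $a_i$ is equal to some $\lambda\in F$ on $\sigma$ and equal to some $\mu\in F$ on $\Omega\setminus\sigma$, so $v=\lambda\sum_{i\in\sigma}e_i+\mu\sum_{i\notin\sigma}e_i$, as desired.

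I do not expect a genuine obstacle here: the only thing to watch is the boundary cases $k=2$ and $k=n-2$, where there is no room for a $3$-cycle inside the relevant subset, and one must instead verify that the chosen double transposition truly lies in $A_n\cap G_\sigma$ and still fixes some point of $\Omega$. Both requirements are comfortably secured by the hypotheses $1<k<n-1$ and $n\geqslant 5$.
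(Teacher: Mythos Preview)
Your proof is correct and rests on the same core observation as the paper: if $v-v^g\in C$ and $g$ fixes some coordinate, then $v-v^g=0$. The paper, however, avoids your case split entirely by always taking $g=(i,j)(i',j')$ with $i,j\in\sigma$ and $i',j'\notin\sigma$; since $1<k<n-1$ such $i',j'$ always exist, and since $n\geqslant 5$ there is automatically a fifth coordinate fixed by $g$, forcing $a_i=a_j$ and $a_{i'}=a_{j'}$ simultaneously. So where you treat $k\geqslant 3$, $k=2$, $n-k\geqslant 3$, $n-k=2$ separately and use $3$-cycles when available, the paper dispatches everything with one double transposition. Your argument is perfectly sound, just slightly longer than necessary.
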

 \begin{proof}
       Suppose that $v+C\in V/C$ is fixed by $G_\sigma$. Then for all $g\in G_\sigma$ we have $v^g+C = v +C$, that is, $v-v^g \in C$.
       
       Write $v=\sum_{i=1}^n \alpha_i e_i$. For any $i, j\in \sigma$ with $i\neq j$, pick $i', j'\notin \sigma$ such that $i' \neq j'$. Then $g=(i,j)(i',j') \in G_\sigma$. Hence
       \[
        v-v^g= (\alpha_i-\alpha_j)(e_i -e_j) + (\alpha_{i'}-\alpha_{j'})(e_{i'}-e_{j'}) \in C.
        \]
        Since $n\geqslant 5$, this means $\alpha_i-\alpha_j =0=\alpha_{i'}-\alpha_{j'}$ and so $\alpha_i=\alpha_j$ and $\alpha_{i'}=\alpha_{j'}$. Thus, for all $i\in \sigma$ we have $\alpha_i=\lambda$ and for $i\notin \sigma$ we have $\alpha_i = \mu$, for some  scalars $\lambda,\mu\in F$ and the result follows.
 \end{proof}

\begin{lemma}
\label{lem:p divides n and k}
     Let $G=A_n$ or $S_n$ with $n\geqslant 10$ and let $F$ be a field of characteristic $p>0$.  Suppose that $1 < k<\frac{n}{2}$.
If $p\mid n$ and $p \mid k$, then $\lindim_F(G,\Omega_k)= n-2.$
\end{lemma}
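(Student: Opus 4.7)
The plan is to prove the matching bounds $n-2 \leq \lindim_F(G,\Omega_k) \leq n-2$, exploiting Lemma~\ref{lem: h leq g} to transfer between the two groups: I will establish the upper bound for $S_n$ (which descends to $A_n$) and the lower bound for $A_n$ (which ascends to $S_n$).

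For the upper bound, I will modify the construction in Lemma~\ref{lem: upper bound for S_n on k sets} so that its image lies in the fully deleted permutation module $C^\perp/C$. By Theorem~\ref{thm:mods for Sn}(2), since $p \mid n$ we have $C \subseteq C^\perp$, so $\dim(C^\perp/C)=n-2$. The hypothesis $p \mid k$ is exactly what is needed to define the map: for $\sigma \in \Omega_k$ the vector $\sum_{i\in\sigma} e_i$ has coordinate sum $k \equiv 0 \pmod p$, hence lies in $C^\perp$. Setting $\overline{\varphi}:\sigma \mapsto \sum_{i \in \sigma} e_i + C$, the intertwining property passes to the quotient from the natural permutation action on $F^n$. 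Injectivity reduces to showing that $\sum_{i\in\sigma}e_i - \sum_{i\in\mu}e_i \in C$ forces $\sigma=\mu$: this difference has coordinates $\pm 1$ on the symmetric difference $\sigma \triangle \mu$ and $0$ elsewhere, and since $k<n/2$ forces $\sigma \cup \mu \neq \{1,\ldots,n\}$, at least one zero coordinate is always present, so a constant vector here must be the zero vector. This gives $\lindim_F(S_n,\Omega_k)\leq n-2$, and hence the same bound for $A_n$ by Lemma~\ref{lem: h leq g}.

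For the lower bound, let $V$ be a witness to $\lindim_F(A_n,\Omega_k)$; as noted in the paper, $V$ is a faithful $FA_n$-module. The action of $A_n$ on $\Omega_k$ is primitive (since $n \geq 2k+1$) and $A_n^{\Omega_k}=A_n$ is nonabelian (since $n \geq 10$), so Corollary~\ref{cor:almost irred} applies. If $V$ is irreducible, then Theorem~\ref{thm:mods for Sn}(3) immediately gives $\dim V \geq n-2$. Otherwise, $V$ has a unique nonzero proper submodule $W$, and $W$ is irreducible of codimension one in $V$. I would first rule out the possibility that $W$ is the trivial module: in that case both $W$ and $V/W$ would be trivial, forcing the image of $A_n$ in $\GL(V)$ to lie in the unitriangular subgroup and hence to be abelian, contradicting faithfulness since $A_n$ is nonabelian. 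Since $A_n$ is simple for $n\geq 5$, the nontrivial irreducible module $W$ must be faithful, so Theorem~\ref{thm:mods for Sn}(3) gives $\dim W \geq n-2$, whence $\dim V \geq n-1$. In either case, $\dim V \geq n-2$.

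Combining, Lemma~\ref{lem: h leq g} yields
\[
n-2 \;\leq\; \lindim_F(A_n,\Omega_k) \;\leq\; \lindim_F(S_n,\Omega_k) \;\leq\; n-2,
\]
so equality holds throughout. The main subtlety is in the reducible case of the lower bound: one must verify that the unique proper submodule $W$ is both nontrivial (using the nonabelianness of $A_n$ together with faithfulness of $V$) and faithful (using the simplicity of $A_n$), so that Theorem~\ref{thm:mods for Sn}(3) can be applied to $W$ itself.
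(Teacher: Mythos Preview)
Your proof is correct, and the upper bound matches the paper's argument exactly: both use that $p\mid k$ forces $\sum_{i\in\sigma}e_i\in C^\perp$ and that $k<n/2$ ensures injectivity of the induced map into $C^\perp/C$.

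For the lower bound, the paper takes a shorter route than you do. Rather than invoking Corollary~\ref{cor:almost irred} and analysing the irreducible and reducible cases separately, the paper simply cites Theorem~\ref{thm:mods for Sn}(3). The implicit reasoning is that any witness $V$ is a faithful $FA_n$-module (since $A_n$ acts faithfully on $\Omega_k$), and as $A_n$ is simple nonabelian, some composition factor of $V$ must be nontrivial and hence faithful; Theorem~\ref{thm:mods for Sn}(3) then forces that factor to have dimension at least $n-2$, so $\dim V\geqslant n-2$. Your argument via Corollary~\ref{cor:almost irred} reaches the same conclusion with more structural detail (and in the reducible case even yields the stronger bound $\dim V\geqslant n-1$), but the extra machinery is not needed here.
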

\begin{proof}
Let $F^n$ denote the permutation module for $G$ acting on $n$ points and recall from Lemma~\ref{lem: upper bound for S_n on k sets} that $F^n$ affords a representation $(\rho,\varphi)$ where $\sigma\varphi=  \sum_{i\in \sigma} e_i$.  Since $p\mid k$, we have that $\sigma\varphi \in C^\perp$, and so $C^\perp$ also affords a representation for the action of $G$ on $\Omega_k$.

As seen in the proof of Lemma~\ref{lem: upper bound for S_n on k sets}, since $n\neq 2k$,  we have that $\sigma\varphi + C \neq \mu\varphi+C $ when $\sigma \neq \mu$. Thus, $C^\perp/C$ affords the induced representation $(\overline{\rho},\overline{\varphi})$ where $\sigma\overline{\varphi}=\sigma\varphi+C$. Hence, $\lindim_F(G,\Omega_k)\leqslant n-2$.  By Theorem~\ref{thm:mods for Sn}(3), we have $\lindim_F(A_n,\Omega_k)\geqslant n-2$, and so we are done for $G=A_n$ and then the result follows for $G=A_n$ by Lemma \ref{lem: h leq g}.
    \end{proof}

\begin{lemma}
\label{lem: n-2 no good for omega_k}
    Suppose $p$ is a prime, $F$ is a field of characteristic $p>0$, and $n$ and $k$ are integers with $1 < k < \frac{n}{2}$. If $p\mid n$ and $p\nmid k$, then the fully deleted permutation module $C^\perp/C$ is not a witness to $\lindim_F(A_n,\Omega_k)$. 
\end{lemma}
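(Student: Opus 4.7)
The plan is to argue by contradiction. Suppose that $C^\perp/C$ (with its natural $FA_n$-module structure) affords a representation $(\rho,\varphi)$ of the action of $G=A_n$ on $\Omega_k$. My strategy is to show that the fixed-point subspace of a point stabiliser $G_\sigma$ on $C^\perp/C$ is zero; once this is established, the intertwining property forces $\varphi(\sigma)=0$ for every $\sigma\in\Omega_k$, contradicting injectivity of $\varphi$ since $|\Omega_k|=\binom{n}{k}\geqslant 2$ under our hypotheses.

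The first step is to compute $(V/C)^{G_\sigma}$ for $V=F^n$. Lemma \ref{lem: fix points of an} tells us that any lift of a $G_\sigma$-fixed element of $V/C$ must be of the form $\lambda\sum_{i\in\sigma}e_i+\mu\sum_{i\notin\sigma}e_i$. Since $\sum_{i\in\sigma}e_i+\sum_{i\notin\sigma}e_i=\sum_{i}e_i\in C$, the fixed subspace $(V/C)^{G_\sigma}$ is the one-dimensional line generated by $\sum_{i\in\sigma}e_i+C$.

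The core computation is to determine whether this line survives when we intersect with $C^\perp/C$. A typical lift $\lambda\sum_{i\in\sigma}e_i+\mu\sum_{i\notin\sigma}e_i$ belongs to $C^\perp$ if and only if its coordinate sum $\lambda k+\mu(n-k)$ vanishes. Using $p\mid n$, this reduces to $(\lambda-\mu)k\equiv 0\pmod p$, and since $p\nmid k$ we are forced to have $\lambda=\mu$. But then the lift becomes $\lambda\sum_{i}e_i\in C$, so its class in $V/C$ is zero. Hence $(C^\perp/C)^{G_\sigma}=0$, and the contradiction follows.

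I do not anticipate a serious obstacle: the argument hinges on a clean application of Lemma \ref{lem: fix points of an} combined with an elementary arithmetic reduction using the divisibility hypotheses. The point to highlight is that the interplay between $p\mid n$ and $p\nmid k$ is exactly what eliminates the $G_\sigma$-fixed line from $C^\perp/C$; this is also what distinguishes the present case from the situation in Lemma \ref{lem:p divides n and k}, where the natural map $\sigma\mapsto\sum_{i\in\sigma}e_i$ already lands in $C^\perp$ and makes $C^\perp/C$ work as a witness.
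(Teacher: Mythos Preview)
Your argument is correct and follows essentially the same route as the paper's proof: both apply Lemma~\ref{lem: fix points of an} to conclude that a $G_\sigma$-fixed vector in $C^\perp/C$ lifts to $\lambda\sum_{i\in\sigma}e_i+\mu\sum_{i\notin\sigma}e_i$, then use the coordinate-sum condition together with $p\mid n$ and $p\nmid k$ to force $\lambda=\mu$, whence the lift lies in $C$ and $\sigma\varphi=0$, contradicting injectivity. The only cosmetic difference is that you phrase the conclusion as ``$(C^\perp/C)^{G_\sigma}=0$'' before invoking the intertwining property, whereas the paper works directly with the single vector $\sigma\varphi$.
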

\begin{proof}
     Suppose that $C^\perp/ C$ affords the representation $(\rho,\varphi)$ of the action of $G:=A_n$ on $\Omega_k$  over $F$. Pick $\sigma \in \Omega_k$ and let $\sigma\varphi = v+C \in C^\perp/C$. Hence, $(G\rho)_{v+C}=(G_\sigma)\rho$  and by Lemma~\ref{lem: fix points of an}, we have $v=\lambda\sum_{i\in \sigma}e_i+\mu\sum_{i\notin \sigma}e_i$ for some $\lambda,\mu\in F$.      Since $v\in C^\perp$, we have that $k \lambda + (n-k)\mu =0$. As $p\mid n$, this means that $k\lambda -k\mu=0$. Since $p\nmid k$, we have that $k$ is invertible in $F$ and so $\lambda - \mu =0$. Hence $v=\lambda(e_1+\cdots + e_n) \in C$ and so $v+C = C$.  Thus $\sigma\varphi=C$ for all $\sigma$ and so $\varphi$ is not injective, a contradiction.
\end{proof}

We can now prove Theorem \ref{thm:Snksets} which we restate for convenience.

\noindent\textbf{Theorem \ref{thm:Snksets}.}
  \emph{  Let $G=A_n$ or $S_n$ with $n\geqslant 10$ and let $F$ be a field of characteristic~$p\geqslant 0$.  Let $1\leqslant k< \frac{n}{2}$. Then
    \[
    \lindim_F(G,\Omega_k) =    \begin{cases} n-2 &\textrm{ if }p\mid n \textrm{ and } p\mid k,  \\
  n-1 &\text{ otherwise.}\\
\end{cases}
    \]
} 
 \begin{proof}
 By Lemma~\ref{lem: h leq g}, we have $\lindim_F(A_n,\Omega_k)\leqslant \lindim_F(S_n,\Omega_k)$. Thus, in order to establish lower bounds we will consider only $A_n$, and for upper bounds we will  consider only $S_n$.
 
 First suppose that $p=0$ or $p\nmid n$. From Lemma~\ref{lem: upper bound for S_n on k sets}, we have $\lindim_F(S_n,\Omega_k) \leqslant n-1$. Since $p\nmid n$, we have $\lindim_F(A_n,\Omega_k)\geqslant n-1$ by Theorem~\ref{thm:mods for Sn}. Thus $\lindim_F(G,\Omega_k)=n-1$. 
 
 From now on, we  assume $p\mid n$.
 If $p \mid k$, then Lemma~\ref{lem:p divides n and k} gives  $\lindim_F(G,\Omega_k)=n-2$, as required.
Next suppose that $p\nmid k$. By Theorem \ref{thm:mods for Sn}(3) and Lemma \ref{lem:cyclic} we have that $\lindim_F(A_n,\Omega_k)\geqslant n-2$. Suppose that  equality holds and let $V$ be a witness. Then by Theorem~\ref{thm:mods for Sn}(3) again, $V$ is the fully deleted permutation module for $A_n$. However, this contradicts Lemma~\ref{lem: n-2 no good for omega_k} and so $\lindim_F(A_n,\Omega_k)\geqslant n-1$.
  Since $n\neq 2k$,  Lemma~\ref{lem: upper bound for S_n on k sets}  shows that $\lindim_F(S_n,\Omega_k)\leqslant n-1$ and so the result follows.
 \end{proof}

 Now we look at a case where the linear dimension of an action of $S_n$ is larger than $n$. Let $k,\ell \in \mathbb{N}$ with $k,\ell>1$ and $n=k\ell$. Denote by $U_k(n)$ the set of all partitions of $\{1,\ldots,n\}$ where every part is of size $k$, also called \emph{uniform partitions}. Then the action of $S_n$ on $U_k(n)$ is primitive.
 We will focus on the case where $F=\mathbb{C}$ and compute  $\lindim_\mathbb{C}(S_n,U_k(n))$. 
 Since the irreducible characters of the symmetric group are integer-valued, any $FS_n$-module with $F$ an algebraically closed field of characteristic $p$ can be realised over a field of size $p$ \cite[Proposition 2.10.8]{KL} and so our results can also be applied to fields of characteristic coprime to $|S_n|$.

  By Corollary \ref{cor:smallestcodim}, computing $\lindim_\mathbb{C}(S_n,U_k(n))$ comes down to computing the smallest dimension of an irreducible constituent of the permutation character that is bigger than 1. The permutation character is given by $\xi = 1 \uparrow_{S_{k}\wr S_{\ell}}^{S_n}$. Although we do not know the full decomposition, we know enough about the irreducible constituents to compute the linear dimension. First, we need the following lemma, which we presume is well-known; we include a proof here for the sake of completeness.

\begin{lemma}\label{lem:irreducible constituents permutation character}
    Let $G$ be a group and $K,H$ be subgroups of $G$ with $K \leqslant H$. If $\chi$ is an irreducible character of $G$, then
    \[
        \langle 1\uparrow_K^G,\chi \rangle = 0 \;\Longrightarrow\; \langle 1\uparrow_H^G,\chi \rangle = 0 .
    \]
    In other words, all irreducible constituents of the permutation character $1\uparrow_H^G$ are also irreducible constituents of the permutation character $1\uparrow_K^G$.
\end{lemma}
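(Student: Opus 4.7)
The plan is to translate the statement into a claim about restriction via Frobenius reciprocity, which asserts
\[
\langle 1\uparrow_K^G, \chi \rangle = \langle 1_K, \chi\downarrow_K \rangle \qquad\text{and}\qquad \langle 1\uparrow_H^G, \chi\rangle = \langle 1_H, \chi\downarrow_H\rangle.
\]
Thus the lemma becomes the assertion that if the trivial character of $K$ does not appear in $\chi\downarrow_K$, then the trivial character of $H$ does not appear in $\chi\downarrow_H$. I would argue the contrapositive: suppose $1_H$ is a constituent of $\chi\downarrow_H$, so that we may write $\chi\downarrow_H = 1_H + \psi$ where $\psi$ is an ordinary character of $H$ (possibly zero). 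Using transitivity of restriction, $\chi\downarrow_K = (\chi\downarrow_H)\downarrow_K = 1_K + \psi\downarrow_K$, and since $\psi\downarrow_K$ is a character (or zero) it has a non-negative inner product with $1_K$. Hence $\langle 1_K, \chi\downarrow_K\rangle \geqslant 1$, which gives the contrapositive.

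Equivalently, and perhaps more transparently, one can use transitivity of induction: $1\uparrow_K^G = (1\uparrow_K^H)\uparrow_H^G$, and by Frobenius reciprocity $\langle 1\uparrow_K^H, 1_H\rangle = \langle 1_K, 1_K\rangle = 1$, so $1\uparrow_K^H = 1_H + \eta$ for some character $\eta$ of $H$. Inducing to $G$ yields $1\uparrow_K^G = 1\uparrow_H^G + \eta\uparrow_H^G$, and taking the inner product with any irreducible $\chi$ gives $\langle 1\uparrow_H^G,\chi\rangle \leqslant \langle 1\uparrow_K^G,\chi\rangle$ since $\eta\uparrow_H^G$ is a character (or zero). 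The implication in the statement is then immediate, and as a bonus one obtains the stronger multiplicity comparison. There is essentially no obstacle; the only thing to be careful about is that characters decompose into irreducibles with non-negative integer multiplicities, so that inequalities of the form ``appears as a constituent'' translate to inequalities of inner products.
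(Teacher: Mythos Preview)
Your proposal is correct, and your second argument (via transitivity of induction, writing $1\uparrow_K^H = 1_H + \eta$ and inducing) is essentially the paper's proof: the paper decomposes $1\uparrow_K^H$ into irreducibles, observes $m_{1_H}=1$ by Frobenius reciprocity, and extracts the same conclusion from the non-negativity of the summands. Your first argument via restriction is the Frobenius-dual of the same idea and equally valid; the bonus inequality $\langle 1\uparrow_H^G,\chi\rangle \leqslant \langle 1\uparrow_K^G,\chi\rangle$ you note is a nice strengthening that the paper does not state explicitly.
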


\begin{proof}
    By transitivity of induction, we have $1\uparrow_K^G = 1\uparrow_K^H\uparrow_H^G$. Write
    \[
        1\uparrow_K^H \;= \sum_{\psi \in \text{Irr}(H)} m_\psi \psi
    \]
    where the coefficients $m_\psi$ are non-negative integers. Then we find
    \[
        0= \langle 1\uparrow_K^G,\chi \rangle = \langle 1\uparrow_K^H\uparrow_H^G,\chi \rangle = \sum_{\psi \in \text{Irr}(H)} m_\psi \langle\psi\uparrow_H^G,\chi\rangle.
    \]
    Note that the summands on the right are non-negative. Hence, every summand is equal to 0. Now $\psi =1_H$ is an irreducible character of $H$ and the corresponding summand is given by
    \[
        m_{1_H}\langle 1\uparrow_H^G,\chi\rangle.
    \]
By Frobenius Reciprocity,  $m_{1_H} = \langle 1\uparrow_K^H,1_H \rangle = \langle 1_K,1_K \rangle = 1$, and so we obtain $\langle 1\uparrow_H^G,\chi\rangle = 0$ as desired.
\end{proof}

  The irreducible characters of $S_n$ are labelled by  partitions $\lambda$ of $n$ and we denote them by $\chi^\lambda$.

 \begin{theorem}\label{thm:uniform partitions}
     If $n \geqslant 9$, then we have
     \[
        \lindim_\mathbb{C}(S_n,U_k(n)) = \frac{n^2-3n}{2}.
     \]
 \end{theorem}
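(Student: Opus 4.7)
The plan is to apply Corollary~\ref{cor:smallestcodim}: since $S_n$ is primitive on $U_k(n)$ and is not cyclic, $\lindim_\C(S_n,U_k(n))$ equals the smallest codimension (necessarily $\geqslant 2$) of a submodule of the permutation module $\C^{U_k(n)}$. As $\C^{U_k(n)}$ is semisimple, the problem reduces to identifying the smallest dimension at least $2$ of an irreducible constituent of the permutation character $\xi = 1\uparrow_{S_k\wr S_\ell}^{S_n}$, provided the trivial character is the only $1$-dimensional constituent.

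I would first check this proviso: transitivity gives $\langle 1_H,1_{S_n}\rangle = 1$, while $\langle 1_H,\mathrm{sgn}|_H\rangle = 0$ because $H = S_k\wr S_\ell$ contains transpositions inside a single block (as $k\geqslant 2$), and so $\mathrm{sgn}|_H$ is non-trivial. Next I would exhibit $\chi^{(n-2,2)}$ as a constituent. By Young's rule,
\[
  \pi_2 := 1\uparrow_{S_{n-2}\times S_2}^{S_n} = \chi^{(n)} + \chi^{(n-1,1)} + \chi^{(n-2,2)},
\]
and Frobenius reciprocity identifies $\langle 1_H,\pi_2|_H\rangle$ with the number of $H$-orbits on $2$-subsets of $\{1,\dots,n\}$, which equals $2$ for $k,\ell\geqslant 2$ (within-block and across-block pairs). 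Transitivity of $H$ on $\{1,\dots,n\}$ gives $\langle 1_H,\chi^{(n-1,1)}|_H\rangle = 0$, forcing $\langle 1_H,\chi^{(n-2,2)}|_H\rangle = 1$ and hence the upper bound $\lindim_\C(S_n,U_k(n))\leqslant n(n-3)/2$.

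The main obstacle is the matching lower bound, which requires ruling out irreducible constituents of intermediate dimension. For $n\geqslant 9$ a classical result on low-degree characters of $S_n$ (going back to Rasala; see also James--Kerber) says that the only irreducibles of $S_n$ of degree strictly less than $n(n-3)/2$ are $\chi^{(n)}$, $\chi^{(1^n)}$, $\chi^{(n-1,1)}$ and $\chi^{(2,1^{n-2})}$, of degrees $1,1,n-1,n-1$. Three of these have already been handled, leaving $\chi^{(2,1^{n-2})} = \chi^{(n-1,1)}\otimes\mathrm{sgn}$. Its multiplicity in $\xi$ is $\langle 1_H,\chi^{(n-1,1)}|_H\cdot\mathrm{sgn}|_H\rangle$; using $\chi^{(n-1,1)}(g) = \mathrm{fix}(g)-1$ together with $\langle 1_H,\mathrm{sgn}|_H\rangle = 0$, this simplifies to $\tfrac{1}{|H|}\sum_{i=1}^n\sum_{g\in H_i}\mathrm{sgn}(g)$. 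Each inner sum vanishes, because $H_i\cong S_{k-1}\times(S_k\wr S_{\ell-1})$ contains a transposition lying in a block different from the one containing $i$ (using $k,\ell\geqslant 2$). Combining the three steps with Corollary~\ref{cor:smallestcodim} yields $\lindim_\C(S_n,U_k(n)) = (n^2-3n)/2$, as claimed.
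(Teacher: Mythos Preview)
Your argument is correct and follows the same high-level strategy as the paper: invoke Corollary~\ref{cor:smallestcodim}, use the classification of low-degree irreducible characters of $S_n$ for $n\geqslant 9$, and then eliminate the four small characters. The differences are tactical. To show that $\chi^{(n-2,2)}$ is a constituent and $\chi^{(n-1,1)}$ is not, the paper simply cites \cite[Chapter~12.9]{GodsilMeagher}, whereas you prove both facts directly via the orbit count on $2$-subsets. To eliminate $\chi^{(1^n)}$ and $\chi^{(2,1^{n-2})}$, the paper passes to the Young subgroup $S_k\times\cdots\times S_k\leqslant S_k\wr S_\ell$ and applies Young's rule and the dominance order (Lemma~\ref{lem:irreducible constituents permutation character}); you instead argue with the sign character, observing that each point stabiliser $H_i$ contains a transposition from another block, so the signed fixed-point sum vanishes. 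Your route is more self-contained and avoids the dominance machinery; the paper's route is shorter on the page and gives a uniform reason (namely $(2,1^{n-2}),(1^n)\not\unrhd(k,\ldots,k)$) that would scale to other hook-shaped characters if needed.
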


 \begin{proof}
     In Chapter 12.9 of \cite{GodsilMeagher}, the following statements are shown.
     \begin{enumerate}[(1)]
         \item
            Let $k,\ell > 1$ and $n=k\ell$. The permutation character $\xi = 1 \uparrow_{S_{k}\wr S_{\ell}}^{S_{n}}$ contains the irreducible character $\chi^{(n-2,2)}$ as an irreducible constituent but not the irreducible character $\chi^{(n-1,1)}$.
         
         \item
            Let $n \geqslant 9$ and $\chi^{\lambda}$ be an irreducible character of $S_n$ of dimension less than $(n^2-3n)/2$. Then $\lambda$ is one the following partitions:
            \[
                (n),(1,\ldots,1),(n-1,1),(2,1,\ldots,1).
            \]
     \end{enumerate}
     Using the hook length formula (see \cite[Theorem 3.10.2]{sagan2001}), we find that
     \[
        \dim \chi^{(n-2,2)} = \frac{n^2-3n}{2}.
     \]
     So from (1), we know that $\lindim_\mathbb{C}(S_n,U_k(n)) \leqslant \dim \chi^{(n-2,2)} = (n^2-3n)/2$.
     
     Now we show that the linear dimension cannot be less than $(n^2-3n)/2$. By (2), there are only four irreducible characters of dimension less than $(n^2-3n)/2$. The character $\chi^{(n)}$ is the trivial character and it has dimension 1, so it does not give the linear dimension. The character $\chi^{(n-1,1)}$ is not a constituent of $\xi$ by (1). It remains to show that neither $\chi^{(1,\ldots,1)}$ nor $\chi^{(2,1,\ldots,1)}$ are   irreducible constituents of $\xi$.
     
     We have that $S_k \times \cdots \times S_k$ is a subgroup of $S_k \wr S_\ell$. Hence, by Lemma \ref{lem:irreducible constituents permutation character} every irreducible constituent of $\xi = 1 \uparrow_{S_{k}\wr S_{\ell}}^{S_n}$ is an irreducible constituent of $1 \uparrow_{S_{k} \times \cdots \times S_{k}}^{S_n}$. 
     The decomposition of the latter is known by Young's rule (see \cite[Theorem 2.11.2]{sagan2001}). We have that $\chi^{\lambda}$ is an irreducible constituent of $1 \uparrow_{S_{k} \times \cdots \times S_{k}}^{S_n}$ if and only if $\lambda \unrhd (k,\ldots,k)$ where $\unrhd$ is the dominance order for partitions (see \cite[Section~2.2]{sagan2001} for details). Since $k>1$, we find that $(1,\ldots,1),(2,1,\ldots,1) \not\hspace*{-0.7ex}\unrhd\, (k,\ldots,k)$. Thus,  $\chi^{(1,\ldots,1)}$ and $\chi^{(2,1,\ldots,1)}$ are not irreducible constituents of $1 \uparrow_{S_{k} \times \cdots \times S_{k}}^{S_n}$, and hence not of $\xi$.
     
     So the smallest dimension of an irreducible constituent of $\xi$ that is larger than 1 is $(n^2-3n)/2$ and we obtain $\lindim_\mathbb{C}(S_n,U_k(n)) = (n^2-3n)/2$.
   \end{proof}

\section{Finite 2-transitive groups}

In this section, we prove Theorem \ref{thm:2transLindim}, which determines the linear dimension of 
the finite almost simple 2-transitive groups. We do this in two lemmas. Before presenting these lemmas, we give upper bounds for the linear dimension of the affine groups in natural characteristic, which may be applied to any finite 2-transitive affine group.
See \cite[Sections~7.3 and~7.4]{cameron1999permutation} for the classification of finite 2-transitive groups.

\begin{example}
\label{eg:agldp}
Let $d$ be an integer and $F$ a field. Let $V=F^{d+1}$ considered as row vectors with standard basis $\{e_1,\ldots,e_{d+1}\}$ and let 
$$G = \left \{ \left ( \begin{array}{cc}A & \bf{0} \\ \underline{v} & 1 \end{array} \right ) \mid \underline{v}\in F^d,\ A\in \GL(d,F) \right \} \leqslant \GL(d+1,F)$$
where we view $\bf{0}$ as the $d\times 1$ zero column vector and $\underline{v}\in F^d$ as a $1 \times d$ row vector. Note that $G\cong \AGL(d,F)$ and  $G$ fixes the hyperplane $\langle e_1,\ldots,e_d\rangle$. By direct calculation, we see that $G_{e_{d+1}}\cong \GL(d,F)$. Hence, $(e_{d+1})^G$ is in bijection with $F^d$ and the action of $G$ on this orbit is equivalent to the action of $\AGL(d,F)$ on $F^d$. By Lemma~\ref{lem: rep affords action}, we see
$$\lindim_F(\AGL(d,F),F^d) \leqslant d+1.$$

Suppose  that $|F|=q=p^e$ for some prime $p$ and integer $e$. Since $|\AGL(d,q)|>|\GL(d',q)|$ for any integer $d'\leqslant d$, it is clear that $\lindim_q(\AGL(d,q),\F_q^d)\geqslant d+1$. Hence, using the upper bound above, $\lindim_q(\AGL(d,q),\F_q^d) = d+1$.

Suppose now that $e=rs$ for some integers $r$, $s$, and write $q_0=p^s$. We have a natural embedding $\GL(d,q_0^r) \rightarrow \GL(dr,q_0)$ and an identification $\F_{q_0^r}^d\rightarrow \F_{q_0}^{dr}$. In this way, we see $\AGL(d,q_0^r)$ as a subgroup of $\GL(dr+1,q_0)$. Thus
$$\lindim_{{p^{s}}}(\AGL(d,\F_{p^e}),\F_{p^e}^d) \leqslant dr+1.$$
Now suppose that $t$ is an integer. Then with $s=\gcd(e,t)$ and $r=e/s$, we have that $\F_{p^s}$ is the smallest common subfield of $\F_{p^e}$ and $\F_{p^t}$ and Lemma~\ref{lem: ext field} gives 
$$\lindim_{{p^{t}}}(\AGL(d,\F_{p^e}),\F_{p^e}^d) \leqslant dr+1.$$
By Lemma~\ref{lem: h leq g}, the above inequalities give bounds for all affine groups. \hfill $\diamond$
\end{example}

\begin{lemma}\label{lem:2transNoBTs}
 Let $F$ be a field of characteristic $p\geqslant 0$, and let $G$ be a $2$-transitive group of degree $n$ acting on $\Omega$, where $\soc(G)\neq\Suz(q)$ or $\Ree(q)$. Then either $\lindim_F(G,\Omega)=n-1$, or $\lindim_F(G,\Omega)$ is as in Table~\ref{tab:2transLinDim}.
\end{lemma}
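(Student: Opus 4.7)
The plan is to apply Corollary~\ref{cor:smallestcodim} to each family in the classification \cite[Sections~7.3--7.4]{cameron1999permutation} of almost simple $2$-transitive groups (excluding those with socle $\Suz(q)$ or $\Ree(q)$). Since a $2$-transitive group of degree $n\geqslant 3$ is primitive and non-cyclic, the corollary shows that $\lindim_F(G,\Omega)$ equals the smallest codimension at least $2$ of a submodule of $F^\Omega$; by the self-duality of $F^\Omega$ under the bilinear form in \eqref{F^n:bilinear form}, this equals the smallest dimension at least $2$ of a submodule of $F^\Omega$. When $p=0$ or $p\nmid |G|$, Maschke's theorem together with $2$-transitivity gives $F^\Omega = C\oplus C^\perp$ with $C^\perp$ irreducible of dimension $n-1$, whence $\lindim_F(G,\Omega)=n-1$. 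Hence we may restrict to primes $p\mid |G|$ and analyse the $p$-modular composition factors of $F^\Omega$ in each case.

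The relevant infinite families are: $A_n,S_n$ on $n$ points; $\PSL_d(q)$ on $\PG(d-1,q)$; $\Sp_{2d}(2)$ in its two actions of degree $2^{2d-1}\pm 2^{d-1}$; $\PSL_2(q)$ on the projective line; and $\PSU_3(q)$ on the $q^3+1$ isotropic points. The actions of $A_n,S_n$ on $n$ points are settled by Theorem~\ref{thm:Snksets} with $k=1$. For $\soc(G)=\PSL_d(q)$ in the defining characteristic $p$ of $\F_q$, Steinberg's tensor product theorem identifies the smallest non-trivial composition factor of the permutation module with a Frobenius-twisted tensor product of symmetric powers of the natural module, of dimension $\binom{d+p-2}{d-1}^t$ where $q=p^t$, yielding row~1 of Table~\ref{tab:2transLinDim}; in cross characteristics the deleted permutation module is irreducible and the answer is $n-1$. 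The cases $\Sp_{2d}(2)$ (rows~3--4), $\PSL_2(q)$ (rows~5--6) and $\PSU_3(q)$ (row~8) follow from the known $p$-modular composition structure of their natural permutation modules under the stated arithmetic hypotheses on $q$ and $p$, with Lemma~\ref{lem: ext field} used to handle the field extension condition in row~6.

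The sporadic actions in rows~2 and 12--20, namely $A_7$ on $15$, the Mathieu groups $\mat_{11}$, $\mat_{22}$, $\mat_{23}$, $\mat_{24}$, $\PSL_2(11)$ on $11$, $\HS$ on $176$, and $\Co_3$ on $276$, form a finite list and are verified by reading off the non-trivial composition factor dimensions of $F^\Omega$ from the $p$-modular character tables (the modular Atlas) or by direct \textsc{Magma} computation. The principal technical obstacle is the systematic assembly of $p$-modular representation-theoretic data for the infinite families, in particular the defining-characteristic case for $\PSL_d(q)$, where Steinberg's theorem must be invoked to identify the composition factors and distinguish them from the trivial module $C$; the sporadic rows are then essentially a character-table verification.
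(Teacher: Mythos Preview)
Your overall strategy matches the paper's: reduce via Corollary~\ref{cor:smallestcodim} and self-duality to finding the smallest submodule of $F^\Omega$ of dimension at least $2$, then run through the classification. The gap is that in several places you conflate \emph{composition factor} dimensions with \emph{submodule} dimensions. Corollary~\ref{cor:smallestcodim} requires a genuine submodule, and knowing the modular composition factors of $F^\Omega$ (from Brauer character tables or otherwise) does not by itself tell you which of them occur in the socle. The paper leans on Mortimer \cite{mortimer1980modular} as the organising reference, which determines precisely when the heart $C^\perp/(C\cap C^\perp)$ is reducible and, in many cases, gives the submodule lattice; you should cite it or something equivalent rather than asserting the structure is ``known''.

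The $\PSL_d(q)$ case in defining characteristic is where this bites hardest. Steinberg's tensor product theorem describes the \emph{simple} modules for the algebraic group; it does not by itself identify the submodule lattice of the permutation module on points of $\PG(d-1,q)$. The paper uses \cite{assmus1992designs} to exhibit the design submodule of dimension $\binom{d+p-2}{d-1}^t+1$, and then \cite{bardoe2000permutation} (specifically Theorem~A(c),(d) and Theorem~B) to show that every non-trivial submodule of $C^\perp$ contains a fixed simple $L(1,\ldots,1)$, so that this really is the minimum and does not split further over an extension field. Your sketch does not supply this step. A smaller point: your appeal to Lemma~\ref{lem: ext field} for row~6 is misplaced; that lemma only gives inequalities, whereas what is needed is that the heart is irreducible over $\F_2$ but becomes reducible once $\F_4\leqslant F$, which is a structural fact from \cite{mortimer1980modular} rather than a scalar-extension bound.
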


\begin{proof}
 Since every $2$-transitive group is primitive and not cyclic, Corollary~\ref{cor:quotientPermMod} allows us to find the linear dimension as the codimension of the largest non-hyperplane submodule of the permutation module. As in Section~\ref{sec:witness}, the fact that the permutation module is self-dual means that it suffices to find a submodule $U$ of $F^\Omega$ having minimal dimension such that $\dim(U)>1$. Given a $2$-transitive group $G$, \cite{mortimer1980modular} studies $F^\Omega$ via the irreducibility of the fully deleted permutation module $H=C^\perp/(C\cap C^\perp)$, called the \emph{heart} in \cite{mortimer1980modular}, where $C=\langle e_1+\cdots+e_n\rangle$. First of all, if $p=0$, then $H$ is irreducible, and the result holds. If $p>0$, then conditions regarding the reducibility of $H$ are given in \cite[Table~1]{mortimer1980modular}; in the remainder of the proof we work through the lines of this table where $H$ may be reducible, except for the cases $\soc(G)=\Suz(q)$ or $\Ree(q)$, which are treated in Lemma~\ref{lem:2transNoBTs}.

 Suppose that $\soc(G)=\PSL_d(q)$, $d\geqslant 3$, $q=p^t$, and $\Omega$ is the set of points of $\PG_{d-1}(q)$. We wish to show that $\lindim_{\F_p}(G,\Omega)=\ell$, where $\ell=\binom{d+p-2}{d-1}^t$. First, \cite[Theorem~5.7.1]{assmus1992designs} shows that $\F_p^\Omega$ contains a submodule of dimension $\ell+1$, namely the design submodule $D$ generated by the characteristic vectors of the hyperplanes of $\PG_{d-1}(q)$. Since $D$ contains the constant vectors $C$, and here $p\nmid n$, we have that $D\cap C^\perp$ is a submodule of $\F_p^\Omega$ of dimension $\ell$. 
 It remains for us to show that $D\cap C^\perp$ is the unique minimal submodule and does not split over a larger field, for which we apply the results of \cite{bardoe2000permutation}.
 
 First, \cite[Theorem~A(a)]{bardoe2000permutation} shows that, working over an algebraically closed extension $k$ of $F$, the composition factors of $k^\Omega$ are $C\otimes k$ and a set of simple modules $L(s_0,\ldots,s_{t-1})$ (in the notation of \cite{bardoe2000permutation}), where the parameters $(s_0,\ldots,s_{t-1})$ come from an admissible set $\mathscr{H}$ defined in the hypotheses of \cite[Theorem~A]{bardoe2000permutation}. Note that by \cite[Eq.~(15)]{bardoe2000permutation} and the surrounding discussion, the modules $L(s_0,\ldots,s_{t-1})$ are simple $\SL_d(q)$-modules. By \cite[Theorem~A(c) and (d)]{bardoe2000permutation} and the fact that $(1,\ldots,1)$ is the unique minimal element in the partially ordered set $(\mathscr{H},\leq)$, every non-trivial submodule of $C^\perp\otimes k$ has the submodule $L(1,\ldots,1)$ as a composition factor. By \cite[Corollary~2.1]{bardoe2000permutation}, $L(1,\ldots,1)$ has dimension $\binom{d+p-2}{d-1}^t$. Finally, \cite[Theorem~B]{bardoe2000permutation} and \cite[Section~8]{bardoe2000permutation} show that $D\cap C^\perp$ is a subspace of $L(1,\ldots,1)\oplus (C\otimes k)$, and hence is the required $F\, \PSL_d(q)$-module. Thus line 1 holds.

 For ${\rm A}_7$ acting on $15$ points, and the two actions of $\Sp_{2d}(2)$, we need only consider the case $p=2$. Applying the results of \cite{ivanov1993finite}, line 2 of Table~\ref{tab:2transLinDim} is given by \cite[Theorem~5.1]{ivanov1993finite} and observing that, by \cite[Proposition~5.3.7]{KL}, the minimal degree of a representation of ${\rm A}_7$ in characteristic $2$ is $4$. Furthermore, lines 3 and 4 are given by \cite[Theorem~6.2]{ivanov1993finite}, and the fact that, by \cite[Proposition~5.4.13]{KL}, the minimal degree of a representation of $\Sp_{2d}(2)$ in characteristic $2$ is given by the natural module of dimension $2d$. Note that the isomorphisms $\Sp_4(2)'\cong{\rm A}_6\cong\PSL_2(9)$ mean that the actions of $\Sp_4(2)'$ on $6$ and $10$ points are covered under the treatment of ${\rm A}_6$ and $\PSL_2(9)$, respectively.
 
 The case where $G$ acts $2$-transitively on $q+1$ points and $\soc(G)=\PSL_2(q)$ is treated in \cite[Section~3(F)]{mortimer1980modular}. If $G$ is $2$-transitive but not $3$-transitive, then $H$ is reducible when $p=2$ and $q\equiv\pm 1{\pmod 8}$, or when $F$ contains $\F_4$ and $q\equiv\pm 3{\pmod 8}$, giving lines~5 and~6 of Table~\ref{tab:2transLinDim}. 

 If $\soc(G)=\PSU_3(q)$, with $G$ acting on $q^3+1$ points, then \cite[Section~G]{mortimer1980modular} shows that $H$ is reducible if and only if $p$ divides $q+1$. Line 8 then follows from \cite[Theorem~4.1]{hiss2004hermitian}.

 Finally, we consider the sporadic groups for which $H$ is reducible. Lines 12, 13, 14, 17 and 19 are given by \cite[Section~8]{ivanov1993finite}. Lines 15 and 16 are given by \cite[Section~3(J)]{mortimer1980modular}. Lines 18 and 20 were confirmed by calculations in \textsc{Magma}.
\end{proof}

\begin{lemma}\label{lem:2transBTs}
 Let $F$ be a field of characteristic $p\geqslant 0$, and let $G$ be a $2$-transitive group of degree $n$ acting on $\Omega$, where $\soc(G)=\Suz(q)$ or $\Ree(q)$. Then either $\lindim_F(G,\Omega)=n-1$, or $\lindim_F(G,\Omega)$ is as in Table~\ref{tab:2transLinDim}.
\end{lemma}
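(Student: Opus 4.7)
The plan is to follow the strategy from the proof of Lemma~\ref{lem:2transNoBTs}. By Corollary~\ref{cor:smallestcodim} combined with the self-duality of the permutation module observed in the remark following that corollary, $\lindim_F(G,\Omega)$ equals the smallest dimension of a submodule of $F^\Omega$ that is strictly greater than $1$-dimensional. If $p=0$ or $p\nmid|G|$, then Maschke's theorem makes $F^\Omega$ semisimple, the heart $H=C^\perp/(C\cap C^\perp)$ is irreducible, and the only submodules of $F^\Omega$ are $0,C,C^\perp$ and $F^\Omega$; the minimal non-line submodule is then $C^\perp$ of dimension $n-1$. So we may assume $p\mid|G|$ from now on.

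Next we consult \cite[Table~1]{mortimer1980modular} to determine precisely when the heart is reducible for the relevant $2$-transitive actions. For $\soc(G)=\Suz(q)$, the heart is irreducible unless $p$ is odd and $p\mid q+1+m$ with $m^2=2q$; for $\soc(G)=\Ree(q)$, it is irreducible unless either $p=2$, or $p$ is odd with $p\mid q+1$, or $p$ is odd with $p\mid q+m+1$ where $m^2=3q$. These four exceptional sub-cases correspond exactly to lines $7$, $9$, $10$, and $11$ of Table~\ref{tab:2transLinDim}. When $H$ is irreducible, the submodule lattice of $F^\Omega$ is again exhausted by $0, C, C^\perp, F^\Omega$, and the minimal non-line submodule is $C^\perp$ of dimension $n-1$, as required.

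In each of the four exceptional sub-cases we first check that $p\mid n$: for $\Suz(q)$, use $n=q^2+1=(q+1+m)(q+1-m)$; for $\Ree(q)$ with $p=2$, note that $q=3^{2k+1}$ is odd, whence $n=q^3+1$ is even; for $\Ree(q)$ with $p\mid q+1$, use $n=(q+1)(q^2-q+1)$; and for $\Ree(q)$ with $p\mid q+m+1$, use the factorisation $q^2-q+1=(q+m+1)(q-m+1)$. Hence $C\subset C^\perp$ in every exceptional case, and submodules of $F^\Omega$ lying strictly between $C$ and $C^\perp$ correspond bijectively with non-zero proper submodules of $H$. Using the detailed modular submodule structure of $F^\Omega$ described in \cite[Section~3]{mortimer1980modular} together with the Brauer character tables of $\Suz(q)$ (Burkhardt) and of $\Ree(q)$ (Landrock--Michler, and further work of Hiss mentioned in the acknowledgements), we identify a minimal non-zero submodule $U$ of the heart $H$ in each sub-case; the minimal non-line submodule of $F^\Omega$ is then the preimage of $U$ in $C^\perp$, of dimension $\dim U+1$.

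Specifically, we expect $\dim U=m(q-1)/2$ in line~$7$; $\dim U=q^2-q$ in line~$9$; $\dim U=m(q^2-1)/6+q(q-1)/2$ in line~$10$; and $\dim U=m(q^2-1)/3$ in line~$11$. Adding $1$ for the contribution of $C$ recovers the listed values. The main obstacle is extracting these exact submodule dimensions from the modular representation theory of $\Suz(q)$ and $\Ree(q)$, together with the auxiliary verification that no non-line submodule of $F^\Omega$ lying outside the chain $0\leqslant C\leqslant C^\perp\leqslant F^\Omega$ is smaller (using that $C$ is the unique minimal submodule of $F^\Omega$, so any $M$ with $\dim M\geqslant 2$ satisfies $M\cap C^\perp\supseteq C$, and $F^\Omega/C^\perp$ is the trivial module). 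This detailed analysis must be drawn from the cited literature rather than derived from scratch.
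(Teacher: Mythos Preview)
Your overall strategy matches the paper's: reduce via Corollary~\ref{cor:smallestcodim} and self-duality to finding the smallest submodule of $F^\Omega$ of dimension greater than~$1$, use \cite{mortimer1980modular} to isolate the cases where the heart $H$ is reducible, and then read off the minimal submodule dimension. However, there is a genuine gap in the execution. Brauer character tables give you the dimensions of the \emph{composition factors} of $H$, not the submodule lattice; knowing the composition factors does not by itself identify a minimal submodule or its dimension. The paper supplies the missing ingredient: in each of the reducible cases the relevant prime $p$ does not divide the order of a point stabiliser, so $F^\Omega$ is a projective $FG$-module, indeed the projective cover of the trivial module. The structure of such a module is then governed by the (planar) Brauer tree of the principal $p$-block, which here has cyclic defect and is a star; this forces $F^\Omega$ to be \emph{uniserial}, so the submodule lattice is a chain and the minimal non-line submodule is immediately identified as the second term in the composition series. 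Your assertion that ``$C$ is the unique minimal submodule of $F^\Omega$'' is exactly the statement that the socle of $F^\Omega$ is trivial, and this is what uniseriality delivers; without it the lattice argument you sketch at the end is incomplete.

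Two further points. First, the paper notes and corrects an error in Mortimer's analysis for $\Suz(q)$ (\cite[Section~3(E), p.~12]{mortimer1980modular}), so citing that source uncritically for the submodule structure would propagate the mistake. Second, the statement concerns almost simple $G$, not just the simple socle; the paper handles this by observing that uniseriality of $F^\Omega$ as a $\soc(G)$-module forces any automorphism of $\soc(G)$ to act trivially on the submodule lattice, so the same submodules work for the overgroup. Your proposal does not address this extension.
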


\begin{proof}
 We proceed in a similar manner to the proof of Lemma~\ref{lem:2transNoBTs}, by analysing the structure of the heart $H$ of the permutation module $F^\Omega$. However, for this proof we appeal to the theory of Brauer trees for cyclic defect groups; see, for example, \cite[Section~17]{alperin1993local}. The vertices of a Brauer tree for a group $G$ are labelled by some irreducible ordinary characters of $G$, and the edges are labelled either by irreducible Brauer characters or their corresponding simple modules, depending on the application. Labelling edges by Brauer characters allows one to determine relations among Brauer characters while, more importantly for us, the structure of a \emph{planar} Brauer tree and the simple modules labelling the edges allows one to determine the structure of certain modules. 
 In particular, given a planar Brauer tree that is a star with an edge labelled by a simple module $S$, the projective lift $P_S$ of $S$ is uniserial  
 (that is, the submodules of $P_S$ form a totally ordered chain) with both the first and last composition factors of $P_S$ being isomorphic to $S$, and with the remaining composition factors, and the order in which they appear, being determined by the ordering of the relevant edges around the central vertex of the Brauer tree.
 Note that in the remainder of the proof we assume that $G$ is simple. As we will see, in each case $F^\Omega$ turns out to be uniserial, and hence any automorphism of $G$ acts trivially on the submodule lattice, and thus the result continues to hold in the case that $G$ is almost simple.

 Let $G=\Suz(q)$ with $q=2^{2d+1}$, and let $m^2=2q$. Since $q^2+1=(q+m+1)(q-m+1)$, with the two factors here being coprime, this gives the two cases $p\mid (q+m+1)$ and $p\mid (q-m+1)$. In \cite[Section~3(E)]{mortimer1980modular} it is shown that $H$ is irreducible in the case where $p\mid (q-m+1)$, so from now on we assume that $p\mid (q+m+1)$. In this case, \cite[Case~1, p.11]{mortimer1980modular} shows that $H$ is reducible, though there is a small mistake in the analysis on page 12 of \cite{mortimer1980modular}. In particular, the phrase `$\U\leqslant \M$ or $\U^\perp\leqslant M$' should read `$\U\leqslant \M^\perp$ or $\U\geqslant \M$', and there is also a case missing from the analysis in the last paragraph of Section~3(E). To complete the analysis, we apply the theory of Brauer trees for cyclic defect groups (see, for example, \cite[Section~17]{alperin1993local}). The Brauer tree for this case is correctly given in \cite[Case~1, p.11]{mortimer1980modular} (with vertices labelled by the characters $1$, $\omega_1$, $\omega_2$ and $\eta_j$), and it is also pointed out that the defect group is cyclic. The fact that $p$ divides $q+m+1$ implies that $p$ does not divide the order of the point-stabiliser of $G$,
 which consequently means that $F^\Omega$ is a projective module (see the first paragraph of the proof of \cite[Lemma 3]{Neumann1972}). Indeed, since $F^\Omega$ is indecomposable with one-dimensional head, $F^\Omega$ is the projective lift of the trivial module. Following \cite[Section~17]{alperin1993local}, we can deduce the structure of $F^\Omega$ as the projective lift of the edge adjacent to the character $1$ in the Brauer tree (\cite[Case~1, p.~11]{mortimer1980modular}). Since the Brauer tree is a star, we immediately see that $F^\Omega$ is uniserial. Moreover, the degrees $m(q-1)/2$, $(q-1)(q-m+1)$ and $m(q-1)/2$ of $\omega_1$, $\eta_j$ and $\omega_2$, which sum to $q^2-1=\dim(H)$, correspond to the dimensions of the composition factors of $H$, in ascending order (corresponding to a clockwise rotation around the vertex $\alpha$ in the Brauer tree). Thus, $\lindim_p(G,\Omega)=m(q-1)/2+1$, and line 7 of Table~\ref{tab:2transLinDim} holds.

 Suppose that $G=\Ree(q)$ and $G$ acts on $q^3+1$ points, with $q=3^{2d+1}$, and let $m^2=3q$. First, if $p=2$ then \cite[Prop.~3.8 and Thm.~3.9]{landrock1980principal} gives that the linear dimension is $q^2-q+1$, so that line~9 holds. Assuming now that $p\neq 2$, we
 need to consider, by \cite[Section~3(H)]{mortimer1980modular},
 the cases where $p$ divides either $q+1$, $q+m+1$ or $q-m+1$.
 Note that as both $m$ and $q$ are non-trivial powers of $3$, we have that $3$ does not divide any of the factors $q+1$, $q+m+1$ or $q-m+1$.
 Thus $p\neq2,3$ and, by \cite[Section~2.1]{hiss1991brauer}, the defect group is indeed cyclic in each case, so that the Brauer trees are given in \cite[Theorems~4.2--4.4]{hiss1991brauer}. Since $p\neq2,3$ and $p$ divides $q^3+1$, it follows that $p$ does not divide the order of the point-stabiliser of $G$. Thus $F^\Omega$ is projective and, in particular, is the projective lift of the trivial module. Thus, we may find the structure of $F^\Omega$ by analysing the edge adjacent to the trivial character $\xi_1$ in the relevant Brauer tree. First, if $p\mid q+1$ then we need to investigate the tree in \cite[Theorem~4.2(a)]{hiss1991brauer}. The permutation module for this case is investigated more closely in \cite[p.~59]{hiss1997incidence}. In particular, it is shown that $F^\Omega$ is uniserial and $H$ has composition factors of dimensions $(mq^2-m)/6+(q^2-q)/2$, $(q-1)(q^2-mq/3-m/3+1)$ and $(mq^2-m)/6+(q^2-q)/2$, in ascending order. Thus line~10 holds. If $q$ divides $q+m+1$ then the relevant planar embedded Brauer tree is given in \cite[Theorem~4.3]{hiss1991brauer} (note that there is a typo in \cite[Theorem~4.3]{hiss1991brauer}; the character $\xi_7$ should instead be $\xi_6$, see the proof of Theorem~4.3 on pages 882--883 of \cite{hiss1991brauer}). Since the Brauer tree is a star, it follows that $F^\Omega$ is uniserial. 
 As $F^\Omega$ is self-dual, we may assume that $S_1,S_2,S_3,S_2^*,S_1^*$ are the composition factors of $H$, in ascending order, with Brauer characters given by the restriction to $p$-regular classes of $\xi_4,\xi_5,\chi_t,\xi_6,\xi_3$, respectively, where $S_1^*,S_2^*$ are the duals of $S_1,S_2$ and $S_3$ is self-dual. Thus,
 \begin{align*}
  \dim(S_1)&=\dim(S_1^*)\leqslant \frac{m}{3}(q^2-1),\\
  \dim(S_2)&=\dim(S_2^*)\leqslant \frac{m}{6}(q-1)(q-m+1),\,\,\text{and},\\
  \dim(S_3)&\leqslant (q^2-1)(q-m+1).
 \end{align*}
 The fact that $\dim(H)=q^3-1$ implies that the three inequalities above are in fact equalities.
 Thus the linear dimension is $m(q^2-1)/3+1$, proving line 11. For the remaining case, where $p$ divides $q-m+1$, the Brauer tree of interest is given in \cite[Theorem~4.3]{hiss1991brauer}, and this shows that $H$ is irreducible. This completes the proof.
\end{proof}
 
\section{Some open problems}\label{sec:problems}

We end the paper with  open problems and further directions of research.

Our results have highlighted the importance of determining the linear dimensions of primitive permutation groups and in particular those that are not contained in a wreath product in product action. The remaining primitive permutation groups fall into three classes: affine, diagonal type and almost simple. We determined $\lindim_F(\AGL(d,F),F^d)$ in Example \ref{eg:agldp} and looked at some almost simple groups in Section \ref{sec:Sn} and Theorem \ref{thm:2transLindim}. Other examples of primitive almost simple groups are the symmetric and alternating groups in their `standard' actions: on partitions and on subsets. The linear dimension of the latter action is determined in Theorem~\ref{thm:Snksets}. This suggests the following natural problems.

\begin{problem}
 Determine the linear dimension of primitive groups $G<\AGL(d,q)$ for all fields.
\end{problem}

\begin{problem}
   Determine the linear dimension of diagonal type groups. 
\end{problem}

\begin{problem}
    Determine the linear dimension of primitive actions of almost simple groups, for example for the action of $\PSL_n(q)$ on the set of $k$-dimensional subspaces of an $n$-dimensional vector space with $1<k<n-1$.
\end{problem}

We saw in Theorem \ref{thm:Snksets} that for $n\geqslant 10$, the linear dimension of $S_n$ acting on $k$-sets is either $n-1$ or $n-2$. We suspect that it  holds for $n\geqslant 5$. We also saw in Theorem \ref{thm:uniform partitions} that for the action on uniform partitions the linear dimension over $\mathbb{C}$ is $(n^2-3n)/2$. This suggests the natural question.

\begin{problem}
Let $S_n$ act primitively on a set $\Omega$ such that $\Omega$ is not the set of $k$-subsets of $\{1,2\ldots,n\}$. Is it the case that $\lindim_F(S_n,\Omega)\geqslant (n^2-3n)/2$?
\end{problem}

For primitive groups $G$, we saw in Corollary~\ref{cor:almost irred} that a witness is either irreducible, or, has a unique codimension one submodule, which is irreducible.  We are not aware of any conditions on either the group $G$ or the field $F$ to guarantee that a witness is irreducible. However, if a witness is not irreducible, this has implications for $1$-cohomology:  there exists a module $V$ for $G$ such that $H^1(G,V)\neq 0$, see \cite[(17.11)]{Aschbacher}. This raises:
\begin{problem}
What are necessary and sufficient conditions for a witness to be irreducible?    
\end{problem}

For imprimitive groups $G \leqslant S_k \wr S_\ell$, we saw in Example~\ref{eg:cp wr sd} that a witness can have more than two composition factors. In the example given, the composition factors are all composition factors of the permutation module for $S_\ell$ on $\ell$ points -- and are thus restricted. We are therefore motivated to pose the following:
\begin{problem}
    Is there an absolute bound on the number of composition factors of a witness~$V$ to $\lindim_F(G,\Omega)$ for transitive groups $G$?
\end{problem}

There is no absolute bound on the number of composition factors of witnesses  for intransitive groups. Let $N\geqslant 3$ be a natural number.  For each $k$, let $S_k$ act naturally on the set $I_k$ of size $k$.
Then the direct product $G=\prod_{k=3}^{N+2} S_k $ acts intransitively on the disjoint union   $\Omega = \dot\bigcup_{k=3}^{N+2}I_k$ of size $\sum_{k=3}^{N+2} k$. By Theorem \ref{thm:intrans=}, a witness $V$ for $\lindim_F(G,\Omega)$  must be the direct sum $V=\bigoplus_{k=3}^{N+2} V_k$ where each $V_k$ is a witness to $\lindim_F(S_k,\Omega_k)$. In particular, we see that $V$ has at least $N$ composition factors.

If $F$ is an extension of the field $L$ then Lemma \ref{lem: ext field} gives bounds for $\lindim_L(G,\Omega)$ in terms of $\lindim_F(G,\Omega)$.  It is desirable to understand this relationship.

\begin{problem}
    If $F$ is an extension of the field $L$, give an exact formula for $\lindim_F(G,\Omega)$ in terms of $\lindim_L(G,\Omega)$.
\end{problem}

In \cite[Proposition 23]{DAlconzo2024} it is shown that there is a homomorphism $\rho:S_{2^n} \rightarrow \AGL(2^n-2,2)$ and an injection $\iota:\{1,2,\ldots,2^n\} \rightarrow \F_2^{2^n-2}$ such that \begin{equation}\label{intertwin2}
(\omega^g)\iota= \left(\omega\iota\right)^{g\rho}.
\end{equation} 
One could define the \emph{affine dimension} of a group action $G$ on $\Omega$ over a field $F$ to be the smallest integer $d$ such that there is a homomorphism $\rho:G\rightarrow \AGL(d,F)$ and injection $\iota:\Omega\rightarrow F^d$ such that (\ref{intertwin2}) holds.

\begin{problem}
 Develop a theory for the affine dimension of a group action.   
\end{problem}

A similar possibility is to develop a theory for the \emph{projective dimension} of a group action. This would be the smallest $d$ such that there is a homomorphism $\rho:G\rightarrow \PGL_d(F)$ and an injection $\iota$ from $\Omega$ to the set of 1-dimensional subspaces of $F^d$ such that 
(\ref{intertwin2}) holds.


\end{document}